\newtheorem{theorem}{Theorem}
\numberwithin{theorem}{section}
\newtheorem{proposition}[theorem]{Proposition}
\newtheorem{lemma}[theorem]{Lemma}
\newtheorem{corollary}[theorem]{Corollary}
\newtheorem{definition}[theorem]{Definition}
\newtheorem{remark}[theorem]{Remark}
\newtheorem{example}[theorem]{Example}
\newtheorem{conjecture}[theorem]{Conjecture}
\newcommand{\PP}{\mathbb{P}}
\newcommand{\RR}{\mathbb{R}}
\newcommand{\QQ}{\mathbb{Q}}
\newcommand{\CC}{\mathbb{C}}
\newcommand{\ZZ}{\mathbb{Z}}
\newcommand{\SymRR}{\mathcal{S}}
\newcommand{\id}{I}
\DeclareMathOperator{\rank}{rank}
\DeclareMathOperator{\trace}{trace}
\DeclareMathOperator{\codim}{codim}
\DeclareMathOperator*{\argmax}{arg\,max}
\newcommand{\bigA}{\mathcal{A}}
\newcommand{\bigC}{\mathcal{C}}
\date{}
\title{\textbf{The Geometry of SDP-Exactness
\\ in Quadratic Optimization}}
\author{Diego Cifuentes, Corey Harris and Bernd Sturmfels}
\begin{document}

\maketitle

  \begin{abstract} \noindent  Consider the problem of
  minimizing a quadratic objective subject to quadratic \mbox{equations}. We study the
  semialgebraic region of objective functions for which this problem is solved by its
  semidefinite relaxation. For the Euclidean distance problem, this is a
    bundle of spectrahedral shadows surrounding the given variety.
    We characterize the algebraic boundary of this region
   and we derive a formula for its degree.
   \end{abstract}

\section{Introduction}

We study a family of \emph{quadratic optimization} problems with varying cost function:
\begin{equation}
\label{eq:qp1}
\min_{x \in \RR^n} \,\, g(x) \quad \hbox{subject to} \quad
f_1(x) = f_2(x) = \cdots = f_m(x) = 0,
\end{equation}
where ${\bf f} = (f_1,\ldots,f_m)$ is a fixed tuple
of elements in  the space $\RR[x]_{\leq 2} \simeq \RR^{\binom{n+2}{2}}$ of
polynomials of degree two in $x = (x_1,\ldots,x_n)$.
The problem~\eqref{eq:qp1} is hard, but {semidefinite programming} (SDP) offers a tractable approach.
Indeed, there is a hierarchy of SDP relaxations of~\eqref{eq:qp1}; see, e.g.,~\cite{BPT,Lasserre2010,Lasserre2001}.
In this paper we focus on the first and simplest of these relaxations, also known as Shor relaxation~\cite{Shor1987}.
We are interested in the set defined by the Shor relaxation:
\begin{equation*}
\mathcal{R}_{\bf f} \,\, = \,\,
\bigl\{\, \,g \in \RR[x]_{\leq 2} \,: \, \text{the problem \eqref{eq:qp1} is solved exactly
by its SDP relaxation} \,\bigr\}.
\end{equation*}
We call $\mathcal{R}_{\bf f}$ the \emph{SDP-exact region} of the tuple
$\,{\bf f} = (f_1,\ldots,f_m)$.
We will slightly change this definition in \Cref{s:3} by further imposing strict complementarity.
 This will lead to an explicit description of~$\mathcal{R}_{\bf f}$ as a semialgebraic set in $\RR[x]_{\leq 2}\simeq \RR^{\binom{n+2}{2}}$.
We refer to \Cref{def:rank1sol2}.

The quadratic cost function that motivated this article is the squared distance
to a given point $u \in \RR^n$. In symbols, $\,g_u(x) = || x - u ||^2$.
Here \eqref{eq:qp1} is the \emph{Euclidean Distance} (ED) problem (cf.~\cite{DHOST}) for the variety
$\,V_{\bf f} = \{ x \in \RR^n \,: \,f_1(x) = \cdots = f_m(x) = 0 \}$.
By restricting $\mathcal{R}_{\bf f}$ to the space of cost functions $g_u$,
we obtain a semialgebraic set in $\RR^n$.
This is the SDP-exact region for the ED~problem, denoted $\mathcal{R}_{\bf f}^{\it ed}$, which was investigated in~\cite{CAPT}.

\begin{figure}[htb]
 \centering
 \includegraphics[height=180pt]{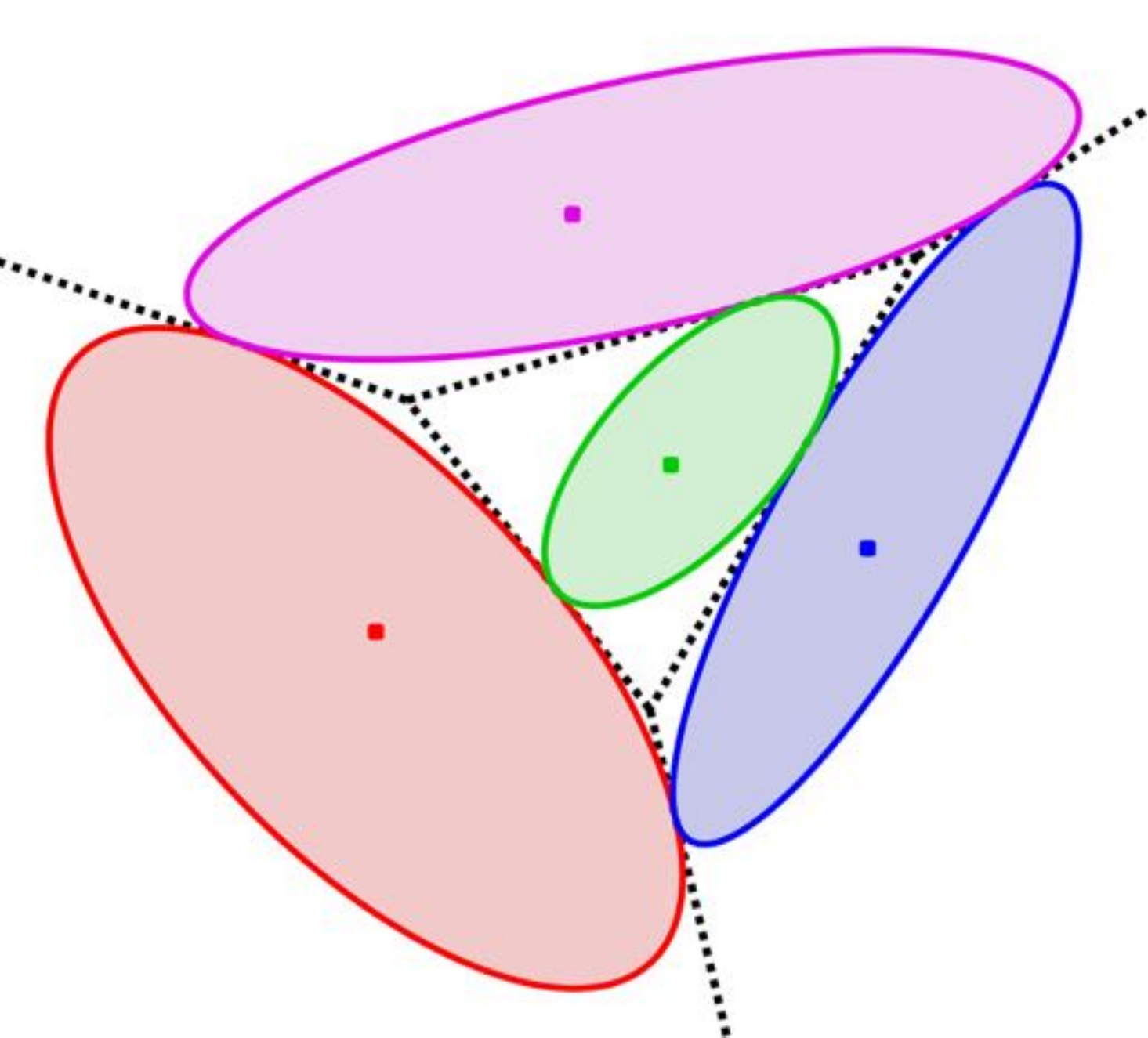}\hspace{10pt} \quad \qquad %
 \includegraphics[height=180pt]{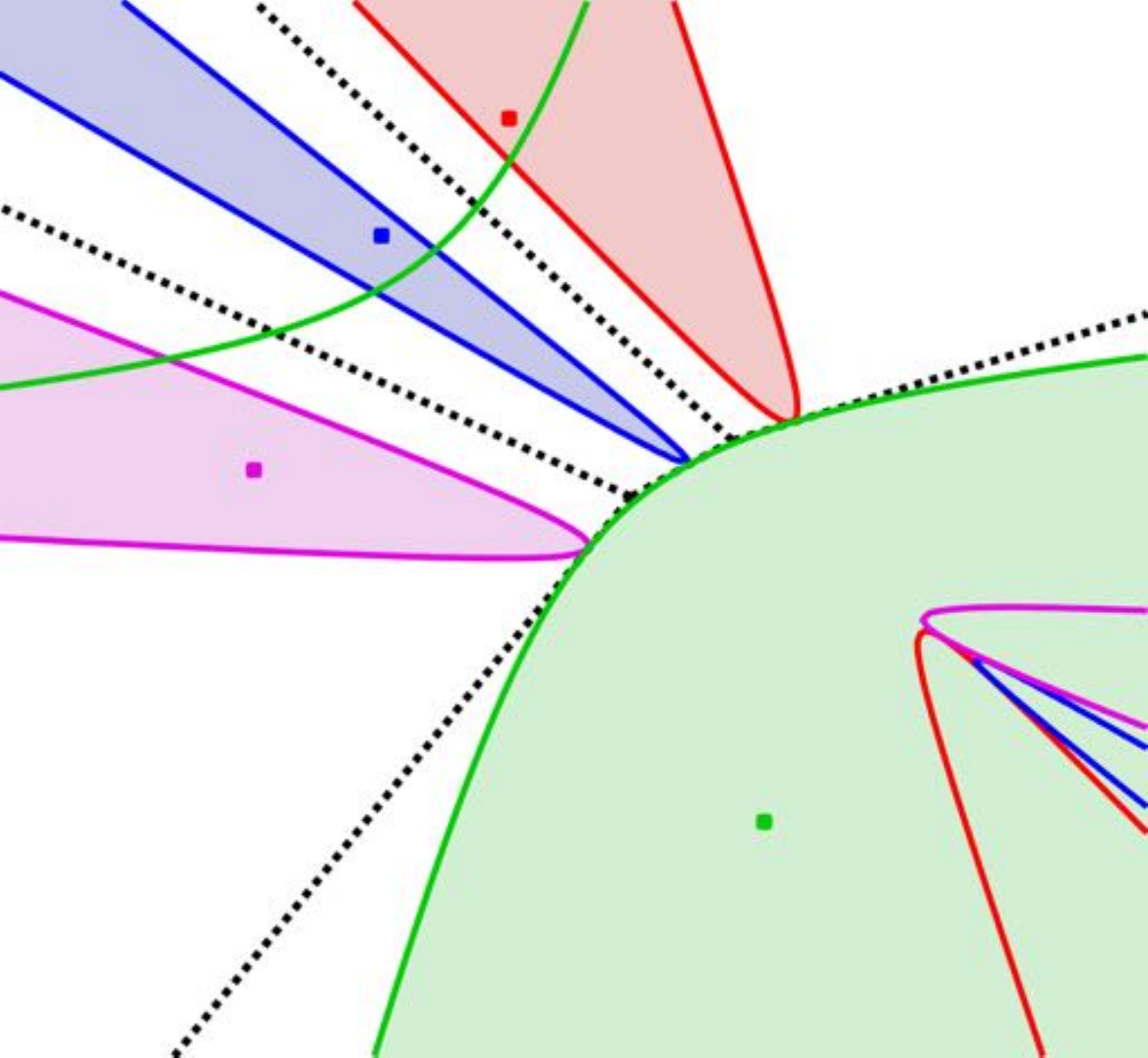}
 \caption{The variety of two  quadratic equations in $\RR^2$ consists of four points.
  The SDP-exact region for the ED~problem consists of conics that are inscribed in the Voronoi cells.
  The conics can be ellipses (left) or hyperbolas (right) depending on the point configuration.
  \label{fig:voronoi2d}}
\end{figure}

\begin{example}[ED~problem for $m=n=2$] \label{ex:voronoi2d} \rm
The variety $V_{\bf f}$ consists of four points in~$\RR^2$.
We seek the point in $V_{\bf f}$ that is closest to a given point $u = (u_1,u_2)$.
The {\em Voronoi decomposition} of $\RR^2$ characterizes the solution.
The SDP-exact region $\mathcal{R}^{ed}_{\bf f}$ consists of four disjoint convex sets, one for each point in~$V_{\bf f}$.
The convex sets are bounded by conics, and are contained in the Voronoi cells of the points.
\Cref{fig:voronoi2d} illustrates $\mathcal{R}^{ed}_{\bf f}$ for two configuration of points in~$\RR^2$: the cells on the left are bounded by ellipses, and on the right by hyperbolas.
Note that in both cases the conics touch pairwise at the bisector lines (cf.~\Cref{thm:finitevariety}).
\end{example}

Our second example is the Max-Cut Problem from discrete optimization.
The SDP relaxation of this problem has been the subject of several papers; see, e.g.,~\cite{Goemans1995,Laurent1995,Helmberg2000}.

\begin{example}[Max-Cut Problem] \rm \label{ex:maxcut}
Let $m=n$ and $f_i(x) = x_i^2-1$, so
$V_{\bf f} = \{-1,+1\}^n$ is the vertex set of the $n$-cube.
We seek a maximal cut in the complete graph $K_n$
where the edge $\{i,j\}$ has weight $c_{ij}$.
In \eqref{eq:qp1} we take $g(x) = \sum_{i,j} c_{ij} x_i x_j$
where $C= (c_{ij})$ is a symmetric~$n {\times} n$ matrix with
$c_{11} = \cdots = c_{nn} = 0$.
Note that these objective functions live in a subspace of dimension ${\binom{n}{2}}$
in $\RR[x]_{\leq 2}$. The dual solution in the SDP relaxation is the {\em  Laplacian}
$$
    \mathcal{L}(C) \,\,= \,\,
\text{ \begin{small}\(
    \left(\begin{matrix}  \,
      -\textstyle\sum_{j\neq 1} c_{1j} \! & c_{12} & c_{13} & \cdots & c_{1n}\\
      c_{12} & \! -\textstyle\sum_{j\neq 2} c_{2j} & c_{23} & \cdots & c_{2n}\\
      c_{13} & c_{23} & \! -\textstyle\sum_{j\neq 3} c_{2j} & \cdots & c_{3n}\\
      \vdots & \vdots & \vdots & \ddots & \vdots\\
      c_{1n} & c_{2n} & c_{3n} & \ldots & \! -\textstyle\sum_{j\neq n} c_{jn} \,\,
    \end{matrix} \right).
\)\end{small} }
$$
The SDP-exact region $\mathcal{R}_{\bf f}$ consists of $2^{n-1}$ spectrahedral cones in $\RR^{\binom{n}{2}}$,
each isomorphic to the set of matrices $C = (c_{ij})$ such that $\mathcal{L}(C)$ is positive semidefinite.
The boundary of this spectrahedron is given by a polynomial of degree $n{-}1$,
namely the determinant of any $(n{-}1) \times (n{-}1)$  principal minor of $\mathcal{L}(C)$.
By the {\em Matrix Tree Theorem}, the expansion of this determinant is the sum
of $n^{n-2}$ monomials in the $c_{ij}$,
one for each spanning tree of $K_n$.
Hence the algebraic boundary of $\mathcal{R}_{\bf f}$
is a (reducible) hypersurface of degree $(n{-}1)2^{n-1}$.

The Max-Cut Problem for $n{=}3$ asks to minimize the inner product with
 $C = (c_{12},c_{13},c_{23})$ over
$\mathcal{T} = \bigl\{(1,1,1),  \,(1,-1,-1), \,(-1,1,-1), \,(-1,-1,1) \bigr\}$.
The feasible region of the SDP relaxation is the {\em elliptope} on the left in \Cref{fig:somosa}.
It strictly contains the tetrahedron ${\rm conv}(\mathcal{T})$.
The region $\mathcal{R}_{\bf f}$
is the set of directions $C$ whose minimum over the elliptope is attained in $\mathcal{T}$.
It consists of the four circular cones over the  facets of the dual of the elliptope.
That dual body is shown in green in \Cref{fig:somosa}, next to the yellow elliptope.
Thus $\mathcal{R}_{\bf f}$
corresponds to the union of the four circular facets of the dual elliptope.
These four circles touch pairwise, just like the four ellipses in \Cref{fig:voronoi2d}.
 The algebraic boundary of $\mathcal{R}_{\bf f}$ has degree $8 = (3-1) 2^{3-1}$.
 \end{example}

The present paper is a sequel to \cite{CAPT}, where the
SDP-exact region for the ED~problem was shown to be full-dimensional in $\RR^n$.
We undertake a detailed study of  $\mathcal{R}_{\bf f}$
and its topological boundary $\partial \mathcal{R}_{\bf f}$. We define the
{\em algebraic boundary} $\partial_{\rm alg} \mathcal{R}_{\bf f}$ to be the Zariski closure
of $\partial \mathcal{R}_{\bf f}$. Our aim is to find
the polynomial defining this hypersurface, or at least to find its degree.
This degree is an intrinsic measure for the geometric complexity of the SDP-exact region.

The material that follows is organized into five sections.
In \Cref{s:2} we introduce the
{\em rank-one region} of a general semidefinite programming problem.
Building on the theory developed in \cite{NRS},  we compute the
degree of the algebraic boundary of this semialgebraic set.

In \Cref{s:3} we turn to the quadratic program \eqref{eq:qp1}.
We introduce its SDP relaxation, and show that $\mathcal{R}_{\bf f}$
coincides with the rank-one region of that relaxation.
In \Cref{thm:betaqp} we determine the degree of $\partial_{\rm alg} \mathcal{R}_{\bf f}$
under the assumption that $f_1,\ldots,f_m$ are generic.
That degree is strictly smaller than the corresponding degree for SDP, which appears in \Cref{thm:beta}.

\Cref{s:4} concerns the Euclidean distance problem and the case when the cost  function $g$ is linear.
\Cref{thm:unionspectrahedron} represents their SDP-exact regions in $\RR^n$
as bundles of spectrahedral shadows. Each shadow lies
in the normal space at a point on $V_{\bf f}$, and is the
linear image of a {\em master spectrahedron} that depends only on ${\bf f}$.
For linear $g$, the region $\mathcal{R}^{\it lin}_{\bf f}$
is determined by the {\em theta body} of Gouveia et al.~\cite{GPT};
see \Cref{thm:thetabody}.
For the ED~problem, $\mathcal{R}^{\it ed}_{\bf f}$ is a tubular neighborhood of
the variety $V_{\bf f}$.
\Cref{fig:voronoi2d}  showed this when
$V_{\bf f}$ consists of four points in $\RR^2$. Analogs  in $\RR^3$ are depicted in Figures \ref{fig:somosas},
 \ref{fig:tacos}, \ref{fig:dualsomosas} (for points) and
Figures \ref{fig:twistedcubic}, \ref{fig:hyperboloids} (for curves).

In \Cref{s:5} we study the algebraic geometry of the SPD-exact region
of the ED~problem.
 \Cref{thm:betaED} gives the degree of the algebraic boundary
$\,\partial_{\rm alg} \mathcal{R}^{\it ed}_{\bf f}$
when $V_{\bf f}$ is a generic complete intersection. It rests on representing our bundle as
a Segre product and projecting it into the ambient space of $V_{\bf f}$.
The abelian surface in \Cref{ex:twoellipticcurves} serves as a nice illustration.

\Cref{s:6} addresses the ED~problem when ${\bf f}$ is not a complete intersection.
\Cref{alg:EDp1} shows how to compute the SDP-exact region.
Several examples demonstrate what can happen.
The dual elliptope on the right of \Cref{fig:somosa}
reappears in five copies in \Cref{fig:dualsomosas}.

\section{The Rank-One Region in Semidefinite Programming}\label{s:2}

Consider a family of semidefinite programming problems with varying cost function:
\begin{equation}
\label{eq:sdp1}
 \min_{X\in\SymRR^{d}} \bigC\bullet X \quad \,\hbox{subject to}
 \,\,\bigA_i  \bullet X = b_i \,\,\,\hbox{for} \,\, i =1,2,\ldots,l,
 \,\,\hbox{and} \,\, X\succeq 0.
\end{equation}
Here $\,\bigC \bullet X = \trace(\bigC X )\,$ is the usual inner product on the space
$\SymRR^{d} \simeq \RR^{\binom{d+1}{2}}$ of symmetric $d \times d$ matrices.
The numbers $b_1,\ldots,b_l \in \RR$ and the matrices  $\bigA_1,\ldots,\bigA_l \in \SymRR^{d}$
are fixed in \eqref{eq:sdp1}, whereas the cost matrix $\bigC$ varies freely over $\SymRR^d$.
The rank-one region $\mathcal{R}_{\bigA,b}$ is a semialgebraic subset of $\SymRR^d$ that depends on $\bigA = (\bigA_1,\ldots,\bigA_l)$ and $b= (b_1,\ldots,b_l)$.
It consists of all matrices $\bigC$ such that~\eqref{eq:sdp1} has a rank-one solution and strict complementarity holds.
See Definition~\ref{def:rank1sol} below.
In this section we study the rank-one region $\mathcal{R}_{\bigA,b}$ and its boundary.
The methods introduced here will be later used in \Cref{s:3} to study the SDP-exact region $\mathcal{R}_{\bf f}$.

The feasible set of~\eqref{eq:sdp1} is the spectrahedron
$ \Sigma_{\bigA,b} =
\bigl\{X {\in} \SymRR^{d} : X{\succeq} 0,\, \bigA_i {\bullet} X {=} b_i \text{ for }1{\leq} i{\leq} l  \bigr\} $.
We assume that $\Sigma_{\bigA,b}$ is non-empty and does not contain the zero matrix.
Then the  region $\mathcal{R}_{\bigA, b}$ is the union
of all normal cones at extreme points of rank one in the boundary of $\Sigma_{\bigA,b}$.

\begin{figure}[htb]
  \centering
  \includegraphics[width=150pt]{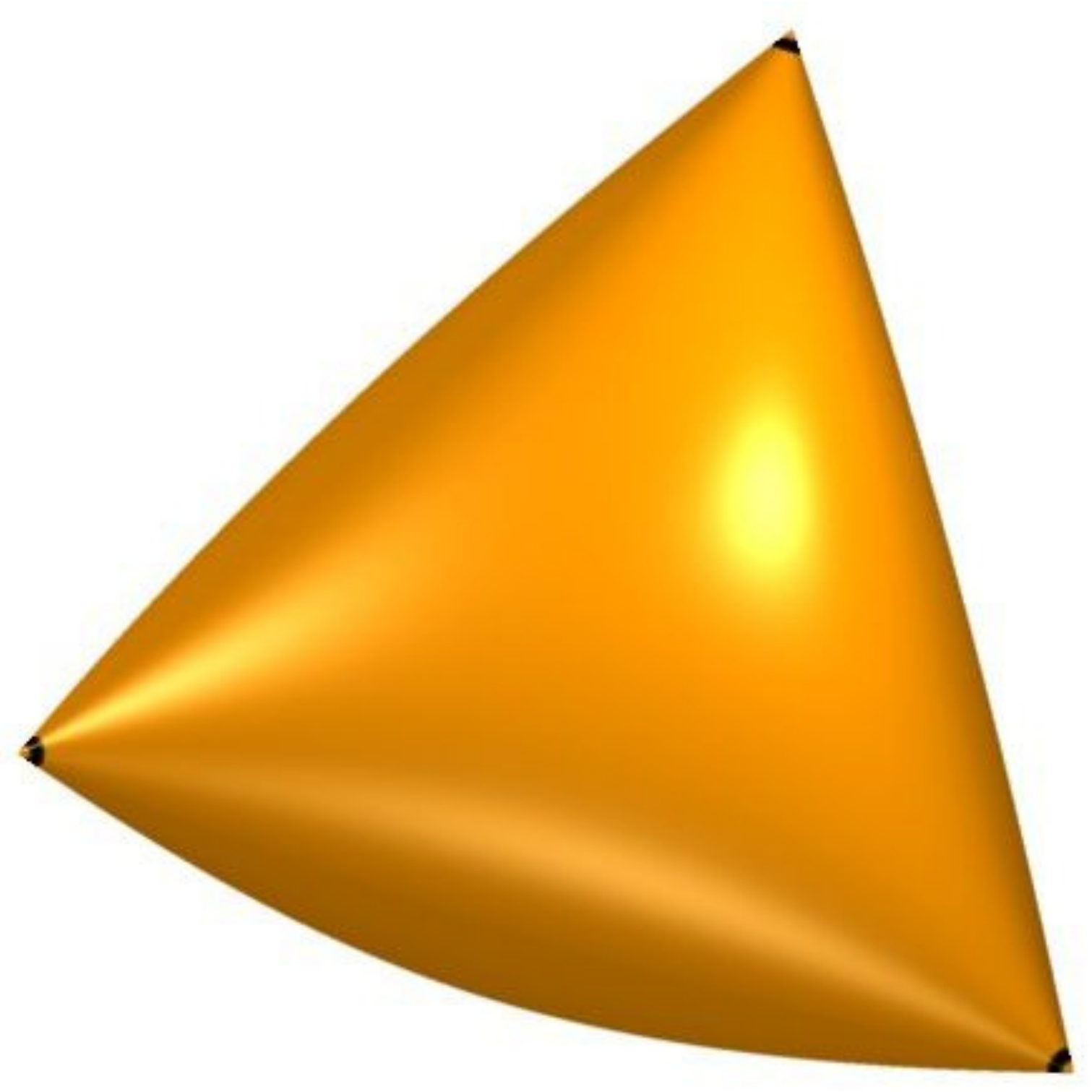}\hspace{10pt} \qquad \qquad %
  \includegraphics[width=150pt]{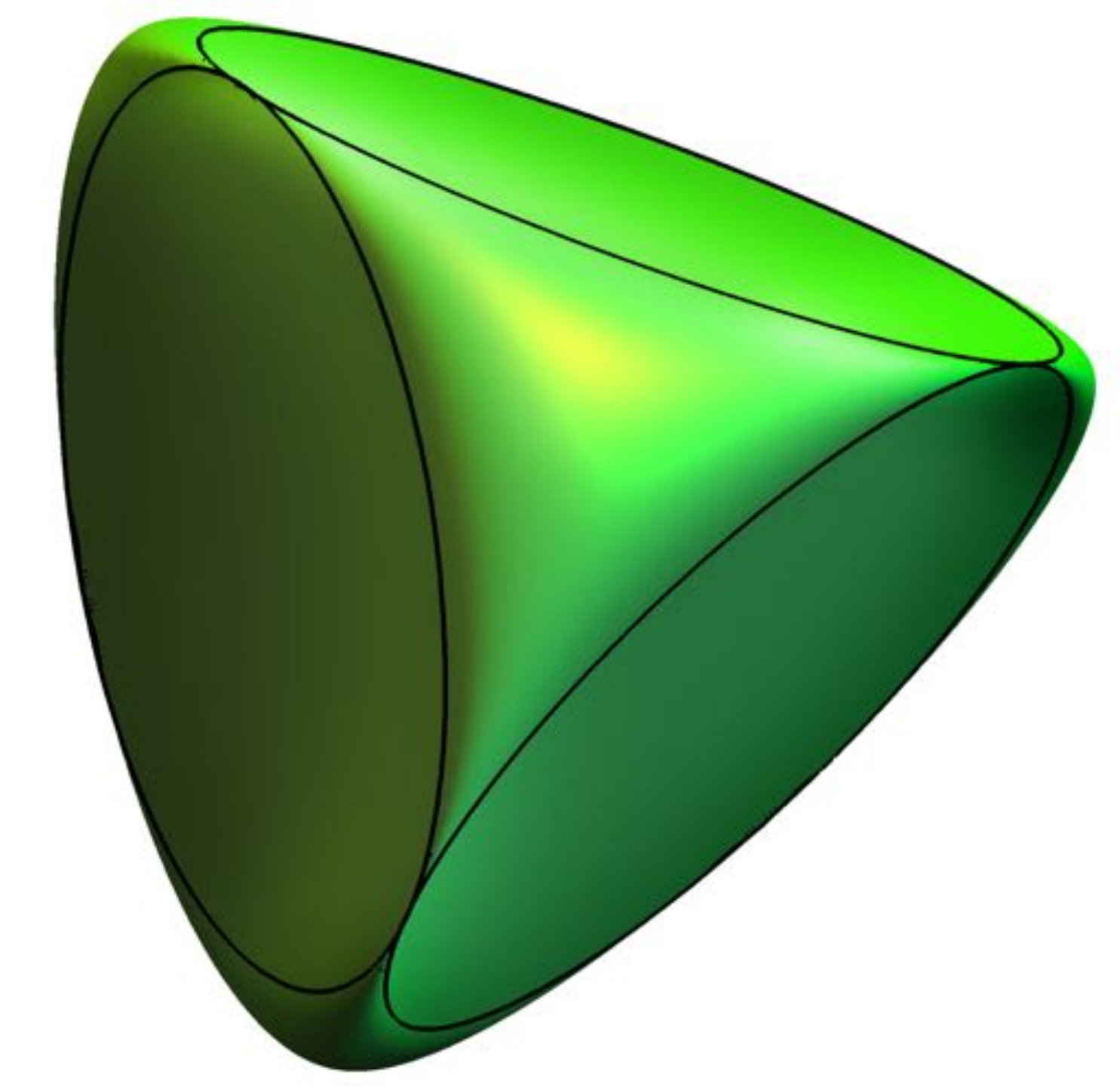}
  \caption{
    The elliptope (left) has four vertices, corresponding to the rank-one matrices.
    The rank-one region consists of the linear forms for which the minimum is attained at a vertex.
    It is given by the cones over the four circular facets of the dual convex body (right).
    \label{fig:somosa}}
\end{figure}

\begin{example}[$d=l=3$] \rm
\label{ex:somosarevisted}
The convex bodies in \Cref{fig:somosa} arise for Max-Cut with $n=3$ in \Cref{ex:maxcut}.
The spectrahedron $\Sigma_{\bigA,b}$ on the left is the elliptope.
It is bounded by {\em Cayley's cubic surface}.
The four nodes are the rank-one points in $\partial \Sigma_{\bigA,b}$.
The dual convex body, shown on the right, is bounded by the
{\em quartic Steiner surface} and it has four circular facets.
The rank-one region $\mathcal{R}_{\bigA,b}$ is given
by the interiors of these four circles, viewed as cones in~$\mathcal{S}^3$.
\end{example}

The semidefinite program that is dual to~\eqref{eq:sdp1} has the form:
\begin{equation}
 \label{eq:sdp2}
  \max_{Y\in\SymRR^{d},\,\lambda\in\RR^{l}} \quad  b^T \lambda
  \qquad\text{subject to}\qquad
  Y \,=\, \bigC - \textstyle\sum_{i=1}^l \lambda_i \bigA_i  \quad {\rm and} \quad     Y \succeq 0.
 \end{equation}
The following {\em critical equations} express
the complementary slackness condition that links the
optimal solution $X \succeq 0$ of the primal~\eqref{eq:sdp1}  and the
optimal solution $Y \succeq 0$ of the dual~\eqref{eq:sdp2}:
\begin{equation} \label{eq:criticaleqns}
 \bigA_i \bullet X = b_i \;\text{ for } 1 \leq i\leq l \quad {\rm and} \quad
    Y = \bigC - \textstyle\sum_{i=1}^l \lambda_i \bigA_i  \quad {\rm and} \quad
    X \cdot Y = 0.
\end{equation}
Recall that \emph{strict complementarity} holds if $\rank(X) \!+\! \rank(Y) \nobreak\!=\! d$.
The rank-one region is the semialgebraic set given by the critical equations and strict complementarity, as follows:

\begin{definition} \label{def:rank1sol}
  The \emph{rank-one region} $\mathcal{R}_{\bigA,b}$ is the set of all $\,\bigC
  \in \SymRR^{d}$ for which there exist
  $\lambda \in \RR^l$ and $X,Y\in \SymRR^{d}$ such that
  $\,X , Y\succeq 0$, $\,  \rank(X)= 1 $,
  $\, \rank(Y)= d-1\,$ and  \eqref{eq:criticaleqns} holds.
\end{definition}

\begin{remark} \rm
  The above construction can be extended to define the \emph{rank-$r$ region} for other values of~$r$.
  It is an interesting open problem to investigate the geometry of these regions.
\end{remark}

The results that follow hold for \emph{generic} instances of the matrices $\bigA_i$ and the vector $b$.
This implies that the results hold for ``almost all'' instances of $(\bigA, b)$,
i.e., outside a set of Lebesgue measure zero.
More precisely, a property holds generically if there is a polynomial  $f$
in the entries of $\bigA$ and $b$ such that it
holds whenever this polynomial does not vanish.

Genericity was also a standing assumption in the derivation of the {\em algebraic degree of semidefinite programming} by Nie~et al.~\cite[\S2]{NRS}.
That degree, denoted $\delta(l,d,r)$,
 is the number of complex solutions $(X,Y)$ of the critical equations (\ref{eq:criticaleqns}) for the SDP \eqref{eq:sdp1},
with $l$ constraints for $d \times d$ matrices, assuming that
  ${\rm rank}(X) = d-r$ and ${\rm rank}(Y) = r$.
 A formula for general $r$ was given in \cite{bothmer}.
 The easier case $r=d-1$ appeared in \cite[Theorem~11]{NRS}:

\begin{proposition} \label{prop:delta}
The algebraic degree of rank-one solutions $X$ to the SDP  in \eqref{eq:sdp1} equals
\begin{equation*}
 \delta(l,d,d-1)\,\, = \,\,2^{l-1} \binom{d}{l}.
 \end{equation*}
\end{proposition}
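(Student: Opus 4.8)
The plan is to count the complex solutions $(X,Y,\lambda)$ of the critical equations \eqref{eq:criticaleqns} under the rank constraints $\rank(X)=1$, $\rank(Y)=d-1$, and to show this count is $2^{l-1}\binom{d}{l}$. I would parametrize a rank-one matrix as $X = xx^T$ for $x \in \CC^d$, noting that the sign ambiguity $x \mapsto -x$ means each geometric solution $X$ corresponds to two choices of $x$ (this factor of two will explain the ``$-1$'' in the exponent $2^{l-1}$). The complementarity $X\cdot Y = 0$ then says $Yx = 0$, i.e. $x$ spans the one-dimensional kernel of the rank-$(d-1)$ matrix $Y$. Substituting $Y = \bigC - \sum_i \lambda_i \bigA_i$, the condition $Yx=0$ becomes $d$ bilinear equations in $(x,\lambda)$ that are linear in $\lambda$, while the primal feasibility $\bigA_i \bullet xx^T = x^T \bigA_i x = b_i$ gives $l$ quadratic equations in $x$ alone.

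The key step is then a Bézout-style count on this system, but a naive product of degrees overcounts because the equations $Yx = 0$ have spurious solutions at $x=0$ and the linear-in-$\lambda$ structure must be exploited. The cleaner route is to eliminate $\lambda$: generically the $d\times l$ matrix whose columns are $\bigA_i x$ has full rank $l$, so the system $(\bigC - \sum \lambda_i \bigA_i)x = 0$ forces $\bigC x$ to lie in the column span of $(\bigA_1 x,\dots,\bigA_l x)$, which is the vanishing of all $(l+1)\times(l+1)$ minors of the $d\times(l+1)$ matrix $[\,\bigC x \mid \bigA_1 x \mid \cdots \mid \bigA_l x\,]$. Intersecting this determinantal variety in $\PP^{d-1}$ with the $l$ quadrics $x^T\bigA_i x = b_i$ and applying a Giambelli–Thom–Porteous computation — or simply invoking the result of \cite[Theorem~11]{NRS} which is exactly this count — yields $2^l\binom{d}{l}$ points $x\in\PP^{d-1}$, hence $2^{l-1}\binom{d}{l}$ after accounting for the two-to-one cover $x \mapsto xx^T$. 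Actually, since \Cref{prop:delta} is quoted directly from \cite{NRS}, the honest ``proof'' here is to recall their derivation: the algebraic degree $\delta(l,d,r)$ is defined as the number of complex critical pairs, and specializing the general formula (or the determinantal computation above) to $r = d-1$ gives the stated binomial expression.

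The main obstacle is verifying that genericity of $(\bigA,b)$ indeed forces all the transversality and full-rank conditions used above: that the $d\times(l+1)$ matrix has the expected rank locus of the expected dimension, that this locus meets the $l$ quadrics transversally in the expected number of reduced points, and that none of these points degenerate to $x=0$ or to matrices $Y$ of rank lower than $d-1$. Each of these is a Zariski-open condition on $(\bigA,b)$, so it suffices to exhibit one instance where it holds, or to appeal to the corresponding genericity argument already carried out in \cite[\S2]{NRS}. Once transversality is in hand, the degree computation is the standard Thom–Porteous / Chern class calculation on $\PP^{d-1}$, and the binomial coefficient $\binom{d}{l}$ emerges as the degree of the relevant determinantal locus while the $2^{l-1}$ collects the $2^l$ from the $l$ quadrics divided by the $2$-fold symmetry.
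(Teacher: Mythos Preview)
Your proposal agrees with the paper's treatment: Proposition~\ref{prop:delta} is not proved in the paper at all but simply quoted from \cite[Theorem~11]{NRS}, and you correctly recognize this and ultimately defer to that reference.

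One remark on the sketch you add around the citation. The bookkeeping for the factor of $2$ is slightly garbled. The constraints $x^T\bigA_i x = b_i$ are inhomogeneous (generic $b_i$ are nonzero), so they do not cut out hypersurfaces in $\PP^{d-1}$; the count you describe really lives in affine $\CC^d$. There the map $x\mapsto xx^T$ from $\CC^d$ to rank-one symmetric matrices is two-to-one via $x\leftrightarrow -x$, and the $l$ affine quadrics together with the codimension-$(d-l)$ determinantal locus yield $2^l\binom{d}{l}$ solutions $x$, halving to $2^{l-1}\binom{d}{l}$ matrices~$X$. If instead you work in $\PP^{d-1}$, the Veronese map $[x]\mapsto[xx^T]$ is one-to-one and no halving occurs---but then the inhomogeneous quadrics must be handled differently (e.g.\ by homogenizing with an extra coordinate). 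This is a cosmetic slip rather than a genuine gap, since your actual proof is the citation, exactly as in the paper.
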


The following geometric formulation of SDP was proposed in~\cite[eqn.~(4.1)]{NRS}.
Let $\mathcal{V}$ be the $(l-1)$-dimensional subspace of
$\SymRR^d$ spanned by $\{\bigA_2,\ldots,\bigA_l\}$, and let $\mathcal{U}$
be the $(l+1)$-dimensional subspace of $\SymRR^d$  spanned by
$\{\bigC,\bigA_1\}$ and $\mathcal{V}$. This specifies a dual pair of flags
\begin{equation}
\label{eq:flags}
 \mathcal{V} \, \subset \,\mathcal{U} \, \subset \, \SymRR^d \qquad {\rm and} \qquad
\mathcal{U}^\perp \, \subset \,\mathcal{V}^\perp \, \subset \, \SymRR^d . \end{equation}
See \cite[eqn.~(3.3)]{NRS}. The critical equations \eqref{eq:criticaleqns} can now be written as
\begin{equation}
\label{eq:UV}  X \,\in \,\mathcal{V}^\perp \quad {\rm and} \quad
Y \, \in \,\mathcal{U} \quad {\rm and} \quad X \cdot Y = 0 .
\end{equation}
The SDP problem \eqref{eq:sdp1} is equivalent to solving \eqref{eq:UV} subject  to $X , Y \succeq 0$.
The algebraic degree $\delta(l,d,r)$ is the number of complex solutions to
\eqref{eq:UV} with ${\rm rank}(X) = d-r$ and ${\rm rank}(Y) = r$.
The dual pair of flags in~\eqref{eq:flags} will also play a critical role in our derivation of the degree of the boundary of the rank-one region.

\begin{remark} \label{rem:theseequiv} \rm
If the matrices $\bigA_i$ and the scalars $b_i$ are generic
then strict complementarity always holds~\cite[Corollary 8]{NRS},
and hence the following conditions are equivalent:
\begin{itemize}
\item The primal SDP problem \eqref{eq:sdp1} has a unique optimal matrix $X$ of rank $1$. \vspace{-0.1in}
\item The dual SDP problem \eqref{eq:sdp2} has an optimal matrix $Y$ of rank $d-1$. \vspace{-0.1in}
\item The system \eqref{eq:UV} has a solution $(X,Y)$ with $\rank(X) = 1$
and $X,Y \succeq 0$.
\end{itemize}
These conditions characterize the set of cost matrices $\,\bigC$ that lie in the rank-one region $\mathcal{R}_{\bigA,b}$.
\end{remark}

Suppose that the rank-one region $\mathcal{R}_{\bigA,b}$ is non-empty.
The topological
 boundary $\partial \mathcal{R}_{\bigA,b} $ is a closed
 semialgebraic set of pure codimension one in $\SymRR^{d}$. Its Zariski closure
 $\partial_{\rm alg} \mathcal{R}_{\bigA,b} $ is an algebraic  hypersurface, called the {\em rank-one boundary}.
We view this hypersurface either in the complex affine space $\CC^{\binom{d+1}{2}}$,
or in the corresponding projective space
$\PP( \SymRR^d) \simeq \PP^{\binom{d+1}{2}-1}$.
 By construction, the polynomial
defining $\partial_{\rm alg} \mathcal{R}_{\bigA,b}$ has coefficients in the field
generated by the entries of $\bigA$ and $b$ over $\QQ$.
The {\em rank-one boundary degree} is the degree of this polynomial:
$$ \beta(l,d) \,\, = \,\, \deg\bigl( \partial_{\rm alg}\mathcal{R}_{\bigA,b} \bigr). $$
Our main result in this section furnishes a formula for the degree of the rank-one boundary.

\begin{theorem} \label{thm:beta}
Let $ 3 \leq l \leq d$ and consider the SDP with generic $\bigA$ and $b$, as given in \eqref{eq:sdp1}.
The degree of the hypersurface $\partial_{\rm alg} \mathcal{R}_{\bigA,b}$
that bounds the rank-one region $\mathcal{R}_{\bigA,b}$ equals
\begin{equation}
\label{eq:beta} \beta(l,d) \,\, = \,\, 2^{l-1} (d-1)\binom{d}{l} - 2^l\binom{d}{l+1} .
\end{equation}
\end{theorem}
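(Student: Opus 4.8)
The plan is to identify the rank‑one boundary with the locus where strict complementarity breaks down, and then to compute its degree by intersection theory on a projective bundle over $\PP^{d-1}$.

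\smallskip\noindent\emph{Step 1: locating the boundary.}
For generic $\bigA,b$, \Cref{rem:theseequiv} lets us regard $\mathcal{R}_{\bigA,b}$ as the union of the normal cones $N_{xx^T}$ over the rank‑one extreme points $xx^T$ of $\Sigma_{\bigA,b}$. One checks that each $N_{xx^T}$ is the Minkowski sum of the cone $\{\,Y\succeq0:Yx=0\,\}$ with $\mathrm{span}(\bigA_1,\dots,\bigA_l)$, so its facial boundary — hence all of $\partial\mathcal{R}_{\bigA,b}$ — lies where $\rank Y$ drops to $d-2$; that is, in the image under the projection $\pi\colon(\bigC,X,Y,\lambda)\mapsto\bigC$ of the variety
\[
  \widetilde{\mathcal{Z}}\;=\;\bigl\{\,(\bigC,X,Y,\lambda)\ \text{satisfying \eqref{eq:criticaleqns} with}\ \rank X=1,\ \rank Y\le d-2\,\bigr\}.
\]
A dimension count shows $\overline{\pi(\widetilde{\mathcal{Z}})}$ is an irreducible hypersurface when $l<d$ (a union of $2^{d-1}$ components when $l=d$); since $\partial_{\rm alg}\mathcal{R}_{\bigA,b}$ is nonempty of pure codimension one, the two coincide. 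A local analysis of the fold in which two rank‑$(d{-}1)$ critical points coalesce into a single rank‑$(d{-}2)$ point shows moreover that $\pi$ is birational onto this image.

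\smallskip\noindent\emph{Step 2: reduction to a point count.}
The degree of the hypersurface $\partial_{\rm alg}\mathcal{R}_{\bigA,b}$ equals its number of intersection points with a generic line $\bigC(t)=\bigC_0+t\bigC_1$. With $\mathcal{V}'=\mathrm{span}(\bigA_1,\dots,\bigA_l)$, the space $\mathcal{U}=\mathrm{span}\{\bigC,\bigA_1\}+\mathcal{V}$ of \eqref{eq:flags} becomes $\mathcal{U}(t)=\RR\bigC(t)+\mathcal{V}'$, which as $t$ varies runs through the pencil of $(l{+}1)$‑planes lying between $\mathcal{V}'$ and the $(l{+}2)$‑plane $\mathcal{U}_2=\mathcal{V}'+\mathrm{span}(\bigC_0,\bigC_1)$; thus $Y$ lies in some $\mathcal{U}(t)$ iff $Y\in\mathcal{U}_2$, and then the pair $(X,Y)$ recovers $t$ uniquely. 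Writing $X=xx^T$, the geometric critical equations \eqref{eq:UV} for rank‑one $X$ amount to the $l-1$ quadrics $x^T\bigA_i x=0$ $(i=2,\dots,l)$ on $[x]\in\PP^{d-1}$ together with $Yx=0$, so combining with Step 1,
\[
  \beta(l,d)\;=\;\#\bigl\{\,([x],[Y])\in\PP^{d-1}\times\PP(\mathcal{U}_2):\ x^T\bigA_i x=0\ (i=2,\dots,l),\ Yx=0,\ \rank Y\le d-2\,\bigr\}.
\]

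\smallskip\noindent\emph{Steps 3--4: intersection theory on the kernel bundle.}
Let $h$ be the hyperplane class of $\PP^{d-1}=\PP(\CC^d)$ and $Q$ the tautological rank‑$(d{-}1)$ quotient bundle, so $c(Q)=(1-h)^{-1}$. Since $Yx=0$ says exactly that $Y$ descends to $\mathrm{Sym}^2\bigl((\CC^d/\langle x\rangle)^{*}\bigr)$, the incidence $\mathcal{K}=\{([x],[Y]):Yx=0\}$ is the projective bundle $\PP(\mathcal{E})$ over $\PP^{d-1}$ with $\mathcal{E}=\mathrm{Sym}^2 Q^{\vee}$ of rank $\binom d2$; it maps birationally onto the discriminant hypersurface $\{\det Y=0\}\subset\PP^{N-1}$, $N=\binom{d+1}{2}$. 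Let $\xi$ be the relative $\mathcal{O}(1)$ of $\mathcal{K}$, which also represents the pullback of the hyperplane class from the $\PP^{N-1}$‑factor. On $\mathcal{K}$ the three conditions in the count have classes $(2h)^{l-1}$, $\xi^{\,N-l-2}$ (the span $\PP(\mathcal{U}_2)$ has codimension $N-l-2$), and the relative discriminant divisor $\{\det Y=0\text{ on }\CC^d/\langle x\rangle\}$, the zero scheme of a section of $\mathcal{O}(d{-}1)\otimes\pi^{*}(\det Q^{\vee})^{\otimes2}$, of class $D=(d-1)\xi-2h$. For generic data these intersections are transverse, so
\[
  \beta(l,d)\;=\;2^{l-1}\!\int_{\mathcal{K}}h^{\,l-1}\xi^{\,N-l-2}\bigl((d-1)\xi-2h\bigr)\;=\;2^{l-1}\Bigl[(d-1)\!\int_{\mathcal{K}}h^{\,l-1}\xi^{\,N-l-1}-2\!\int_{\mathcal{K}}h^{\,l}\xi^{\,N-l-2}\Bigr].
\]
Pushing forward to $\PP^{d-1}$ turns $\int_{\mathcal{K}}h^{a}\xi^{\,N-2-a}$ into $\int_{\PP^{d-1}}h^{a}s_{d-1-a}(\mathcal{E})$, and a splitting‑principle computation gives $s(\mathrm{Sym}^2 Q^{\vee})\equiv(1+h)^{d}\pmod{h^{d}}$ (equivalently $c(\mathrm{Sym}^2 Q^{\vee})\equiv(1+h)^{-d}$; consistent with $c_1=-dh$ and with $\deg\{\det Y=0\}=d$), whence $\int_{\mathcal{K}}h^{a}\xi^{\,N-2-a}=\binom{d}{a+1}$. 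Taking $a=l-1$ and $a=l$ gives
\[
  \beta(l,d)\;=\;2^{l-1}\Bigl[(d-1)\binom dl-2\binom d{l+1}\Bigr]\;=\;2^{l-1}(d-1)\binom dl-2^{l}\binom d{l+1},
\]
which is \eqref{eq:beta}. (As a check, the same push‑forward yields $\int_{\mathcal{K}}(2h)^{l-1}\xi^{\,N-l-1}=2^{l-1}\binom dl=\delta(l,d,d-1)$ of \Cref{prop:delta}.)

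\smallskip\noindent\emph{Main obstacle.}
The delicate part is Step 1: rigorously identifying $\partial_{\rm alg}\mathcal{R}_{\bigA,b}$ as a complex variety — pinning down its components and the birationality of $\pi$, and excluding for generic data any further codimension‑one boundary behavior (for instance the optimum running off to infinity in $\Sigma_{\bigA,b}$). Once this is settled, Steps 2--4 are a routine Schubert‑calculus computation on $\PP(\mathrm{Sym}^2 Q^{\vee})$.
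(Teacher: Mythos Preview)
Your computation is correct and reaches the same formula, but the route to the key class $[BV]$ differs from the paper's. The paper shows, by a commutative-algebra argument on the ideals $I_{CV}+\langle c_{11}\rangle$ and $I_{CV}+\operatorname{Min}_{d-1}(Y)\cdot\langle x_1^2\rangle$, that the divisor $\{c_{11}=0\}$ on $CV$ splits as $BV$ plus the non-reduced component $\{x_1^2=0\}$, yielding $[BV]=((d-1)t-s)\cdot[CV]$; it then reads off $\beta(l,d)$ from the known bidegree \eqref{eq:bidegreeCV}. You instead realise $CV$ as the projective bundle $\PP(\mathrm{Sym}^2 Q^{\vee})\to\PP^{d-1}$ and identify $BV$ directly as the relative discriminant, whose class $(d-1)\xi-2h$ you obtain from the line bundle $\mathcal{O}(d{-}1)\otimes\pi^{*}(\det Q^{\vee})^{\otimes2}$; pushing forward via Segre classes recovers the same numbers. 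Under the Veronese pullback $s\mapsto 2h$, $t\mapsto\xi$, the two classes agree, so the approaches are equivalent at the level of intersection theory. Your packaging is more intrinsic and makes the origin of the ``$-2h$'' term transparent (it is the twist by $(\det Q^{\vee})^{\otimes2}$), whereas the paper's ideal manipulation is more elementary and avoids the projective-bundle machinery. Both routes ultimately rely on \Cref{prop:delta} as input---for you, via the identity $s(\mathrm{Sym}^2 Q^{\vee})\equiv(1+h)^d$.

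Where you are weaker than the paper is exactly the point you flag: your Step~1 asserts birationality of $\pi$ and transversality ``for generic data'' without proof, while the paper supplies an explicit pair $(X_0,Y_0)$ and specific flags $\mathcal{V}\subset\mathcal{U}'$ at which the tangent spaces are verified to be complementary. This is what certifies that the cohomological count on $BV$ really equals $\deg(\partial_{\rm alg}\mathcal{R}_{\bigA,b})$, and it would strengthen your argument to include a comparable verification.
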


\begin{table}[h]
\begin{center} \qquad
  \setlength{\tabcolsep}{7pt}
\begin{tabular}{c|*{5}{c}}
\toprule
  \multicolumn{6}{c}{Algebraic degrees $\delta(l,d,d-1)$}\\
\midrule
  $l\backslash d$ & 3 & 4 & 5 & 6 & 7  \\
\midrule
2 &  6 & 12 & 20 & 30 & 42 \\
3 & 4 & 16 & 40 & 80 & 140 \\
4 &   & 8 & 40 & 120 & 280 \\
5 &   &   & 16 & 96 & 336 \\
6 &   &   &   & 32 & 224 \\
7 &   &   &   &   & 64 \\
\bottomrule
 \end{tabular} \qquad \qquad
  \setlength{\tabcolsep}{9pt}
\begin{tabular}{c|*{5}{c}}
\toprule
  \multicolumn{6}{c}{Rank-one boundary degrees $\beta(l,d)$}\\
  \midrule
  $l\backslash d$ & 3 & 4 & 5 & 6 & 7  \\
\midrule
2 & 4 & 10 & 20 & 35  & 66 \\
3 & 8 & 40 & 120 & 280 & 560 \\
4 &   & 24 & 144 & 504 & 1344 \\
5 &   &   & 64 & 448 & 1792 \\
6 &   &   &   & 160 & 1280 \\
7 &   &   &   &   & 384 \\
\bottomrule
 \end{tabular}
\vspace{-0.11in}
\end{center}
\caption{ Algebraic degrees and boundary degrees of SDP.
 \label{tab:degrees} }
  \end{table}

\Cref{tab:degrees} illustrates \Cref{prop:delta}
and \Cref{thm:beta}. It shows the algebraic degrees of rank-one SDP
on the left, and  corresponding  rank-one boundary degrees on the right.
The entry for $l=d=3$ equals $8 = 2{+}2{+}2{+}2$, as argued in \Cref{ex:somosarevisted}
and seen in \Cref{fig:somosa}.
The first row ($l=2$)  is not covered by \Cref{thm:beta}.
This case requires special consideration.

\begin{proposition} \label{prop:m2}
  If $\,l=2$ then the rank-one region $\mathcal{R}_{\bigA,b}$ is dense in the matrix space $\SymRR^d$.
If $\bigA, b$ are generic then
$\partial\mathcal{R}_{\bigA,b} = \SymRR^d \backslash \mathcal{R}_{\bigA,b}$
is a hypersurface of degree $\beta(2,d) = \binom{d+1}{3}$.
\label{prop:betameq2}
\end{proposition}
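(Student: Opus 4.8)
The plan is to analyze the case $l=2$ directly via the geometric reformulation in \eqref{eq:UV}. Here $\mathcal{V} = \RR \bigA_2$ is a line, so $\mathcal{V}^\perp$ is a hyperplane in $\SymRR^d$, and $\mathcal{U} = \langle \bigC, \bigA_1, \bigA_2\rangle$ is generically a plane (three-dimensional subspace). First I would observe that for $l=2$ the algebraic degree of rank-one solutions is $\delta(2,d,d-1) = 2\binom{d}{2} = d(d-1)$, which is \emph{even} (and in fact equals twice the number of rank-one matrices in a generic $\PP^1$-pencil intersected appropriately). The key structural point: when $\rank(X)=1$ and $X,Y\succeq 0$ with $XY=0$, the matrix $Y$ has rank $d-1$, so its kernel is the line spanned by $X$'s image; dually, $X = vv^T$ where $v$ spans $\ker Y$. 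Since $Y \in \mathcal{U}$ is a generic element of a 3-dimensional linear system of symmetric matrices constrained to be PSD of corank one, and $X\in\mathcal{V}^\perp$ is a single linear condition on $vv^T$, I would show the system is \emph{always} solvable over $\RR$ with the PSD sign conditions for $\bigC$ in a dense open set: this is why $\mathcal{R}_{\bigA,b}$ is dense. Concretely, one can argue that for $l=2$ the spectrahedron $\Sigma_{\bigA,b}$ is (generically) an unbounded slice whose boundary rank-one locus is nonempty, and every generic linear functional attains its minimum at such a rank-one extreme point — the obstructions that appear for $l\geq 3$ (the minimum being attained at a higher-rank face) occur only on a lower-dimensional set.

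Next, to compute the boundary degree, I would identify $\partial\mathcal{R}_{\bigA,b}$ with the set of $\bigC$ for which the rank-one solution $(X,Y)$ of \eqref{eq:UV} is \emph{not} strictly complementary in the generic way, equivalently the locus where the unique rank-one optimal $X$ degenerates — this happens precisely when the line $\ker Y$ (with $Y\in\mathcal{U}\cap\{\text{corank }1,\ \text{PSD}\}$) becomes tangent to, or fails to meet transversally, the hyperplane condition encoded by $\mathcal{V}^\perp$. The cleanest route: parametrize $Y(\bigC) \in \mathcal{U}$ and its kernel vector $v(\bigC)$, note $X = v v^T$, and write the single remaining constraint $\bigA_2 \bullet vv^T = b_2$ (after normalizing away $\bigA_1$). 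The boundary is then the discriminant of the resulting system in $\bigC$. I expect this discriminant to be governed by the condition that the rank-$(d-1)$ locus of the pencil $\{ \bigC - \lambda_1\bigA_1 - \lambda_2\bigA_2\}$ — a curve of degree related to $\binom{d}{2}$ in the pencil — meets the auxiliary linear condition with multiplicity, and tracking its degree in $\bigC$ gives $\binom{d+1}{3}$. A sanity check is available: for $d=3$, $\binom{4}{3}=4$, matching the entry in \Cref{tab:degrees}; for $d=4$, $\binom{5}{3}=10$; for $d=5$, $\binom{6}{3}=20$, all consistent with the table.

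The main obstacle will be making the density claim fully rigorous: one must rule out that the minimum of a generic linear functional over $\Sigma_{\bigA,b}$ is attained on a positive-dimensional face of rank $\geq 2$, or "at infinity" if $\Sigma_{\bigA,b}$ is unbounded. I would handle this by a dimension count — the set of $\bigC$ whose optimal face has rank $\geq 2$ is the normal-cone bundle over the rank-$\geq 2$ extreme points, and for $l=2$ this bundle has codimension $\geq 1$ (whereas the rank-one normal cones are full-dimensional). The second subtlety is that $\beta(2,d) = \binom{d+1}{3}$ must be extracted as a genuine polynomial degree, not just a set-theoretic closure; here I would invoke, as in the proof of \Cref{thm:beta}, that the boundary is cut out by a single discriminant-type polynomial whose irreducible factors all appear, so that the Zariski closure $\partial_{\rm alg}\mathcal{R}_{\bigA,b}$ equals the full complement $\SymRR^d \setminus \mathcal{R}_{\bigA,b}$ and carries degree exactly $\binom{d+1}{3}$. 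The remaining steps — checking non-emptiness of the rank-one locus in $\partial\Sigma_{\bigA,b}$ for generic $\bigA,b$, and that the complement is pure codimension one — are routine given the genericity hypotheses and the results already established in this section.
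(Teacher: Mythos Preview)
Your density argument is fine in spirit and essentially re-derives what the paper cites as the Pataki range: for $l=2$ the generic optimal rank pair is forced to be $(1,d-1)$, so the normal-cone bundle over rank-$\geq 2$ extreme points has positive codimension. The paper dispatches this in one sentence by invoking \cite[\S 3]{NRS}, but your dimension count would get there too.

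The genuine gap is in your degree computation. You misidentify the boundary condition as a tangency or discriminant phenomenon on the \emph{primal} side --- ``the line $\ker Y$ becomes tangent to the hyperplane condition encoded by $\mathcal{V}^\perp$'' --- and propose to compute a discriminant of the constraint $\bigA_2 \bullet vv^T = b_2$. That is not what fails at the boundary. The primal constraints $\bigA_i\bullet X = b_i$ are always satisfied by feasibility; what fails is \emph{strict complementarity}, i.e.\ the dual optimal $Y = \bigC - \lambda_1\bigA_1 - \lambda_2\bigA_2$ drops from rank $d-1$ to rank $\leq d-2$. So the boundary is exactly the set of $\bigC$ for which the $2$-plane $\PP(\mathcal{U}) = \PP\langle \bigC,\bigA_1,\bigA_2\rangle$ meets the codimension-$3$ determinantal variety $\{\rank(Y)\leq d-2\}\subset\PP(\SymRR^d)$. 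This condition is precisely the \emph{Chow form} of that variety, and its degree in $\bigC$ equals the degree of the variety itself, which is $\binom{d+1}{3}$ by Harris--Tu \cite[Prop.~12(b)]{HarrisTu}. Your proposed route through ``the rank-$(d-1)$ locus of the pencil meeting an auxiliary linear condition with multiplicity'' conflates two different rank strata and would not produce the correct degree without eventually reducing to this Chow-form statement anyway.
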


\begin{proof}
The semialgebraic set  $\mathcal{R}_{\bigA,b}$ is dense in
the classical topology on $\SymRR^d$ because the {\em Pataki range} \cite[\S 3]{NRS}
consists of a single rank for $l=2$. This means that, for almost all
cost matrices $\bigC$, there is an optimal pair $(X,Y)$ that satisfies ${\rm rank}(X)=1$ and
${\rm rank}(Y) = d-1$. The boundary $\partial\mathcal{R}_{\bigA,b}$ is the set of
$\bigC$ such that the optimal matrix $Y = \bigC - \lambda_1 \bigA_1 - \lambda_2 \bigA_2$
has rank $\leq d-2$. The polynomial in $\bigA_1,\bigA_2,\bigC$ that
defines this hypersurface is
the {\em Chow form} of the determinantal variety $\{{\rm rank}(Y) \leq d-2 \}$.
This variety has codimension three in $\PP(\SymRR^d)$ and degree $\binom{d+1}{3}$ (see \cite[Prop. 12(b)]{HarrisTu}).
This is the degree of the Chow form in the entries of $\bigC$, and hence it is the degree
of our hypersurface $\partial_{\rm alg} \mathcal{R}_{\mathcal{A}, b}$.
\end{proof}

The proof of  Theorem \ref{thm:beta} requires additional concepts from algebraic geometry.
We work with the {\em Veronese variety}  $\,\PP^{d-1} \hookrightarrow \PP(\SymRR^d)$.
By \cite[Proposition~12]{NRS}, its {\em conormal variety}~is
 \begin{equation} \label{eq:CV}
\!\! CV \,= \, \bigl\{ (X,Y) \in \PP(\SymRR^d) \times  \PP(\SymRR^d) \,:\,
 XY = 0 \,\, {\rm and} \,\, {\rm rank}(X) = 1 \,\, {\rm and} \,\,
 {\rm rank}(Y) \leq d-1 \bigr\}.  \,\,
 \end{equation}
As in \cite[Theorem~10]{NRS}, we consider the corresponding class
  $\,[CV]\,$ in the cohomology ring
\begin{equation}
\label{eq:cohomology} H^*\bigl(\,\PP(\SymRR^d) \times  \PP(\SymRR^d),\, \ZZ \,\bigr) \, = \,
\ZZ[\,s,\,t\, ] \, / \bigl\langle \,s^{\binom{d+1}{2}}, \,t^{\binom{d+1}{2}} \,\bigr\rangle.
\end{equation}
Its coefficients are the {\em polar degrees} of the Veronese variety. By \Cref{prop:delta}, we have
\begin{equation}
[CV] \,\,\, = \,\, \, \sum_{l=1}^d 2^{l-1} \binom{d}{l} \cdot s^{\binom{d+1}{2}-l} t^l.
\label{eq:bidegreeCV}
\end{equation}
We represent $CV$ by its pullback under the Veronese map $x \mapsto X = x x^T$
on the first factor.  Thus the conormal variety equals
$\,CV \,=\, \bigl\{\, (x,Y)  \, : \, Y x=0 \,,\,
\, {\rm det}(Y) = 0 \, \bigr\}$ in $\PP^{d-1} \times
\PP(\SymRR^d) $.

We note that the following {\em boundary variety} is irreducible of codimension one in $CV$:
\begin{equation} \label{eq:BV}
\begin{aligned}
 BV  & \,=\,  \bigl\{ (X,Y) \in \PP(\SymRR^d) \times  \PP(\SymRR^d) \,:\,
 XY = 0 \, , \,\, {\rm rank}(X) = 1 \,\, {\rm and} \,\,
 {\rm rank}(Y) \leq d-2 \bigr\} \\
 & \,\simeq\,   \bigl\{\, (x,Y) \in \PP^{d-1} \times  \PP(\SymRR^d) \,\,:\,\,
 Y x \,=\, 0 \quad {\rm and} \quad {\rm rank}(Y) \leq d-2 \,\bigr\}.
\end{aligned}
\end{equation}
By the last item in \Cref{rem:theseequiv}, the algebraic boundary of $\mathcal{R}_{\bigA,b}$ is contained in~$BV$.

Let $Y = (y_{ij})$ be a symmetric $d \times d$ matrix
and $x = (x_1\;x_2\,\cdots\,x_d)^T$ a column vector.
Their entries are the variables of the polynomial ring $T = \CC[x_1,\dots,x_d,y_{11}, y_{12}, \dots,y_{dd}]$.
Subvarieties of $ \PP^{d-1} \times  \PP(\SymRR^d)$ are defined by bihomogeneous ideals in $T$.
The ideal of the conormal variety equals $I_{CV} = \langle Y x, \det(Y) \rangle$.
The ideal of the boundary variety equals  $I_{BV} = I_{CV} + \operatorname{Min}_{d-1}(Y)$.
The latter is the ideal generated by the $(d{-}1) {\times} (d{-}1)$ minors of~$Y$.

\begin{proof}[Proof of \Cref{thm:beta}]
  Let $C = (c_{ij})$ denote the {\em adjugate} of $Y$. The entry
$c_{ij}$ of this $d \times d$ matrix is the $(d{-}1) \times (d{-}1)$ minor of $Y$ complementary to $y_{ij}$.
We are interested in the divisor in the smooth variety $CV$ that is defined by the equation $c_{11} = 0$.
We claim that this divisor is the sum of  the boundary divisor $BV$ and the divisor defined by $x_1^2 = 0$.

To prove this claim, we consider the ideals
  $I := I_{CV} + \langle c_{11}\rangle$ and
  $J := I_{CV} + \operatorname{Min}_{d-1}(Y) \cdot \langle x_1^2 \rangle$   in $T$.
  It suffices to show $I = \operatorname{sat}(J)$, the saturation with
  respect to $\langle x_1,\dots,x_d\rangle$.
  Consider the $d \times (d+1)$ matrix $ ( x \;|\; C )$.
  The ideal $M :=\operatorname{Min}_2(x \;|\; C )$
  is contained in $I_{CV}$. Combining two of its generators, we find $c_{ij}x_1^2 - c_{11}x_ix_j \in M$.
  Therefore the generator $c_{ij}x_1^2$ of $J$ lies in $M + \langle\, c_{11} \,\rangle \subset I$.
  So $J\subseteq I$, and since $I$ is saturated, $\operatorname{sat}(J)\subseteq I$.
  For the reverse inclusion we need to show that $c_{11} \in \operatorname{sat}(J)$.
  This follows by noting that $ c_{11}x_k^2 - c_{kk}x_1^2 \in M$,
  and thus $c_{11} x_k^2 \in M + \operatorname{Min}_{d-1}(Y) \cdot \langle x_1^2\rangle \subset J$.
  Therefore, $I = \operatorname{sat}(J)$ and the claim follows.

We now compute the class of $BV$ in the cohomology ring (\ref{eq:cohomology}).
The minor $c_{11}$ defines a hypersurface of degree $d-1$ in $\PP(\SymRR^d)$, so its class
 is $(d-1) t$. The class of $\{x_1^2=0\}$ is twice the hyperplane class
in $\PP^{d-1}$.
It is the pullback of $[\{x_{11}=0\}]=s$ under the Veronese map into $\PP(\SymRR^d)$.
Here $x_{11}$ is the upper left entry in the matrix $X = x x^T$.
We multiply these classes with $[CV]$ as in~\eqref{eq:bidegreeCV},
and thereafter we subtract.
By the claim we proved, this gives
\begin{align*}
  [BV] &\;=\; [\,CV \,\cap\, \{ c_{11} = 0 \}]\, -\, [\,CV \cap \{x_1^2=0\}] \\
 & \;=\; \bigl(\,(d-1)t \,-\,s \,\bigr) \cdot [CV]
 \;=\; \sum_{l=2}^d   \beta(l,d) \cdot s^{\binom{d+1}{2}-l} t^{l+1},
\end{align*}
where  the coefficients of the resulting binary form are the expressions on
the right of \eqref{eq:beta}.

The following argument shows that the class $[BV]$ encodes the rank-one boundary degrees.
Suppose the cost matrix $\bigC$ travels
on a generic line in $\SymRR^d$
from the inside to the outside of the rank-one  region $\mathcal{R}_{\bigA,b}$.
For almost all points $\bigC$
on that line,  the optimal pair  $(X,Y)$ is unique. Before $\bigC$
crosses the boundary $\partial \mathcal{R}_{\bigA,b}$, the optimal pair satisfies ${\rm rank}(X) = 1$ and
${\rm rank}(Y) = d-1$. Immediately after $\bigC$ crosses
$\partial \mathcal{R}_{\bigA,b}$, we have
${\rm rank}(X) =2$ and ${\rm rank}(Y) = d-2$. At
the transition point, the optimal pair $(X,Y)$ lies in the variety $BV$.

Consider the intersection of $BV$ with the product of the
codimension-$(l-1)$ plane $\PP(\mathcal{V}^\perp)$
and the subspace $\PP(\mathcal{U}') \simeq \PP^{l+1}$
 spanned by $\bigA_1,\ldots,\bigA_l$
and the line on which $\bigC$ travels.
The points in that intersection
are the pairs $(X,Y) \in BV$ that arise as $\bigC$ travels along the line.
The number of such complex intersection points is the coefficient of
$s^{\binom{d+1}{2}-l} t^{l+1}$ in $[BV] $.

We need to argue that
the inclusion (\ref{eq:flags}) poses no restriction
on the products of subspaces we intersect with, i.e.,~for
generic flags $\mathcal{V} \subset \mathcal{U'}$
with ${\rm dim}(\mathcal{U'}/ \mathcal{V}) = 3$, all intersections
with $BV$ are transverse and reduced.
To this end, let $X_0$ be the rank-one $d \times d$ matrix with a single one in the first entry, and let $Y_0$ be the diagonal $d \times d$ matrix with two zeros followed by $d-2$ ones.
Then an affine neighborhood of
$(X_0,Y_0) $ in $ \mathbb{P}(\mathcal{S}^d) \times \mathbb{P}(\mathcal{S}^d)$
can be given as the direct sum of the spaces parametrized by
\[
\begin{pmatrix}
1 & x_{1 2} & \cdots & x_{1,d} \\
x_{1 2} & x_{2 2} & \cdots & x_{1,d} \\
\vdots & \vdots & \ddots & \vdots \\
x_{1,d} & x_{2,d} & \cdots & x_{d,d}
\end{pmatrix} \quad \text{ and } \quad
\begin{pmatrix}
y_{1 1} & y_{1 2} & y_{1 3} & \cdots & y_{1 , d-1} & y_{1,d} \\
y_{1 2} & y_{2 2} & y_{2 3} & \cdots &  y_{2 , d-1} & y_{2,d} \\
y_{1 3} & y_{2 3} & 1+y_{3 3} & \cdots & y_{3 , d-1} & y_{3,d} \\
\vdots & \vdots & \vdots & \ddots & \vdots & \vdots \\
y_{1, d-1} & y_{2, d-1} & y_{3, d-1} & \cdots & 1+y_{d-1 , d-1} & y_{d-1 , d} \\
y_{1,d} & y_{2,d} & y_{3,d} & \cdots & y_{d-1 , d}& 1
\end{pmatrix}.
\]
The linear terms in the coordinates of the matrix equation $XY=0$ are
\begin{equation}
\label{eqn:tangentSpaceGens}
y_{1 1}, \,y_{1 2},\, x_{1 3} + y_{1 3},\, \ldots,\,
x_{1,d} + y_{1,d}\ \text{ and } \ x_{2 3},\, \ldots, \,x_{d,d},
\end{equation}
for a total of
$\binom{d+1}{2}-1$ forms.
To show that the intersection described above is transverse for generic flags
$\mathcal{V} \subset \mathcal{U'}$, it suffices to find one instance for which
$BV \cap (\mathbb{P}(\mathcal{V}^\perp) \times \mathbb{P}(\mathcal{U'})) = \{(X_0,Y_0)\} $
in the neighborhood of
$(X_0,Y_0) \in \mathbb{P}(\mathcal{S}^d) \times \mathbb{P}(\mathcal{S}^d)$
defined above.

Let $\mathbb{P}(\mathcal{V}^\perp)$ be determined by the
vanishing of the $l-1$ forms $x_{1 2},\dots, x_{1\, l-1},x_{2 2}-x_{2 3}$ and $\mathbb{P}(\mathcal{U'})$ by the $\binom{d+1}{2} - l$ forms
$y_{1,l},\dots,y_{1,d},y_{2 2}+y_{2 3},y_{2 3}, \dots, y_{d-1, d}$.
Combining these forms with those in (\ref{eqn:tangentSpaceGens}), we get $2(\binom{d+1}{2}-1)$ independent linear forms.  This (highly non-generic) choice yields a transverse intersection. We conclude that the intersection
$BV \cap (\mathbb{P}(\mathcal{V}^\perp) \times \mathbb{P}(\mathcal{U'}))$ is transverse and reduced at $(X_0,Y_0)$ also for generic choices of $\mathcal{V} \subset \mathcal{U'}$.
 \end{proof}

\section{From Semidefinite to Quadratic Optimization}\label{s:3}

We now model the quadratic optimization problem \eqref{eq:qp1}
as a special case of the semidefinite program \eqref{eq:sdp1}.
To this end, we set $l=m+1$, $d=n+1$, and we use indices
that start at $0$ and run to $m$ and $n$ respectively.
Let $\bigA_0$ be the rank-one matrix $E_{00}$
whose entries are $0$ except for the entry $1$ in the upper left corner.
The following two conditions are equivalent:
\begin{equation}
\label{eq:rank1constraint}
  \bigA_0 \bullet X = 1, \,\,\rank(X) = 1 \; \,{\rm and}\; \,X \succeq 0
  \; \iff \;
  X = (1,x_1,\ldots,x_{n})^T  (1,x_1,\ldots,x_{n}).
\end{equation}
Setting $b = (1,0,\ldots,0)$ and imposing the rank constraint in \eqref{eq:rank1constraint},
our SDP in \eqref{eq:sdp1}  is equivalent to
minimizing  a quadratic function in $x$ subject to the constraints
 $\bigA_1 \bullet X = \cdots = \bigA_{m} \bullet X = 0 $.

 To apply SDP to the problem \eqref{eq:qp1}, with $m$ quadratic constraints in $n$
 variables, we set
$$
g({x}) \, \,= \,\, {x}^T {C} {x} + c^T {x}  \quad {\rm and} \quad
  f_i({x}) \,\,= \,\, {x}^T {A}_i {x} + 2 a_i^T {x} + \alpha_i
  \quad\text{ for }1\leq i \leq m.
$$
The matrices  ${C},{A}_i\in\SymRR^{n}$, the vectors $c,a_i\in\RR^{n}$, and the scalars $\alpha_i\in\RR$,
give the entries in
\begin{equation}
\label{eq:vecmat}
  \bigC := \begin{bmatrix} 0 & c^T \\ c & {C} \end{bmatrix}, \;\,
  \bigA_0 := \begin{bmatrix} 1 & 0 \\ 0 & 0 \end{bmatrix}, \;\,
  \bigA_i := \begin{bmatrix} \alpha_i & a_i^T \\ a_i & {A}_i \end{bmatrix} \,\in\, \SymRR^{d}.
\end{equation}
If we now also set
$\,X = \bigl(\begin{smallmatrix}1\\x\end{smallmatrix}\bigr) (1\; x^T) $
then \eqref{eq:qp1} is precisely the SDP \eqref{eq:sdp1}.
In other words, \eqref{eq:qp1} is equivalent to \eqref{eq:sdp1}
with the additional constraint $\,\rank(X) = 1$.
The SDP~\eqref{eq:sdp1} is called the \emph{Shor relaxation} of the quadratic program~\eqref{eq:qp1}.
We say that the relaxation is \emph{exact} if the primal optimal solution $X^*$ of the SDP is unique and has rank one.

The SDP arising as a relaxation of a quadratic program has two distinctive features:
the matrix $\bigA_0$ is the rank-one matrix $E_{00}$,
and we fix the values $b_0 {=} 1$, $c_{00} {=} b_1 {=} \cdots {=} b_m {=} 0$.
The last $m+1$ equations pose no restriction:
they can be achieved by adding multiples of $\bigA_0$ to $\bigC,\bigA_1,\ldots,\bigA_m$.
The only truly special feature of this SDP is that $\bigA_0$ has rank one.

\begin{remark} \label{rem:QP=SDP} \rm
The Shor relaxation of a quadratic optimization problem in $\RR^n$ is a semidefinite program in $\SymRR^{n+1}$ in which one constraint matrix $\bigA_0$ is rank-one.
\end{remark}

We fix the identifications in
(\ref{eq:vecmat}) throughout this section.
In particular, we will define the SDP-exact region as the restriction of the rank-one region to SDP's coming from quadratic programs.
Consider the  {\em Lagrangian} function
\begin{equation} \label{eq:lagrangian}
\mathcal{L}(\lambda,{x}) \,\, := \,\,
g(x) \,-\, \sum_{i=1}^{m} \lambda_i f_i({x}).
\end{equation}
This polynomial
is quadratic in ${x}$.
Its Hessian with respect to ${x}$ is the symmetric $n {\times} n$~matrix
\begin{equation} \label{eq:hessian}
  {\rm H}(\lambda) \,\, := \,\,
  \biggl(  \frac{\partial^2 \mathcal{L} }{ \partial x_i \partial x_j} \biggr)_{\! 1 \leq i,j \leq n}
  = \quad C  - \sum_{i=1}^m {\lambda}_i{A}_i.
\end{equation}
The entries of the matrix ${\rm H}(\lambda)$ are affine-linear in $\lambda = (\lambda_1,\ldots,\lambda_m)$.

The SDP-exact region is obtained by specializing \Cref{def:rank1sol} to the matrices in (\ref{eq:vecmat}):

\begin{definition} \label{def:rank1sol2}
  The \emph{SDP-exact region} $\mathcal{R}_{\bf f}$ is the set of all matrices $\,\mathcal{C}\in \SymRR^{n+1}$ such that
\begin{equation}
\label{eq:lagrangian1}
    {\rm H}(\lambda) \,\succ \, 0 \quad
    {\rm and}  \quad
    c - \sum_{i=1}^m {\lambda}_i a_i + {\rm H}(\lambda) {x}\, =\, 0 \qquad
    \text{for some ${x} \in V_{\bf f}$ and $\,{\lambda}\in\RR^m$.}
\end{equation}
\end{definition}

The condition \eqref{eq:lagrangian1} has a natural interpretation
in the setting of constrained optimization. It says that the
Hessian of the Lagrangian is positive definite at the
optimal solution.

\begin{remark} \label{rem:shadows} \rm
  \Cref{def:rank1sol2} expresses $\mathcal{R}_{\bf f}$
  as a union of spectrahedral shadows \cite{Sch, SS}.
    To see this, fix a point ${x}$ in $V_{\bf f}$. The
constraints \eqref{eq:lagrangian1} define a spectrahedron
${S}_{{x}}$ in the space with coordinates $({\lambda}, {C}, c)$.
The SDP-exact region for $x$ is the image of ${S}_{{x}}$
under the projection onto the coordinates $({C},c)$.
This image is a spectrahedral shadow. \Cref{def:rank1sol2}
says that $\mathcal{R}_{\bf f}$ is the union of these shadows.
We shall return to this point in \Cref{thm:unionspectrahedron}.
\end{remark}

The main result in this section is the extension of \Cref{prop:delta} and \Cref{thm:beta} to
quadratic optimization. Let $N = \binom{n+2}{2}-1$ and consider the map
$\pi : \PP^N \times  \PP^N \dashrightarrow \PP^N \times \PP^{N-1}$
that deletes the upper left entry $y_{00}$ of the matrix $Y$.
Let $CV'  = \overline{\pi(CV)} $ denote the closed image of the conormal
variety $CV$ in \eqref{eq:CV}
under the map $\pi$, and similarly let
$BV' = \overline{\pi(BV)}$ denote the closed image of
the boundary variety in \eqref{eq:BV}.
Algebraically, we compute these projected varieties by
eliminating the unknown $y_{00}$ from the
defining ideals of \eqref{eq:CV} and \eqref{eq:BV}.

\begin{proposition}
The algebraic degree of \eqref{eq:qp1}
is given by $[CV']$ in $H^*(\PP^N{\times}\PP^{N-1})$. We have
\begin{equation}
\label{eq:algdegQP0}
[ CV'] \,\, = \,\, \sum_{m=0}^{n}
2^{m} \binom{n}{m}\cdot  s^{\binom{n+2}{2}-(m+1)} t^{m} .
\end{equation}
Similarly,  the degree of $\partial_{\rm alg} \mathcal{R}_{\bf f}$
is given by the class of the projected boundary variety  $BV'$.
\end{proposition}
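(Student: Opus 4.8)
The plan is to mirror the proof of \Cref{thm:beta}, transferring everything along the rational map $\pi$ that deletes the coordinate $y_{00}$. First I would establish the formula \eqref{eq:algdegQP0} for $[CV']$. Recall from \Cref{s:2} that $CV \subset \PP^{d-1} \times \PP(\SymRR^d)$ is cut out by $Yx = 0$ and $\det(Y) = 0$, with $d = n+1$ after the identifications of \eqref{eq:vecmat}. The key observation is that the fibre of $\pi$ over a point of $CV'$ is generically a single point: once the off-diagonal and non-$(0,0)$-diagonal entries of $Y$ are fixed, the equation $Yx = 0$ (together with $x_0 \neq 0$, which holds generically on $CV$ since $X = xx^T$ has $X_{00} \neq 0$ for a generic rank-one point feasible for $\bigA_0 \bullet X = 1$) determines $y_{00}$ uniquely from the first row. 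Hence $\pi|_{CV}$ is birational onto its image, so $[CV']$ is the pushforward $\pi_*[CV]$. Because $\pi$ is the linear projection forgetting one coordinate of the second $\PP^N$, pushforward acts on the cohomology ring \eqref{eq:cohomology} by $s \mapsto s$, $t \mapsto t$, but truncating: the class $s^{\binom{d+1}{2}-l}t^l$ in $\PP^N \times \PP^N$ maps to $s^{\binom{d+1}{2}-l}t^{l-1}$ in $\PP^N \times \PP^{N-1}$ (the degree in $t$ drops by one because the generic fibre is a point, equivalently because a generic line in the source $\PP^N$ meeting the centre of projection contributes nothing). Re-indexing with $l = m+1$, $d = n+1$ in the formula \eqref{eq:bidegreeCV} for $[CV]$ gives exactly \eqref{eq:algdegQP0}. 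The identification of $[CV']$ with the algebraic degree of \eqref{eq:qp1} then follows from the dictionary in \Cref{rem:QP=SDP} together with the flag description \eqref{eq:flags}: intersecting $CV'$ with $\PP(\mathcal{V}^\perp) \times \PP(\mathcal{U}')$ of the appropriate codimensions reads off the coefficient of $s^{\binom{n+2}{2}-(m+1)}t^m$, and this counts the critical pairs $(x,Y)$ of the quadratic program.

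For the second assertion, I would show identically that $\pi|_{BV}$ is birational onto $BV'$ — the same fibre argument applies since the extra condition $\rank(Y) \leq d-2$ does not affect the recovery of $y_{00}$ from the first row of $Yx = 0$ — so $[BV'] = \pi_*[BV]$, and one can pushforward the formula for $[BV]$ obtained in the proof of \Cref{thm:beta}. Then I would argue, exactly as in that proof, that a generic cost matrix $\bigC$ travelling on a line from inside $\mathcal{R}_{\bf f}$ to outside crosses $\partial_{\rm alg}\mathcal{R}_{\bf f}$ precisely at the transition point where the optimal pair acquires $\rank(X) = 2$, $\rank(Y) = d-2$, i.e.\ lands in $BV'$ after projection; and that the number of such complex crossings equals the relevant coefficient of $[BV']$, which is therefore the degree of the hypersurface $\partial_{\rm alg}\mathcal{R}_{\bf f}$. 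The only subtlety is the genericity/transversality input: one must check that for generic flags $\mathcal{V} \subset \mathcal{U}'$ coming from a quadratic program — that is, with the constraint $b_0 = 1$, $b_1 = \cdots = b_m = 0$ and $\bigA_0 = E_{00}$ built in — all intersections with $BV'$ remain transverse and reduced. As noted in the excerpt after \Cref{rem:QP=SDP}, the fixed values of the $b_i$ and the fixed matrix $\bigA_0$ impose no real restriction (they are absorbed by adding multiples of $\bigA_0$), so the rank-one-ness of $\bigA_0$ is the only structural constraint, and the local computation at $(X_0, Y_0)$ in the proof of \Cref{thm:beta} can be repeated in the projected ambient space $\PP^N \times \PP^{N-1}$ with the coordinate $y_{00}$ simply removed from the list \eqref{eqn:tangentSpaceGens}; the remaining linear terms still span a space of the right codimension.

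The main obstacle I anticipate is verifying that the birationality of $\pi$ survives on $BV$ and — more delicately — that the explicit transversality witness of \Cref{thm:beta} still produces a transverse intersection after deleting $y_{00}$, since that coordinate appears in the list \eqref{eqn:tangentSpaceGens} of linear terms of $XY = 0$ (it is the very first entry, $y_{11}$ in the notation there, i.e.\ $y_{00}$ here). Removing a defining equation from both the variety and the complementary linear space could a priori spoil transversality, so one needs to re-examine the local dimension count: the point is that deleting $y_{00}$ drops the ambient dimension by one and simultaneously removes one equation ($y_{00} = 0$) from the constraints, so the expected dimension of the intersection is unchanged, and the explicit non-generic flags in the proof of \Cref{thm:beta} can be chosen to avoid the $y_{00}$-direction entirely. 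Once that bookkeeping is done, the degree formula for $\partial_{\rm alg}\mathcal{R}_{\bf f}$ drops out as the coefficients of $\pi_*[BV]$, which is the content needed for \Cref{thm:betaqp}.
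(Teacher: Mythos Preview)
There is a genuine numerical error at the heart of your argument. You claim that under the birational projection $\pi$ the class $[CV]$ pushes forward to $[CV']$ by the rule $s^a t^l \mapsto s^a t^{l-1}$, so that re-indexing \eqref{eq:bidegreeCV} with $l=m+1$, $d=n+1$ yields \eqref{eq:algdegQP0}. But this is false: the coefficient of $[CV]$ at $l=m+1$ is $2^{m}\binom{n+1}{m+1}$, whereas the coefficient of $[CV']$ in \eqref{eq:algdegQP0} is $2^{m}\binom{n}{m}$. For $m=1,\,n=2$ these are $6$ and $4$, matching the discrepancy between \Cref{tab:degrees} and \Cref{tab:degreesqp}. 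The reason the na\"ive pushforward fails is that the centre of projection $E_{00}$ lies on $CV$ in the second factor: the locus $\{x_0=0\}\times\{E_{00}\}$ is contained in $CV$. Consequently a linear space in $\PP^N$ through $E_{00}$ is \emph{not} in general position with respect to $CV$, and the intersection number is not the bidegree coefficient of $[CV]$; there is an excess contribution (of size $2^m\binom{n}{m+1}$, by Pascal's rule) that must be removed. The same obstruction invalidates your pushforward computation of $[BV']$ from $[BV]$---indeed the proof of \Cref{thm:betaqp} shows that the excess divisor changes from $\{x_0^2=0\}$ to $\{x_0=0\}$, so $[BV']\neq\pi_*[BV]$ in the sense you intend.

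The paper's proof avoids this trap by not computing a pushforward at all. It observes that in the QP setting the subspace $\mathcal{U}$ in \eqref{eq:flags} is forced to contain $\bigA_0=E_{00}$, so the algebraic degree of \eqref{eq:qp1} is the number of points in $CV\cap(\PP(\mathcal{V}^\perp)\times\PP(\mathcal{U}))$ with $E_{00}\in\mathcal{U}$. Projecting from $E_{00}$ turns this into the intersection of $CV'$ with a product of \emph{generic} complementary subspaces in $\PP^N\times\PP^{N-1}$, which is exactly what the bidegree of $CV'$ records. The explicit values $2^m\binom{n}{m}$ are then taken from \cite[eqn.~(3.1)]{NR} rather than derived from $[CV]$. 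The statement about $BV'$ follows by the same reasoning with $BV$ in place of $CV$.
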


\begin{proof}
The map $\pi$ is the projection from the special point $\bigA_0 = E_{00}$ in $\PP^N$.
In the proof of \Cref{thm:beta}, we intersect
$CV$ and $BV$ with products of complementary linear spaces.
The situation is  the same here, except that we now require the linear space
in the second factor to contain the point $ \bigA_0$. Thus, our
counting problem is equivalent to intersecting the projections via $\pi$
by products of generic linear spaces of complementary dimension.
The formula in \eqref{eq:algdegQP0}
is the algebraic degree of quadratic programming, which is found in
\cite[eqn.~(3.1)]{NR}.
\end{proof}

\begin{table}[h]
\begin{center} \qquad
\begin{tabular}{c|*{5}{c}}
\toprule
  \multicolumn{6}{c}{Algebraic degrees of QP}\\
\midrule
  $m\backslash n$ & 2& 3 & 4 & 5 & 6 \\
\midrule
1 & 4 & 6 & 8 & 10 & 12 \\
2 & 4 & 12 & 24 & 40 & 60 \\
3 &   & 8 & 32 & 80 & 160 \\
4 &   &   & 16 & 80 & 240 \\
5 &   &   &   & 32 & 192 \\
6 &   &   &   &   & 64 \\

 \end{tabular} \qquad \qquad
\begin{tabular}{c|*{5}{c}}
\toprule
  \multicolumn{6}{c}{Boundary degrees $\beta_{QP}(m,n)$} \\
\midrule
  $m\backslash n$ & 2& 3 & 4 & 5 & 6 \\
\midrule
 1 & 6 & 12 & 20 & 30 & 42 \\
 2 & 8 & 32 & 80 & 160 & 280 \\
 3 &   & 24 & 120 & 360 & 840 \\
 4 &   &   & 64 &  384 & 1344 \\
 5 &   &   &   & 160 & 1120 \\
 6 &   &   &   &   &  384 \\
 \end{tabular}
\vspace{-0.11in}
\end{center}
\caption{\label{tab:degreesqp} Algebraic degrees and boundary degrees  for the QP problem~\eqref{eq:qp1}.}
  \end{table}

\begin{theorem} \label{thm:betaqp}
Let $m \leq n$ and suppose that $f_1,\ldots,f_m$ are generic polynomials in $\RR[x]_{\leq 2}$.
The algebraic boundary of the SDP-exact region $\mathcal{R}_{\bf f}$ is a hypersurface whose degree~equals
\begin{equation}
\label{eq:bdrdegQP}
\beta_{QP}(m,n) \,\,\, = \,\, \, 2^m\left( n  \binom{n}{m} - \binom{n}{m+1}\right).
\end{equation}
\end{theorem}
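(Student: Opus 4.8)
The plan is to mimic the proof of \Cref{thm:beta}, but carried out on the projected varieties $CV'$ and $BV'$ rather than on $CV$ and $BV$ themselves. By \Cref{rem:QP=SDP} and the identifications in \eqref{eq:vecmat}, the SDP-exact region $\mathcal{R}_{\bf f}$ is exactly the rank-one region of the SDP \eqref{eq:sdp1} with $d = n+1$, $l = m+1$, subject to the single extra requirement that the constraint matrix $\bigA_0$ equals the fixed rank-one matrix $E_{00}$. In the flag language of \eqref{eq:flags}, this means the subspace $\mathcal{U}$ in the second factor is forced to contain the point $\bigA_0 = E_{00}$. As already observed in the proof of the preceding proposition, imposing ``$\mathcal{U}$ contains $E_{00}$'' is equivalent to projecting away the coordinate $y_{00}$: the degree of $\partial_{\rm alg}\mathcal{R}_{\bf f}$ is the appropriate coefficient of $[BV']$ in $H^*(\PP^N\times\PP^{N-1})$, where $BV' = \overline{\pi(BV)}$ and $\pi$ deletes $y_{00}$. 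So the whole task reduces to computing $[BV']$.

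The key step is therefore to push the identity $[BV] = \bigl((d-1)t - s\bigr)\cdot [CV]$ from the proof of \Cref{thm:beta} through the projection $\pi$. Because $\pi$ is the linear projection from the point $\bigA_0 = E_{00}\in\PP^N$ on the second factor only, and leaves the first factor $\PP^{d-1}$ untouched, the hyperplane class $s$ in the first factor is unaffected, while the hyperplane class $t$ on the source $\PP(\SymRR^d)$ pulls back from the hyperplane class $t$ on the target $\PP^{N-1}$. I would argue that $\pi$ restricts to a birational morphism from $CV$ onto $CV'$ (the generic fibre is a point, since a generic rank-$(d-1)$ matrix $Y$ with $Yx=0$ is determined by its image in $\PP^{N-1}$ together with $x$), and likewise from $BV$ onto $BV'$; this birationality — or more precisely, the fact that $\pi$ contracts no component and has degree one onto the image — is what lets me transport the cohomology computation. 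Concretely: $[CV']$ is given by \eqref{eq:algdegQP0}, i.e. the substitution $d = n+1$, $l = m+1$ in \eqref{eq:bidegreeCV} composed with the projection, and then
\[
[BV'] \;=\; \bigl((d-1)\,t - s\bigr)\cdot [CV'] \;=\; \bigl(n\,t - s\bigr)\cdot \sum_{m=0}^{n} 2^m\binom{n}{m}\, s^{\binom{n+2}{2}-(m+1)} t^m,
\]
and reading off the coefficient of $s^{\binom{n+2}{2}-(m+1)} t^{m+1}$ gives $2^m\bigl(n\binom{n}{m} - \binom{n}{m+1}\bigr) = \beta_{QP}(m,n)$, which is \eqref{eq:bdrdegQP}. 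The divisor identity itself (that $\{c_{11}=0\}$ on the smooth locus of $CV$ splits as $BV$ plus $\{x_1^2=0\}$) was already established in the proof of \Cref{thm:beta} and I would simply invoke it, noting that the coordinate $y_{00}$ we eliminate is disjoint from the $1,1$-block used there — one should index so that the ``special'' coordinate eliminated is not among $x_1, c_{11}$, i.e. work with $c_{ii}$ for $i\neq 0$, which is harmless since the adjugate has $d$ diagonal entries and $d = n+1\geq 2$.

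The main obstacle, and the step that needs genuine care rather than bookkeeping, is the transversality argument: one must check that intersecting $BV'$ with a product of generic complementary linear subspaces — where now the subspace in the second factor is constrained to contain the projected point, i.e. is generic only within the hyperplane complementary to the projection centre — still yields a transverse, reduced, zero-dimensional intersection whose cardinality equals the predicted cohomological number. In the proof of \Cref{thm:beta} this was done by exhibiting one explicit non-generic flag pair $\mathcal{V}\subset\mathcal{U}'$ at the distinguished point $(X_0, Y_0)$ for which the $2(\binom{d+1}{2}-1)$ linear forms are independent. I expect the analogous verification here needs the explicit choice to be compatible with the constraint $E_{00}\in\mathcal{U}'$; since $E_{00}$ corresponds precisely to the coordinate $y_{00}$ being a free direction, and $y_{00}$ does not appear among the linear tangent-space generators in \eqref{eqn:tangentSpaceGens} (those start at $y_{11}, y_{12}$), one can arrange $\mathcal{U}'$ to contain $E_{00}$ ``for free'' and reuse essentially the same independent collection of linear forms, dropping the single form involving $y_{00}$ and losing one dimension in lockstep on both sides. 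I would also note that the projection $\pi$ is only a rational map on $\PP^N\times\PP^N$ but is a morphism on the relevant locus (the centre $\{x \text{ arbitrary}\}\times\{E_{00}\}$ meets $CV$ only where $Y = E_{00}$ has rank $1$, which is outside $CV$ for $d\geq 3$), so no component of $CV$ or $BV$ is blown down and the degree comparison is legitimate. The hypothesis $m\leq n$ (equivalently $l\leq d$) guarantees, as in \Cref{thm:beta}, that the relevant Pataki range is non-trivial so that the rank jump from $1$ to $2$ actually occurs across the boundary and $\partial_{\rm alg}\mathcal{R}_{\bf f}$ is genuinely a hypersurface.
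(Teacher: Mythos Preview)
Your approach has a genuine gap, and it is masked by an arithmetic slip that happens to land on the right number.

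First, the indexing claim is backwards. You write that ``the coordinate $y_{00}$ we eliminate is disjoint from the $1,1$-block'' and propose to ``work with $c_{ii}$ for $i\neq 0$''. But the diagonal cofactor $c_{ii}$ is the determinant of the submatrix obtained by deleting row and column $i$; for every $i\neq 0$ this submatrix \emph{contains} the entry $y_{00}$. The only diagonal cofactor that survives the projection $\pi$ (elimination of $y_{00}$) is $c_{00}$, which is exactly what the paper uses.

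Second, and more seriously, the formula $[BV'] = (nt - s)\cdot [CV']$ that you write down is wrong. If you actually expand $(nt-s)\sum_{k} 2^k\binom{n}{k}s^{N-k}t^k$ and read off the coefficient of $s^{N-m}t^{m+1}$, you get
\[
n\cdot 2^m\binom{n}{m} \;-\; 2^{m+1}\binom{n}{m+1}
\;=\; 2^m\Bigl(n\binom{n}{m} - 2\binom{n}{m+1}\Bigr),
\]
which is not $\beta_{QP}(m,n)$ (check $m=1$, $n=2$: this gives $4$, while $\beta_{QP}(1,2)=6$). You dropped a factor of $2$ when extracting the coefficient.

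What actually happens is the point of the paper's argument: after eliminating $y_{00}$, one must redo the ideal computation on $CV'$ with the minor $c_{00}$, and the excess component changes from the non-reduced $\{x_0^2=0\}$ to the \emph{reduced} $\{x_0=0\}$. Consequently the correct identity is
\[
[BV'] \;=\; \bigl(nt - \tfrac{1}{2}s\bigr)\cdot [CV'],
\]
and this is what yields $\beta_{QP}(m,n)$. The drop in multiplicity is not a formality: the rational map $\pi$ has its indeterminacy locus $\{x_0=0\}\times\{E_{00}\}$ sitting inside the excess divisor, so you cannot simply push the divisor identity from $CV$ forward through $\pi$ and expect multiplicities to be preserved. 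The paper handles this by computing the elimination ideal explicitly (the Gr\"obner basis lemma and its corollary on $I_{BV}'$), then verifying directly that $I_{CV}' + \langle c_{00}\rangle$ equals the saturation of $I_{CV}' + \langle C'\rangle\cdot\langle x_0\rangle$, where $C'$ is the $0$-th column of the adjugate. That ideal-theoretic step is the missing ingredient in your proposal.
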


\Cref{tab:degreesqp} illustrates \eqref{eq:algdegQP0} and \Cref{thm:betaqp}.
It shows the algebraic degrees of quadratic programming and corresponding degrees of  rank-one boundaries.
Compare with \Cref{tab:degrees}. The diagonal entries $(m=n)$ in \Cref{tab:degreesqp}
are similar to those in the Max-Cut Problem (\Cref{ex:maxcut}),
but there is an index shift because the general objective function $g(x)$
is not homogeneous. We have $\beta_{QP}(n,n) = 2^n \cdot n$, since
the $n$ quadrics $\{f_i(x) = 0\}$ intersect in $2^n$ points,
and each of these contributes a spectrahedron of degree $n$ to the SDP-exact region.\\

For the proof we shall use polynomial ideals as in Section 2, but now
the ambient ring is $T = \CC[y_{00},y_{01},\ldots,y_{nn}, x_0,\ldots,x_n]$.
Using this variable ordering, we fix the lexicographic monomial order on $T$.
In particular, $y_{00}$ is the highest variable.
Let $I_{BV}=\operatorname{Min}_n\bigl(Y) + \langle Y x \rangle$
be the ideal generated by the $\binom{n+2}{2}$ minors
of $Y$ of size $n$ and the $n+1$ entries of vector $Y x$.

\begin{lemma}
The initial ideal ${\rm in}(I_{BV})$ is radical. It is minimally generated by
$\binom{n+2}{2} + \sum_{t=0}^{n-2} \binom{n+1}{t+1}$ squarefree monomials,
namely the leading terms of the $n \times n$ minors of $Y$,
and the monomials $\,x_t \cdot  y_{0 k_0} y_{1 k_1} \cdots y_{t k_t}\,$
where $\,t \in \{0,1,\ldots,n-2\}\,$ and $\, 0 \leq k_0 < k_1 < \cdots < k_t  \leq n$.
\end{lemma}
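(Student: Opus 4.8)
The plan is to compute a Gröbner basis of $I_{BV}$ with respect to the given lex order and verify that its initial ideal has the claimed squarefree monomial generators. First I would recall that $I_{BV} = \operatorname{Min}_n(Y) + \langle Yx\rangle$, where the generators $\det(Y) \in \operatorname{Min}_n(Y)$ is built into the smaller minors (since $I_{BV} \supseteq I_{CV}$ uses $\det Y$, but $\operatorname{Min}_n$ already subsumes it). The leading term of the $n\times n$ minor of $Y$ obtained by deleting row $i$ and column $j$ is, under lex with $y_{00}$ highest, the product of the diagonal-type entries of that minor; for the minor complementary to $y_{ij}$ this leading term is the squarefree monomial $\prod_{k\neq i,j} y_{kk}$ times possibly off-diagonal terms — more precisely one has to pin down which term of the minor's determinant expansion is lex-largest, and that is the ``staircase'' term. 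The key structural input is that the $n+1$ entries of $Yx$ can be used to reduce the vector $x$ against $Y$: the entry $(Yx)_r = \sum_k y_{rk}x_k$ has leading term $y_{0r}x_0$ for $r=0$ (and more generally the leading term picks out the lowest-indexed nonzero $y$ in row $r$ times the corresponding $x$), so S-polynomials between the $Yx$-generators and the minors produce exactly the ``mixed'' monomials $x_t\cdot y_{0k_0}y_{1k_1}\cdots y_{tk_t}$.

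The main steps, in order, are: (1) identify the leading terms of the $\binom{n+2}{2}$ size-$n$ minors of $Y$ and show they are squarefree; (2) show that the products $x_t\, y_{0k_0}y_{1k_1}\cdots y_{tk_t}$ for $t\le n-2$ and $0\le k_0<\cdots<k_t\le n$ lie in $I_{BV}$, by exhibiting them as explicit polynomial combinations — the cleanest route is the identity used already in the proof of \Cref{thm:beta}, namely $c_{ij}x_k^2 - c_{kk}x_ix_j \in \operatorname{Min}_2(x\mid C)$ and its relatives, adapted to show that a suitable minor of $(x \mid Y\text{-adjugate-like})$ data forces these monomials into the ideal; (3) check that the monomials listed generate a radical (squarefree) monomial ideal $L$ with $L \subseteq \operatorname{in}(I_{BV})$; (4) prove the reverse inclusion $\operatorname{in}(I_{BV}) \subseteq L$ by a dimension/degree count — compare the Hilbert series of $T/L$ with that of $T/I_{BV}$, using that $BV$ is irreducible of the expected dimension (codimension one in $CV$, whose codimension is known from \eqref{eq:bidegreeCV}) so the two Hilbert polynomials must agree, forcing equality of the initial ideal with $L$; and (5) count the minimal generators: $\binom{n+2}{2}$ from the minors plus, for each $t\in\{0,\dots,n-2\}$, the $\binom{n+1}{t+1}$ choices of $0\le k_0<\cdots<k_t\le n$, giving the stated total.

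I expect the main obstacle to be step (4): proving that no further monomials appear in $\operatorname{in}(I_{BV})$ beyond those in $L$, i.e.\ that the listed elements actually form a Gröbner basis rather than merely lying in the ideal. The risk is hidden S-polynomials — between two minors, between two $Yx$-entries, and between a minor and a $Yx$-entry — that could reduce to new leading terms. The safest argument avoids checking all S-pairs directly: instead one shows $L$ is squarefree (hence radical) and computes $\dim(T/L)$ and $\deg(T/L)$ combinatorially from the primary decomposition of the Stanley–Reisner ideal $L$, then matches these against the known codimension and degree of $BV$ (the latter extracted from $[BV]$ computed in \Cref{thm:beta}); since $\operatorname{in}(I_{BV})$ and $L$ have the same Hilbert polynomial and $L\subseteq \operatorname{in}(I_{BV})$ with $L$ radical, equality follows, and minimality of the generating set is then a direct inspection of the Stanley–Reisner combinatorics. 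A subtlety to watch throughout is the precise choice of lex-leading term of each $n\times n$ minor — one must verify that the ``diagonal'' term genuinely dominates, which uses that $y_{00}$ is highest and that the minor complementary to $y_{ij}$ with $i<j$ still has a well-defined staircase leading monomial; this bookkeeping is routine but must be done carefully so that the leading terms are exactly the squarefree monomials claimed.
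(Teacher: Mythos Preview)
Your proposal has two genuine gaps, one in step (2) and one in step (4).

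\textbf{Step (2) is the heart of the lemma, and your construction is missing.}  To place the monomial $x_t\,y_{0k_0}y_{1k_1}\cdots y_{tk_t}$ in $\operatorname{in}(I_{BV})$ you must exhibit an element of $I_{BV}$ of bidegree $(1,t{+}1)$ in $(x,y)$ with exactly that leading term.  The identities you cite from the proof of \Cref{thm:beta}, such as $c_{ij}x_k^2-c_{kk}x_ix_j\in\operatorname{Min}_2(x\mid C)$, live in bidegree $(2,n)$ in $(x,y)$; they involve adjugate entries (degree $n$ in $y$) and a factor quadratic in $x$.  They do not produce, nor easily ``adapt'' to, the required linear-in-$x$ elements of $y$-degree $t{+}1$ for each $t\le n{-}2$.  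The paper supplies the missing construction: for fixed $t$ and column indices $k_0<\cdots<k_t$, apply Cramer's rule to the $t\times(t{+}1)$ submatrix of $Y$ on rows $0,\ldots,t{-}1$ and those columns.  The resulting signed $t\times t$ minors $\mu_0,\ldots,\mu_t$ give a column-combination $c=\sum_j\mu_j Y_{\bullet,k_j}$ that vanishes in rows $0,\ldots,t{-}1$ (a ``circuit'' supported on the terminal segment $\{t,\ldots,n\}$).  Then $x^Tc=\sum_j\mu_j(Yx)_{k_j}\in\langle Yx\rangle\subseteq I_{BV}$, and a direct lex comparison shows its leading term is exactly $x_t\,y_{0k_0}\cdots y_{tk_t}$.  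This Cramer/circuit idea is the essential content you are missing.

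\textbf{Step (4) as stated does not close.}  Matching only $\dim(T/L)$ and $\deg(T/L)$ with those of $BV$ does not force $L=\operatorname{in}(I_{BV})$, even with $L$ radical.  For instance, $L=(xw,yw,zw)\subsetneq J=(w)$ in $k[x,y,z,w]$ has $L$ radical and both quotients of dimension~$3$ and degree~$1$.  What you would need is equality of Hilbert functions (or Hilbert polynomials together with an argument that $T/L$ has positive depth and is equidimensional), and you would need the Hilbert function of $T/I_{BV}$ from an independent source.  The class $[BV]$ computed in \Cref{thm:beta} gives only the multidegree, not the full Hilbert function.  The paper sidesteps this entirely by checking Buchberger's criterion directly: once the circuit elements are in hand, the S-pairs among them and with the $n\times n$ minors reduce to zero, so the listed set is a Gr\"obner basis and the squarefree leading terms generate $\operatorname{in}(I_{BV})$.
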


\begin{proof} It is well-known in commutative algebra that
the $n \times n$ minors of $Y$ form a reduced Gr\"obner basis. We augment these
to a reduced Gr\"obner basis for $I_{BV}$ by adding the entries of the row vector $x^T \tilde Y$ where
$\tilde Y$ is a certain matrix with $n+1$ rows and many more columns.
To construct this, we consider the $T$-module spanned by any subset of columns of $T$.
The {\em circuits} in such a submodule  of $T^{n+1}$ are the nonzero vectors with minimal support.
We consider all circuits whose support is a terminal segment $\{t,t{+}1,\ldots,n,n{+}1\}$.
The columns of $\tilde Y$ are all such circuits.
 These are formed by applying Cramer's rule to submatrices of $Y$
with row indices $0,\ldots,t-1$ and $t+1$ arbitrary columns.
The resulting entries of $x^T \tilde Y$ lie in $I_{BV}$.
They are linear in $x$, of degree $t+1$ in $Y$, and have the desired initial monomials.
One checks that their S-pairs reduce to zero, and that this Gr\"obner basis is reduced.
\end{proof}

\begin{corollary} \label{cor:happy}
The ideal $I_{BV}'$ obtained from $I_{BV}$ by eliminating the highest variable $y_{00}$ is generated
by those $n$ entries of $Y x$ and $n+1$ minors of $\,Y$ of size $n$ that do not use~$y_{00}$.
\end{corollary}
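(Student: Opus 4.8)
The plan is to read everything off the reduced Gr\"obner basis of $I_{BV}$ supplied by the preceding lemma. Since the term order is lexicographic with $y_{00}$ as the largest variable, the Elimination Theorem says that $I_{BV}'$ is generated by exactly those elements of that reduced Gr\"obner basis which do not involve $y_{00}$. So the task is to run through the basis elements --- the $n\times n$ minors of $Y$, the $n+1$ entries of $Yx$, and the higher ``$x^{T}\tilde Y$'' elements --- and in each case decide whether the variable $y_{00}$ occurs.

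For the minors and the entries of $Yx$ this is transparent. An $n\times n$ minor of $Y$ is the determinant of the submatrix obtained by deleting one row and one column, so its expansion contains $y_{00}$ exactly when both the first row and the first column of $Y$ survive, i.e.\ for the cofactors $c_{ij}$ with $i,j\ge 1$; the minors free of $y_{00}$ are therefore the $n+1$ cofactors $c_{00},c_{01},\dots,c_{0n}$ forming the first row of the adjugate. Likewise, among the entries of $Yx$ only $(Yx)_{0}=y_{00}x_{0}+\sum_{j\ge 1}y_{0j}x_{j}$ involves $y_{00}$, so $(Yx)_{1},\dots,(Yx)_{n}$ are free of it. All of these lie in $I_{BV}$ and are polynomials in the variables other than $y_{00}$, so $\langle (Yx)_{1},\dots,(Yx)_{n},\,c_{00},\dots,c_{0n}\rangle\subseteq I_{BV}'$.

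It remains to see that the higher ``$x^{T}\tilde Y$'' basis elements --- some of which turn out to be free of $y_{00}$ --- contribute nothing new, i.e.\ already lie in $\langle (Yx)_{1},\dots,(Yx)_{n}\rangle$. First a degree count: by the preceding lemma each such element is bihomogeneous of degree $1$ in $x$ and degree $t+1\le n-1$ in $Y$, whereas $\operatorname{Min}_{n}(Y)$ is generated in $Y$-degree $n$; hence in these low $Y$-degrees the ideal $I_{BV}=\langle Yx\rangle+\operatorname{Min}_{n}(Y)$ coincides with $\langle Yx\rangle$, so every $x^{T}\tilde Y$ element lies in $\langle Yx\rangle=\langle (Yx)_{0},\dots,(Yx)_{n}\rangle$. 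The step I expect to be the real obstacle is then the identity
\[
\langle Yx\rangle\cap\CC[\,y_{01},\dots,y_{nn},x_{0},\dots,x_{n}\,]\;=\;\langle (Yx)_{1},\dots,(Yx)_{n}\rangle ,
\]
which says that a $y_{00}$-free member of $\langle Yx\rangle$ already lies in the ideal of the last $n$ entries. To prove it I would regard $T$ as the polynomial ring $R'[y_{00}]$ over $R'=\CC[y_{01},\dots,y_{nn},x_{0},\dots,x_{n}]$, note that $(Yx)_{0}$ has $y_{00}$-degree $1$ with leading coefficient $x_{0}$, and verify that $(Yx)_{1},\dots,(Yx)_{n}$ is a regular sequence in $T$ --- its zero locus $\{Y'x=0\}$, with $Y'$ the bottom $n$ rows of $Y$, is irreducible of the expected codimension $n$ --- so that $x_{0}$ is a non-zerodivisor modulo it. Then in the polynomial ring $\bar R'[y_{00}]$, with $\bar R'=R'/\langle (Yx)_{1},\dots,(Yx)_{n}\rangle$, the class of $(Yx)_{0}$ has $y_{00}$-degree $1$ with a non-zerodivisor as leading coefficient, hence it cannot divide a nonzero element of degree $0$; since the class of a $y_{00}$-free $g\in\langle Yx\rangle$ is both such a multiple and an element of $\bar R'$, it must vanish, i.e.\ $g\in\langle (Yx)_{1},\dots,(Yx)_{n}\rangle$. (The same redundancy can alternatively be extracted from the Cramer's-rule construction in the proof of the lemma.) Putting the three families together yields $I_{BV}'=\langle (Yx)_{1},\dots,(Yx)_{n},\,c_{00},\dots,c_{0n}\rangle$, as claimed.
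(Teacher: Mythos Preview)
Your argument is correct and follows the same overall route as the paper: apply the Elimination Theorem to the lexicographic Gr\"obner basis of the preceding lemma and sort its elements according to whether $y_{00}$ appears. The identification of the $y_{00}$-free minors and the $y_{00}$-free entries of $Yx$ is exactly as in the paper.

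The one place where you work harder than the paper is the treatment of the ``extra'' $y_{00}$-free members of the $x^{T}\tilde Y$ family (those with $t\ge 1$ and $k_{0}\ge 1$). You first argue by bidegree that they lie in $\langle Yx\rangle$, and then prove the elimination identity $\langle Yx\rangle\cap R'=\langle (Yx)_{1},\dots,(Yx)_{n}\rangle$ via a regular-sequence / non-zerodivisor argument. This is valid; the irreducibility claim for $\{Y'x=0\}$ holds because the $n$ linear forms $(Y'x)_{i}$ in the $y$-variables are independent for every $x\neq 0$, so the scheme is a rank-$n$ bundle off $\{x=0\}$ and, being Cohen--Macaulay and generically reduced, is integral. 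The paper bypasses all of this: from the Cramer construction in the lemma one has $x^{T}v=\sum_{j}\pm m_{j}\,(Yx)_{k_{j}}$, where the $m_{j}$ are minors of the submatrix with rows $0,\dots,t-1$ and columns $k_{0},\dots,k_{t}$. When the leading monomial is $y_{00}$-free one has $k_{0}\ge 1$, so every $k_{j}\ge 1$ and every coefficient $m_{j}$ avoids $y_{00}$; thus the element already sits in $\langle (Yx)_{1},\dots,(Yx)_{n}\rangle$. Your parenthetical remark anticipates exactly this shortcut, so you could replace the regular-sequence paragraph by that single observation.
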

\begin{proof}
The elimination ideal $I'_{BV}$ is generated by elements of the lexicographic Gr\"obner basis
 that do not contain $y_{00}$. These are elements whose leading monomials
do not contain $y_{00}$. Each of these is a polynomial linear combination of
the above $2n+1$ generators of $I_{BV}$.
\end{proof}

\begin{proof}[Proof of \Cref{thm:betaqp}]
Let $N = \binom{n+2}{2}-1$.
As in the proof of \Cref{thm:beta}, we identify $CV$ with its
preimage in $\PP^{n} \times \PP^N$, that is,
 $ CV = \{ (x,Y) \;|\; Y x=0, \rank(Y) \leq n\}$.
   Its image $CV'$ under $\pi$ lives in $\PP^{n} \times \PP^{N-1}$.
The boundary $BV'$ is the projection of $BV$ into
 $\PP^{n} \times \PP^{N-1}$.

In \Cref{thm:beta}, the boundary was found by
intersecting $CV$ with the divisor given by the minor $c_{00}$ of $Y$,
and by removing the non-reduced excess component $\{x_0^2 = 0\}$.
In the present case, we still have that excess component, but it is reduced,
given by $x_0 = 0$.
The class $[\{x_0=0\}]$ is half of the pullback
of the hyperplane class $s$ of $\PP^{n}$.
Using~\eqref{eq:algdegQP0}, this~implies
\[ [BV'] \,=\,  \bigl( \,- \frac{1}{2}s \,+ \, n t \,\bigr) \, [CV']   \,\, = \,\,
  \sum_{m=1}^{n} \beta_{QP}(m,n) \cdot  s^{\binom{n}{2}-(m+1)} t^{m+1} . \]
The coefficients $\beta_{QP}(m,n)$ of this binary form are the combinatorial expressions in \eqref{eq:bdrdegQP}.

To see that the excess component is now  $\{ x_0 = 0\}$, we argue as follows.
Let $C'=(c_{0j})$ be the leftmost column of the adjugate matrix of $Y$.
Consider the ideals $I' := I_{CV}' + \langle c_{00} \rangle$ and
$J':= I_{CV}' + \langle C' \rangle \cdot \langle x_0 \rangle$.
We claim that $I'=\operatorname{sat}(J')$.
Observe that the $(n{+}1) \times 2$ matrix $ \bigl( \,x \,| \,C' \bigr)$ satisfies $\operatorname{Min}_2\bigl(\,x\,|\,C'\bigr) \subset I_{CV}'$.
This implies $c_{0j} x_0 \in J'$ for all $j \geq 1$.
Then $J' \subseteq I'$ and since $I'$ is saturated, $\operatorname{sat}(J') \subseteq I'$.  The reverse inclusion is implied by $c_{00} \in \operatorname{sat}(J')$, which follows from the fact that $c_{00}x_j \in \operatorname{Min}_2\bigl(\,x\,|\,C'\bigr) + \langle C' \rangle \cdot \langle x_0 \rangle$.
By \Cref{cor:happy}, the elimination ideal  is $I_{BV}' = I_{CV}' + \langle C' \rangle$.
So we may conclude that $CV' \cap \{c_{00}\} = BV' \cup (CV' \cap \{x_0=0\})$.
\end{proof}

\section{Bundles of Spectrahedral Shadows}\label{s:4}

We fix $\mathbf{f}=(f_1,\dots,f_m)$ as before.
For any $u\in \RR^n$ we consider the following two problems:
\begin{itemize}
  \item {\bf Linear Objective (Lin):} \ \ \  Minimize $u^T x$ subject to $x\in V_{\bf f}$. \vspace{-0.1in}
  \item {\bf Euclidean Distance (ED):} Minimize $\|x-u\|^2$ subject to $x\in V_{\bf f}$.
\end{itemize}
These problems are special instances of the quadratic program \eqref{eq:qp1}, with the cost matrices
\begin{equation} \label{eq:distanceC}
  \bigC^{\it lin}_u =
  \text{ \begin{footnotesize}\(
  \begin{pmatrix}
  0 & u_1 & u_2 &  \cdots & u_{n} \, \\
  u_1 & 0 & 0 & \cdots & 0 \\
  u_2 & 0 & 0 &  \cdots & 0 \\
  \vdots &  \vdots  & \vdots & \ddots & \vdots \\
  u_{n} & 0 & 0 & \cdots & 0 \smallskip \\
  \end{pmatrix}
  \)\end{footnotesize} }
  \qquad {\rm and} \qquad
  \bigC^{\it ed}_u =
  \text{ \begin{footnotesize}\(
  \begin{pmatrix}
  0 & -u_1 & -u_2 &  \cdots & -u_{n} \, \\
  -u_1 & 1 & 0 & \cdots & 0 \\
  -u_2 & 0 & 1 &  \cdots & 0 \\
  \vdots &  \vdots  & \vdots & \ddots & \vdots \\
  -u_{n} & 0 & 0 & \cdots & 1 \smallskip \\
  \end{pmatrix}.
  \)\end{footnotesize} }
\end{equation}
We write $\mathcal{R}^{\it lin}_{\bf f}$ and $ \mathcal{R}^{\it ed}_{\bf f} $ for the SDP-exact regions in $\RR^n$ of these two problems.
They are the intersections of $\mathcal{R}_{\bf f}$ with the affine subspaces
of $\mathcal{S}^{n+1}$ given in \eqref{eq:distanceC}.
The punchline of this section is that both regions are {\em normal bundles of spectrahedral shadows} over $V_{\bf f}$.
Namely, we shall write $\mathcal{R}^{\it lin}_{\bf f}$ and $\mathcal{R}^{\it ed}_{\bf f}$ as a union of
spectrahedral shadows, one for each point ${x} \in V_{\bf f}$.

The lower right block of $\bigC^{\it lin}$ and $\bigC^{\it ed}$ is independent of~$u$,
and thus the Hessian matrix ${\rm H}(\lambda)$ is independent of~$u$.
The spectrahedron defined by the constraint  $ {\rm H}(\lambda) \succ 0 $ is as follows:
\begin{align} \label{eq:masterspec}
  {\rm S}^{\it lin}_{\bf f} =
  \biggl\{ \, {\lambda}\in\RR^{m} \,:\, \sum_{i=1}^m \lambda_i A_i \,\prec \,0 \,\biggr\}
  \qquad {\rm and} \qquad
  {\rm S}^{\it ed}_{\bf f} =
  \biggl\{ \, {\lambda}\in\RR^{m} \,:\, \sum_{i=1}^m \lambda_i A_i \,\prec \,\id_n \,\biggr\}.
\end{align}
The sets in \eqref{eq:masterspec} are called \emph{master spectrahedra}.
Observe that ${\rm S}^{\it lin}_{\bf f}$ is a cone in~$\RR^{m}$.
Also note that ${\rm S}^{\it ed}_{\bf f}$ is full-dimensional because
$\lambda = (0,\ldots,0)$ is an interior point.
Let ${\rm Jac}_{\bf f}$ denote the Jacobian matrix of ${\bf f}$.
This matrix has format $n \times m$, and
its entry in row $ i$ and column $j$ is the linear polynomial $\partial f_j/\partial x_i$.
At any point $x \in V_{\bf f}$, the specialized Jacobian matrix ${\rm Jac}_{\bf f}(x)$ defines a
linear map $\RR^m \to \RR^n$, whose range is the \emph{normal space} of the
variety $V_{\bf f}$ at~$x$.
We consider all the images of the respective master spectrahedra under these linear maps.

\begin{theorem}\label{thm:unionspectrahedron}
  The SDP-exact regions for {\bf (Lin)} and {\bf (ED)} are comprised of
  the images of the corresponding master spectrahedra in the normal spaces of the variety $V_{\bf f}$.
  To be precise,
  \begin{equation*} 
    \mathcal{R}_{\bf f}^{\it lin} =
    \bigcup_{{x} \in V_{\bf f}} \bigl( \tfrac{1}{2} \,{\rm Jac}_{\bf f}({x}) \cdot \mathrm{S}^{\it lin}_{\bf f} \, \bigr)
    \qquad {\rm and} \qquad
    \mathcal{R}_{\bf f}^{\it ed} =
    \bigcup_{{x} \in V_{\bf f}} \bigl(\, x - \tfrac{1}{2}\,{\rm Jac}_{\bf f}({x}) \cdot \mathrm{S}^{\it ed}_{\bf f} \, \bigr).
  \end{equation*}
  Moreover, the above unions are disjoint because our spectrahedra are
  relatively open.
\end{theorem}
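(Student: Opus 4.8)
The plan is to unwind \Cref{def:rank1sol2} for the two specific cost matrices $\bigC^{\it lin}_u$ and $\bigC^{\it ed}_u$ in \eqref{eq:distanceC}, and to show that the membership condition factors through a point $x\in V_{\bf f}$ together with a choice of $\lambda$ in the appropriate master spectrahedron. Concretely, a matrix $\bigC^{\it ed}_u$ lies in $\mathcal{R}_{\bf f}$ iff there exist $x\in V_{\bf f}$ and $\lambda\in\RR^m$ with ${\rm H}(\lambda)\succ 0$ and the stationarity equation $c-\sum_i\lambda_i a_i+{\rm H}(\lambda)x=0$. For the ED cost we have $C=\id_n$ and $c=-u$, so ${\rm H}(\lambda)=\id_n-\sum_i\lambda_i A_i$; the positive-definiteness condition is exactly $\lambda\in{\rm S}^{\it ed}_{\bf f}$ of \eqref{eq:masterspec}. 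For the linear cost we have $C=0$ and $c=u$, so ${\rm H}(\lambda)=-\sum_i\lambda_i A_i$, matching ${\rm S}^{\it lin}_{\bf f}$. This is the first step: identify the spectrahedral constraint on $\lambda$ with the master spectrahedron.

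The second step is to solve the stationarity equation for $u$ rather than for $x$, and recognize the answer as a point in the normal space. For the ED problem, the gradient of $f_i$ at $x$ is $\nabla f_i(x)=2(A_i x+a_i)$, so the columns of ${\rm Jac}_{\bf f}(x)$ are $2(A_ix+a_i)$. Rewriting $c-\sum_i\lambda_i a_i+{\rm H}(\lambda)x = -u - \sum_i\lambda_i a_i + (\id_n-\sum_i\lambda_iA_i)x = x-u-\sum_i\lambda_i(A_ix+a_i) = x-u-\tfrac12\,{\rm Jac}_{\bf f}(x)\lambda$, we see the equation says precisely $u = x-\tfrac12\,{\rm Jac}_{\bf f}(x)\lambda$. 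Thus $\bigC^{\it ed}_u\in\mathcal{R}_{\bf f}$ iff $u$ lies in $x-\tfrac12\,{\rm Jac}_{\bf f}(x)\cdot{\rm S}^{\it ed}_{\bf f}$ for some $x\in V_{\bf f}$, which is the claimed union. The linear case is the same computation with the constant term $x$ dropped: $c-\sum_i\lambda_i a_i+{\rm H}(\lambda)x = u-\sum_i\lambda_i(A_ix+a_i) = u-\tfrac12\,{\rm Jac}_{\bf f}(x)\lambda$, giving $u=\tfrac12\,{\rm Jac}_{\bf f}(x)\lambda$, i.e.\ $u\in\tfrac12\,{\rm Jac}_{\bf f}(x)\cdot{\rm S}^{\it lin}_{\bf f}$.

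The third step is disjointness. If $u$ were in two of the pieces, say $u=x-\tfrac12\,{\rm Jac}_{\bf f}(x)\lambda = x'-\tfrac12\,{\rm Jac}_{\bf f}(x')\lambda'$ with $x\neq x'$ in $V_{\bf f}$, then $x$ and $x'$ would both be critical points of $\|\cdot-u\|^2$ on $V_{\bf f}$ at which the Hessian of the Lagrangian is positive \emph{definite}. But by \Cref{rem:theseequiv} (strict complementarity and uniqueness of the rank-one optimal $X$), for a cost matrix in the rank-one region the optimal primal $X$ — hence the optimal $x$ — is unique; the strict inequality ${\rm H}(\lambda)\succ 0$, rather than $\succeq 0$, is exactly what guarantees we land in the interior of the rank-one region where uniqueness holds, so $x=x'$. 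I would phrase this directly: the condition ${\rm H}(\lambda)\succ 0$ forces the SDP solution $X=\binom{1}{x}(1\ x^T)$ to be the \emph{unique} optimum, so two different $x$'s cannot produce the same $u$. The same argument, applied to the linear cost, gives disjointness there. The main obstacle is being careful that "relatively open" spectrahedra plus strict complementarity genuinely yield uniqueness of the minimizer: one must check that ${\rm H}(\lambda)\succ 0$ implies not merely that $x$ is \emph{a} KKT point but that the corresponding $(X,Y)$ is a strictly complementary optimal pair, so that the third bullet of \Cref{rem:theseequiv} applies and pins down $x$. Everything else is the linear-algebra bookkeeping above.
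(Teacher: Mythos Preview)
Your proposal is correct and follows exactly the approach the paper takes: substitute the cost matrices \eqref{eq:distanceC} into \Cref{def:rank1sol2}, identify the Hessian constraint with the master spectrahedron, and solve the stationarity equation for $u$ to obtain the Jacobian expression. The paper compresses all of this into a single sentence, so your detailed unwinding is a faithful expansion.

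One small caution on disjointness: your first instinct to cite \Cref{rem:theseequiv} is not quite legitimate, since that remark assumes generic $\bigA$ and $b$, whereas here $\bigA_0=E_{00}$ has rank one. Your ``direct'' rephrasing is the right fix and matches the paper's one-line justification: if ${\rm H}(\lambda)\succ 0$, then the dual matrix $Y$ (with $\lambda_0$ chosen via the Schur complement) is positive semidefinite of rank exactly $n=d-1$; complementary slackness then forces any primal optimum into the one-dimensional kernel of $Y$, and the constraint $\bigA_0\bullet X=1$ pins down $X$, hence $x$, uniquely. Use that argument rather than the genericity remark.
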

\begin{proof}
  The result follows by substituting \eqref{eq:distanceC} into
  \Cref{def:rank1sol2}.  Disjointness holds because any $u$ in one of the
  parenthesized sets has the associated $x$ as its  unique optimal solution.
  \end{proof}

One consequence of \Cref{thm:unionspectrahedron} is that the SDP-exact region for an ED~problem is always full-dimensional.
This fact was observed in~\cite{CAPT}, where it was shown to have interesting applications
in computer vision, tensor approximation and rotation synchronization.

\begin{corollary}
  If $x $ is a regular point of $V_{\bf f}$,
  then $\mathcal{R}^{\it ed}_{\bf f}$ contains an open neighborhood of $x$.
\end{corollary}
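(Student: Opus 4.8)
The plan is to deduce this directly from the parametrization in \Cref{thm:unionspectrahedron}, using the inverse function theorem in its submersion form. Write $x_0$ for the given regular point and set $d = \dim_{x_0} V_{\bf f}$. Since $x_0$ is a regular point, a neighborhood of $x_0$ in $V_{\bf f}$ is a smooth $d$-manifold, so there is an open set $W\subseteq\RR^d$ containing the origin and a smooth parametrization $\varphi\colon W\to V_{\bf f}$ with $\varphi(0)=x_0$ and $d\varphi_0$ an isomorphism onto the tangent space $T_{x_0}V_{\bf f}$. Recall also from \eqref{eq:masterspec} that $\lambda=0$ lies in (the interior of) the master spectrahedron $\mathrm{S}^{\it ed}_{\bf f}$, which is open in $\RR^m$ because it is cut out by a strict inequality; fix an open ball $B\subseteq\RR^m$ around the origin with $B\subseteq\mathrm{S}^{\it ed}_{\bf f}$.

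Next I would assemble these into a single smooth map
\[
  \Phi\colon W\times B\longrightarrow\RR^n,\qquad
  \Phi(w,\lambda)\;=\;\varphi(w)\,-\,\tfrac12\,{\rm Jac}_{\bf f}\bigl(\varphi(w)\bigr)\,\lambda .
\]
By \Cref{thm:unionspectrahedron}, the image $\Phi(W\times B)$ is contained in $\mathcal{R}^{\it ed}_{\bf f}$, so it suffices to show that this image contains an open neighborhood of $x_0=\Phi(0,0)$. For that I compute the differential at $(0,0)$. The $w$-derivative of $\tfrac12\,{\rm Jac}_{\bf f}(\varphi(w))\,\lambda$ vanishes at $\lambda=0$, so $d\Phi_{(0,0)}(\dot w,\dot\lambda)=d\varphi_0(\dot w)-\tfrac12\,{\rm Jac}_{\bf f}(x_0)\,\dot\lambda$. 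Its image is therefore $T_{x_0}V_{\bf f}+\mathrm{range}\,{\rm Jac}_{\bf f}(x_0)$, which is all of $\RR^n$: at a regular point the range of ${\rm Jac}_{\bf f}(x_0)$ is the normal space of $V_{\bf f}$ at $x_0$, which is complementary to $T_{x_0}V_{\bf f}$ (equivalently, $\nabla f_1(x_0),\dots,\nabla f_m(x_0)$ span the orthogonal complement of $T_{x_0}V_{\bf f}$). Hence $\Phi$ is a submersion at $(0,0)$, and since a smooth map whose differential is surjective at a point is an open map near that point, $\Phi(W\times B)$ contains an open neighborhood of $\Phi(0,0)=x_0$, as desired.

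The only step needing care is the identification of $\mathrm{range}\,{\rm Jac}_{\bf f}(x_0)$ with the normal space and its complementarity with the tangent space, which is precisely what regularity of $x_0$ supplies via the implicit function theorem; everything else is a routine differential-calculus computation. I note that in the complete-intersection case $d=n-m$ one can avoid the submersion version altogether, since then $d\Phi_{(0,0)}$ is a linear isomorphism and the plain inverse function theorem applies; the submersion argument above is simply the uniform way to handle a general regular point.
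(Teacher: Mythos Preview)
Your proof is correct and follows essentially the same approach as the paper's: both invoke \Cref{thm:unionspectrahedron} together with the regularity hypothesis to see that the shadows fill out an open set near $x_0$. The paper's two-line argument just notes that regularity makes each shadow ${\rm Jac}_{\bf f}(z)\cdot{\rm S}^{\it ed}_{\bf f}$ full-dimensional in the normal space at nearby points $z$, leaving the open-mapping conclusion implicit; your version makes this precise via the submersion form of the inverse function theorem.
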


\begin{proof}
  The regularity hypothesis means that
  $\rank({\rm Jac}_{\bf f}(x))= \codim_x(V_{\bf f})$.
  This ensures that  ${\rm Jac}_{\bf f}({z})\cdot {\rm S}^{\it ed}_{\bf f}$ is full-dimensional in the normal space of $V_{\bf f}$
  at any point $z$  close to~$x$.
\end{proof}

For finite complete intersections, the SDP-exact regions are finite unions of spectrahedra:

\begin{corollary}\label{thm:finitevariety0}
  Let ${\bf f}=(f_1,\dots,f_n)$ be a complete intersection with $k\leq 2^n$ real points.
  Then
  \begin{itemize}
    \item[(a)] $\mathcal{R}^{\it lin}_{\bf f}$ consists of $k$ spectrahedral cones, each of them
    isomorphic to the master~${\rm S}^{\it lin}_{\bf f}$. \vspace{-0.05in}
        \item[(b)] $\mathcal{R}^{\it ed}_{\bf f}$ consists of $k$ full-dimensional spectrahedra,
    each  isomorphic to the master~${\rm S}^{\it ed}_{\bf f}$.
  \end{itemize}
\end{corollary}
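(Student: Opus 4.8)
The plan is to read off both statements from \Cref{thm:unionspectrahedron}; the only extra input needed is that the Jacobian of ${\bf f}$ is invertible at every point of $V_{\bf f}$.

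First I would pin down the shape of $V_{\bf f}$. Since ${\bf f}=(f_1,\dots,f_n)$ is a complete intersection of $n$ quadrics in $n$ variables, $V_{\bf f}$ is zero-dimensional, and by B\'ezout it accounts for $2^n$ points over $\CC$. I read the hypothesis ``complete intersection with $k\le 2^n$ real points'' as saying that these $2^n$ points are distinct (equivalently, $V_{\bf f}$ is reduced), so that $k$ of them are real and each is a smooth point with $\codim_x(V_{\bf f})=n$. Hence ${\rm Jac}_{\bf f}(x)$ is a square $n\times n$ matrix of full rank for every $x\in V_{\bf f}$; in particular, since here $m=n$, the map $\tfrac12\,{\rm Jac}_{\bf f}(x)\colon\RR^n\to\RR^n$ is a linear isomorphism. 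This is the one place the complete-intersection assumption enters, and the only step that needs an argument (via the Jacobian criterion); everything afterwards is formal.

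Next I would invoke \Cref{thm:unionspectrahedron}, which expresses
\[
\mathcal{R}^{\it lin}_{\bf f}=\bigcup_{x\in V_{\bf f}}\bigl(\tfrac12\,{\rm Jac}_{\bf f}(x)\cdot{\rm S}^{\it lin}_{\bf f}\bigr)
\qquad\text{and}\qquad
\mathcal{R}^{\it ed}_{\bf f}=\bigcup_{x\in V_{\bf f}}\bigl(x-\tfrac12\,{\rm Jac}_{\bf f}(x)\cdot{\rm S}^{\it ed}_{\bf f}\bigr),
\]
as \emph{disjoint} unions. Because $V_{\bf f}$ has exactly $k$ real points, each union has $k$ summands. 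For part (a): each summand is the image of the master cone ${\rm S}^{\it lin}_{\bf f}$ under the invertible linear map $\tfrac12\,{\rm Jac}_{\bf f}(x)$, hence a spectrahedral cone linearly isomorphic to ${\rm S}^{\it lin}_{\bf f}$. For part (b): each summand is the image of the full-dimensional master ${\rm S}^{\it ed}_{\bf f}$ under the invertible affine map $\lambda\mapsto x-\tfrac12\,{\rm Jac}_{\bf f}(x)\lambda$, hence a full-dimensional spectrahedron affinely isomorphic to ${\rm S}^{\it ed}_{\bf f}$. I would note in passing that an invertible affine image of a spectrahedron, resp. of a spectrahedral cone, is again a spectrahedron, resp. a spectrahedral cone, of the same isomorphism type; this is immediate from the defining linear matrix inequality. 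Since the $k$ summands are pairwise disjoint by the last sentence of \Cref{thm:unionspectrahedron}, each region ``consists of'' precisely these $k$ pieces.

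I do not anticipate a genuine obstacle: the entire content sits in \Cref{thm:unionspectrahedron} together with the triviality about affine images of spectrahedra. The only delicate point is the regularity of the points of $V_{\bf f}$ in the first paragraph, which is exactly why the hypothesis is phrased in terms of a complete intersection carrying its full complement of reduced points.
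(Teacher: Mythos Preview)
Your proposal is correct and follows essentially the same route as the paper: invoke \Cref{thm:unionspectrahedron} and observe that, because $m=n$ and each point of $V_{\bf f}$ is regular, the Jacobian ${\rm Jac}_{\bf f}(x)$ is an invertible linear map, so each spectrahedral shadow is in fact a spectrahedron linearly (resp.\ affinely) isomorphic to the master. The paper's proof is a one-line version of exactly this argument; your write-up is more explicit about why the Jacobian has full rank (via reducedness of the complete intersection), which is a point the paper leaves implicit.
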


\begin{proof}
The linear map ${\rm Jac}_{\bf f}({x})$ is injective and hence invertible on its image.
Therefore, the spectrahedral shadow ${\rm Jac}_{\bf f}({x})\cdot {\rm S}_{\bf f}$ is  actually a spectrahedron,
 linearly isomorphic to ${\rm S}_{\bf f}$.
\end{proof}

\begin{figure}[htb]
 \centering
 \includegraphics[height=130pt]{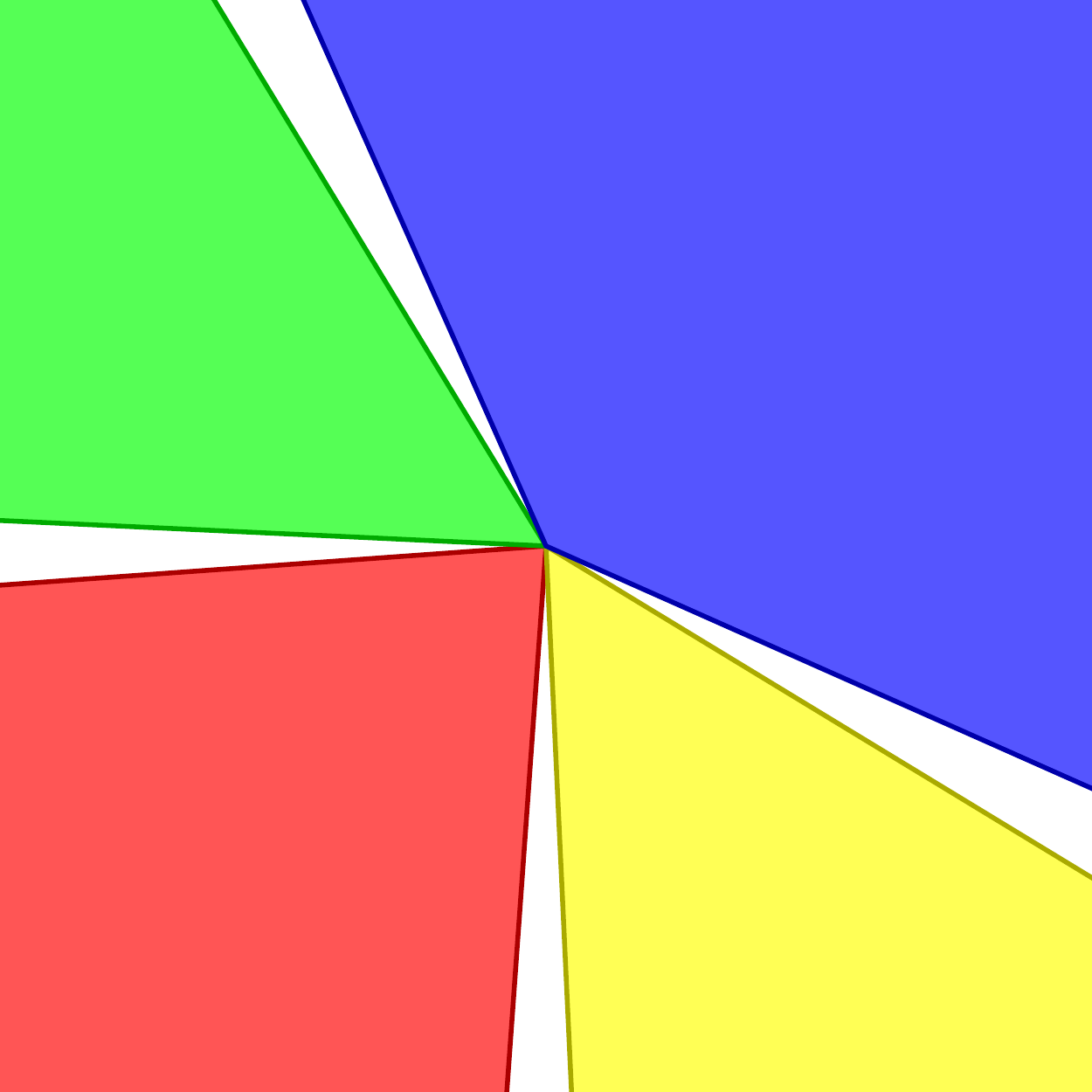}
 \hspace{60pt} \quad
 \includegraphics[height=130pt]{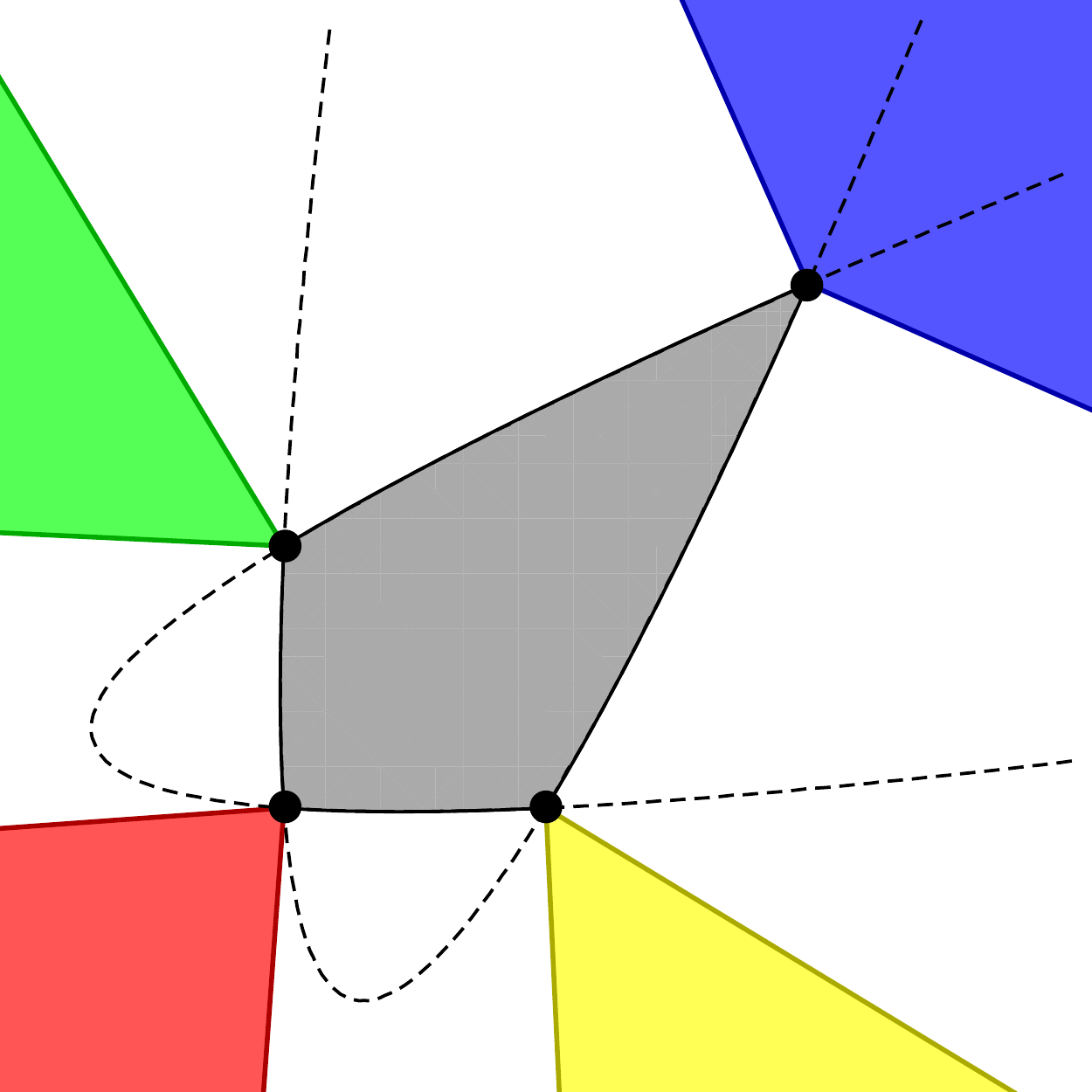}
 \caption{   We consider two quadrics in $\RR^2$ that meet in four points.
   The SDP-exact region for minimizing linear functions
   over this intersection consists of four cones, shown in the left.
   These are the normal cones at the first theta body ${\rm TH}_1({\bf f})$, as illustrated in the right.
   \label{fig:thetabody}}
\end{figure}

The spectrahedral cones in $\mathcal{R}^{\it lin}_{\bf f}$ are tightly connected to the first {\em theta body} of $\langle {\bf f}\rangle$, denoted ${\rm TH}_1({\bf f})$, introduced by Gouveia et al.~in \cite{GPT}.
The theta bodies of ${\bf f}$ are tractable approximations to the convex hull of~$V_{\bf f}$, whose construction relies on the Lasserre hierarchy~\cite{BPT,Lasserre2010,Lasserre2001}.
Later in this section we will show that $\mathcal{R}^{\it lin}_{\bf f}$ consists of the normal cones of~${\rm TH}_1({\bf f})$.

\begin{example}[$m=n=2$]\label{ex:fourpoints} \rm
Consider two quadrics in two variables such that $V_{\bf f}$
consists of four points in convex position in $\RR^2$. The region
 $\mathcal{R}^{\it ed}_{\bf f}$ was illustrated in  \Cref{fig:voronoi2d}.
  The region $\mathcal{R}^{\it lin}_{\bf f}$ consists of four cones
  that sit inside the normal cones at the quadrilateral ${\rm conv}(V_{\bf f})$.
We explain this for the specific instance examined in \cite[Example~5.6]{GPT}:
  \begin{align*}
   {\bf f}  =
   (x_1 x_2-2 x_2^2+2 x_2,\, x_1^2-x_2^2-x_1+x_2),\qquad
  V_{\bf f} =
    \{\, (0,0)\,,\,(0,1)\,,\,(1,0)\,,\,(2,2) \,\} .
  \end{align*}
  The first  theta body ${\rm TH}_1({\bf f})$ is seen in \cite[Figure~3]{GPT}.
  Our rendition in  \Cref{fig:thetabody} show also the SDP-exact region $\mathcal{R}^{\it lin}_{\bf f}$.
  It consists of the normal cones of ${\rm TH}_1({\bf f})$
  at the four points in $V_{\bf f}$.
  For more details see \Cref{thm:thetabody}.
\end{example}

It is interesting to examine \Cref{thm:finitevariety0} (b) when
$m=n$ and $V_{\bf f}$ consists of $2^n$ real points.
We know that $\mathcal{R}^{\it ed}_{\bf f}$ consists of $2^n$ full-dimensional spectrahedra
of degree $n$. We show that these hypersurfaces are pairwise tangent, and also tangent
to the walls of the Voronoi diagram. The case $n=2$ was seen in  \Cref{fig:voronoi2d},
whereas the case $n=3$ is shown in \Cref{fig:somosas}.

\begin{figure}[htb]
 \centering
 \includegraphics[width=250pt]{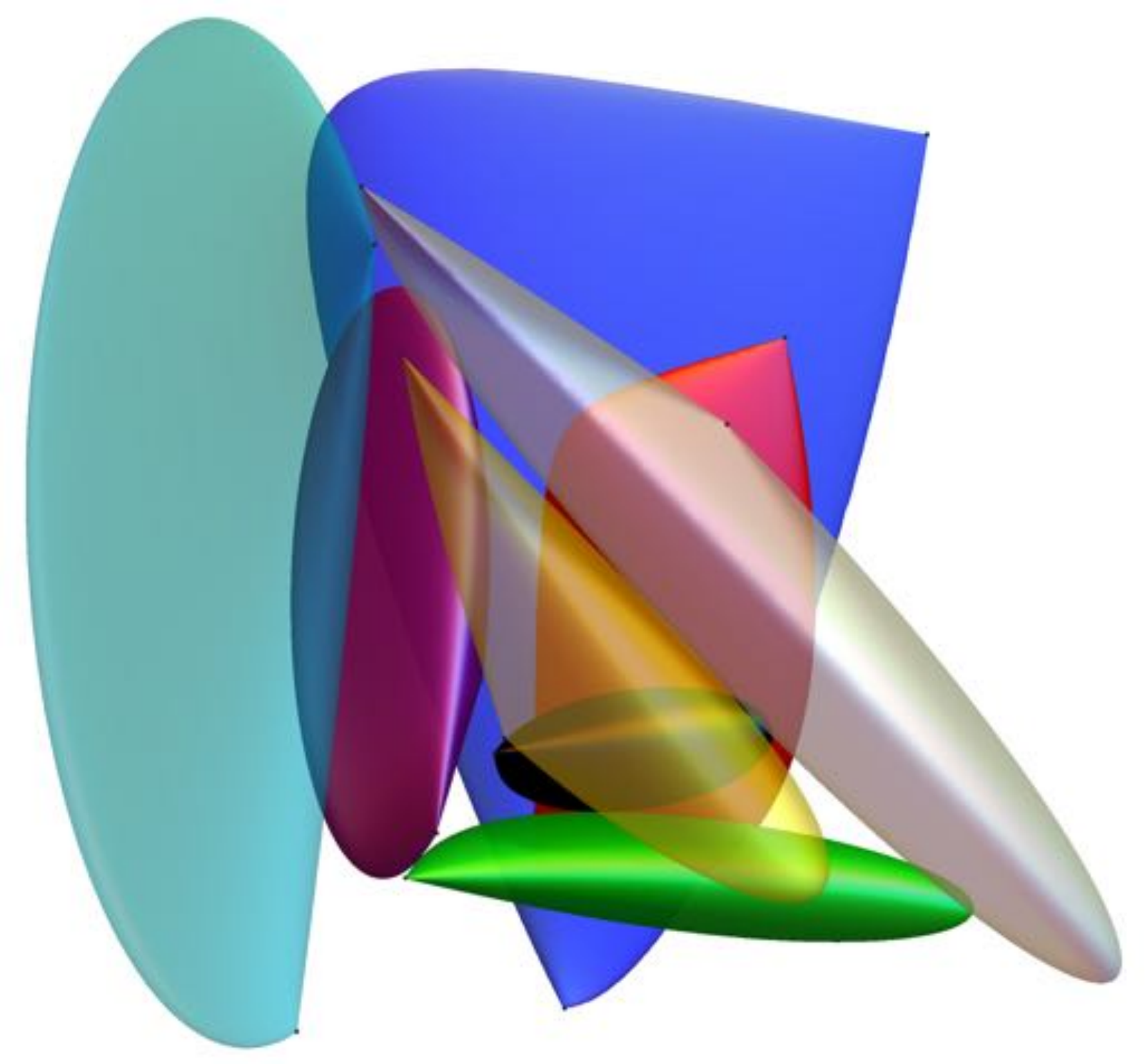}
 \caption{
   We consider
   three quadrics in $\RR^3$ that meet in eight points.
   The SDP-exact region for the ED~problem
   on this variety consists of eight spectrahedra, each around one of these points.
   The algebraic boundaries of the spectahedra are pairwise tangent.
   }
   \label{fig:somosas}
\end{figure}

For $ x \in V_{\bf f}$, we set $\,S_x =  x - \tfrac{1}{2}\,{\rm Jac}_{\bf f}({x}) \cdot \mathrm{S}^{\it ed}_{\bf f} \,$
and we write $\,\partial_{\rm alg} S_x$ for its algebraic boundary.

\begin{theorem} \label{thm:finitevariety} Let $m=n$ and ${\bf f}$ generic, so $V_{\bf f}$ is finite.
Let  $x,x' \in V_{\bf f}$, and $S_x, S_x'$ be the corresponding spectrahedra, and let $\mathrm{bsc}\subset \RR^{n}$ be the  bisector hyperplane of $x$ and~$x'$.     There is a point $u\in\RR^{n}$ at which the three hypersurfaces
     $\mathrm{bsc}$, $\partial_{\rm alg} S_x$ and $\partial_{\rm alg} S_x'$ meet tangentially.
\end{theorem}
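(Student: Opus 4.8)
The plan is to produce the common point $u$ explicitly, as the image of one Lagrange‑multiplier vector under both affine isomorphisms $\phi_x\colon\lambda\mapsto x-\tfrac12{\rm Jac}_{\bf f}(x)\lambda$ and $\phi_{x'}\colon\lambda\mapsto x'-\tfrac12{\rm Jac}_{\bf f}(x')\lambda$ of \Cref{thm:unionspectrahedron}. Since ${\bf f}$ is generic, ${\rm Jac}_{\bf f}(x)$ and ${\rm Jac}_{\bf f}(x')$ are invertible $n\times n$ matrices, so $S_x$ and $S_{x'}$ are genuine spectrahedra: $S_x=\{u:{\rm H}(\lambda_x(u))\succ0\}$ where $\lambda_x:=\phi_x^{-1}$ is the affine map $u\mapsto-2{\rm Jac}_{\bf f}(x)^{-1}(u-x)$, hence $\partial_{\rm alg}S_x=\{u:\det{\rm H}(\lambda_x(u))=0\}$, and similarly for $x'$. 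I would first record the quadratic identity coming from $x,x'\in V_{\bf f}$: writing $w:=x-x'$ and subtracting $f_j(x')=0$ from $f_j(x)=0$ gives $w^T(A_j(x+x')+2a_j)=0$ for every $j$, which, together with the elementary identity $w^TA_jw=w^TA_jx-w^TA_jx'$, says precisely that $w^T{\rm Jac}_{\bf f}(x)=(w^TA_1w,\dots,w^TA_nw)=-w^T{\rm Jac}_{\bf f}(x')$. A corollary worth isolating is that $\mathrm{bsc}$ has the \emph{same} preimage under $\phi_x$ and under $\phi_{x'}$, namely the hyperplane $L=\{\lambda:w^T{\rm Jac}_{\bf f}(x)\lambda=\|w\|^2\}$; this is the structural reason the three hypersurfaces can possibly meet.

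Next I would choose the candidate. Let $M:=(A_1w\mid\cdots\mid A_nw)$; for generic ${\bf f}$ this matrix is invertible (it suffices to exhibit one ${\bf f}$ with $\det M\neq0$, e.g.\ with diagonal $A_j$), so $\lambda^*:=M^{-1}w$ is the unique vector with ${\rm H}(\lambda^*)w=0$. Put $u^*:=\phi_x(\lambda^*)$. Because $({\rm Jac}_{\bf f}(x)-{\rm Jac}_{\bf f}(x'))\lambda^*=2\sum_j\lambda^*_jA_jw=2w$, one checks $\phi_x(\lambda^*)=\phi_{x'}(\lambda^*)$, so $u^*=\phi_{x'}(\lambda^*)$ as well; since $\lambda_x(u^*)=\lambda^*=\lambda_{x'}(u^*)$, the matrix ${\rm H}(\lambda_x(u^*))={\rm H}(\lambda_{x'}(u^*))={\rm H}(\lambda^*)$ is singular, so $u^*$ lies on both $\partial_{\rm alg}S_x$ and $\partial_{\rm alg}S_{x'}$. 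And $w^T{\rm Jac}_{\bf f}(x)\lambda^*=w^TM\lambda^*=\|w\|^2$ by the first step, so $\lambda^*\in L$ and therefore $u^*\in\mathrm{bsc}$. In particular $u^*$ is a \emph{real} point lying on all three hypersurfaces.

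Finally I would verify the tangency at $u^*$. For generic ${\bf f}$ the singular matrix ${\rm H}(\lambda^*)$ has rank exactly $n-1$ (again exhibit one instance), so its adjugate equals $c\,ww^T$ with $c\neq0$ and $\partial(\det{\rm H})/\partial\lambda_j$ at $\lambda^*$ is $-c\,w^TA_jw$. By the first step the vector $(w^TA_1w,\dots,w^TA_nw)$ equals $w^T{\rm Jac}_{\bf f}(x)$, so the chain rule gives $\nabla_u\det{\rm H}(\lambda_x(u))\big|_{u^*}=-2{\rm Jac}_{\bf f}(x)^{-T}(-c\,{\rm Jac}_{\bf f}(x)^Tw)=2c\,w$, and the mirror computation gives $\nabla_u\det{\rm H}(\lambda_{x'}(u))\big|_{u^*}=-2c\,w$. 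Since the normal of $\mathrm{bsc}$ is also proportional to $w=x-x'$, all three hypersurfaces share the tangent hyperplane $\{u:w^T(u-u^*)=0\}=\mathrm{bsc}$ at $u^*$, in agreement with \Cref{fig:voronoi2d}. I expect the only genuine obstacle to be spotting the right candidate — realizing one should force the kernel direction of the Hessian to be $x-x'$; once $\lambda^*$ is in hand the quadratic identity makes everything collapse, and what remains is routine genericity bookkeeping (invertibility of ${\rm Jac}_{\bf f}(x)$, ${\rm Jac}_{\bf f}(x')$, $M$, and smoothness of $u^*$ on each boundary), each handled by a single explicit example.
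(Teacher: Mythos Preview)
Your proposal is correct and follows essentially the same route as the paper: both arguments hinge on the quadratic identity $w^T{\rm Jac}_{\bf f}(x)=(w^TA_1w,\dots,w^TA_nw)=-w^T{\rm Jac}_{\bf f}(x')$ for $w=x-x'$, and on choosing the Lagrange multiplier $\lambda^*$ so that ${\rm H}(\lambda^*)w=0$, from which the coincidence $\phi_x(\lambda^*)=\phi_{x'}(\lambda^*)$, membership in $\mathrm{bsc}$, and the common normal direction $w$ all follow. Your write-up is slightly more streamlined in that you first isolate the observation that $\mathrm{bsc}$ has the \emph{same} pullback $L$ under $\phi_x$ and $\phi_{x'}$, whereas the paper verifies equidistance of $u_x$ from $x$ and $x'$ by a direct computation at the end; but the substance is identical.
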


\begin{proof}
  Let $p(\lambda):= \det(\id_{n}-\sum_i \lambda_i {A}_i)$ be the defining polynomial of $\partial_{\rm alg} {\rm S}^{\it ed}_{\bf f}$.
  Then    $\,p_x(u):= p(2\,{\rm Jac}_{\bf f}({x})^{-1} u - x)\,$ is the defining polynomial of~$\partial_{\rm alg} S_x$.
  We shall construct  a point $u_x$ in the hypersurface $ \partial_{\rm alg} S_x$ whose
     normal vector    $\nabla_u p_x (u_x) $ is parallel to $x-x'$. Notice that
  \begin{align*}
    \nabla_{u} p_x
  \,  =\, 2 (\nabla_\lambda p) {\rm Jac}_{\bf f}({x})^{-1}
  \,  = \,- 2 \left( {A}_1 \bullet M, \dots, {A}_m \bullet M \right) \cdot {\rm Jac}_{\bf f}({x})^{-1},
  \end{align*}
  where    $M $ denotes the adjugate of $\id_{n} - \textstyle\sum_i {\lambda}_i {A}_i$.
  Since this matrix is supposed to be singular,
    \begin{align}\label{eq:adjugate}
    (\id_n-\textstyle\sum_i \lambda_i {A}_i) M  = 0, \quad
    \left( {A}_1 \bullet M, \dots, {A}_m \bullet M \right) \propto
    \tfrac{1}{2} (x' - x)^T {\rm Jac}_{\bf f}({x}), \quad
    \rank (M) = 1.
  \end{align}
  We claim that $M = (x'-x)(x'-x)^T$ satisfies the constraint in the middle.
  This is seen by showing that the $i$-th coordinate of the vector $\tfrac{1}{2} (x' - x)^T {\rm Jac}_{\bf f}({x})$ equals
  \begin{equation}
  \label{eq:Jrankone}
  \begin{aligned}
    (x' - x)^T &(a_i + {A}_i x)
    =  x'^T {A}_i x + a_i^T(x' - x) - x^T {A}_i x \\
    &= x'^T {A}_i x -\tfrac{1}{2}(x'^T {A}_i x' - x^T {A}_i x) - x^T {A}_i x
    = {A}_i \bullet (-\tfrac{1}{2}) (x'-x) (x'-x)^T.
  \end{aligned}
  \end{equation}
  The desired vector $\lambda$ is then determined by the equation $(\id_n-\sum_i\lambda_i {A}_i)(x'-x)=0$.
Now,  \eqref{eq:adjugate} holds, and the point    $u_x = x - \tfrac{1}{2}{\rm Jac}_{\bf f}({x})(\lambda)$
has its normal at $ \partial_{\rm alg} S_x$ parallel to $x' - x$.

  We similarly construct $u_x' \in \partial_{\rm alg} S_x'$.
    By~\eqref{eq:Jrankone}, we have  $(x - x')^T {\rm Jac}_{\bf f}({x}') = (x' - x)^T {\rm Jac}_{\bf f}({x})$.
   Hence the value of $M$ that satisfies~\eqref{eq:adjugate} is the same for both $x$ and~$x'$, and thus $u_x=u_x'$.

  Finally, let us show that $u_x$ lies on $\mathrm{bsc}$.
Since  $(\id_{n} - \textstyle\sum_i \lambda_i {A}_i) (x'-x) = 0$, we have
  \begin{align*}
    u_x^T (x'-x)
    &\,\,=\,\, (x - \textstyle\sum_i \lambda_i(a_i + {A}_i x))^T (x'-x) \\
    &\,\,=\,\, -(\textstyle\sum_i \lambda_i  a_i^T) (x'-x)
    + x^T (\id_{n} - \textstyle\sum_i \lambda_i {A}_i) (x'-x)
   \,\, =\,\, -\textstyle\sum_i \lambda_i  a_i^T (x'-x).
  \end{align*}
The difference $\| u_x - x' \|^2 - \| u_x - x \|^2 $ equals
  \begin{align*}
    \|x'\|^2 &- \|x\|^2 - 2 u_x^T (x'-x)
    = x'^T x' - x^T x + 2\textstyle\sum_i {\lambda}_i a_i^T(x'-x) \qquad \quad \\
    &= x'^T x' - x^T x - \textstyle\sum_i {\lambda}_i (x'^T {A}_i x' - x^T {A}_i x)
    = (x' + x)^T (\id_{n} - \textstyle\sum_i {\lambda}_i {A}_i ) (x' - x)
    =\,\,\, 0.
  \end{align*}
  We see that $u_x$ is equidistant from $x$ and~$x'$, i.e.,~$u_x$ belongs to the
   hyperplane $\mathrm{bsc}$.
   We have shown that our three hypersurfaces all pass through $u_x$ and have the
   same normal vector.
\end{proof}

We next illustrate how the normal bundle from \Cref{thm:unionspectrahedron} looks for a curve.

\begin{figure}[htb]
 \centering
 \includegraphics[height=180pt]{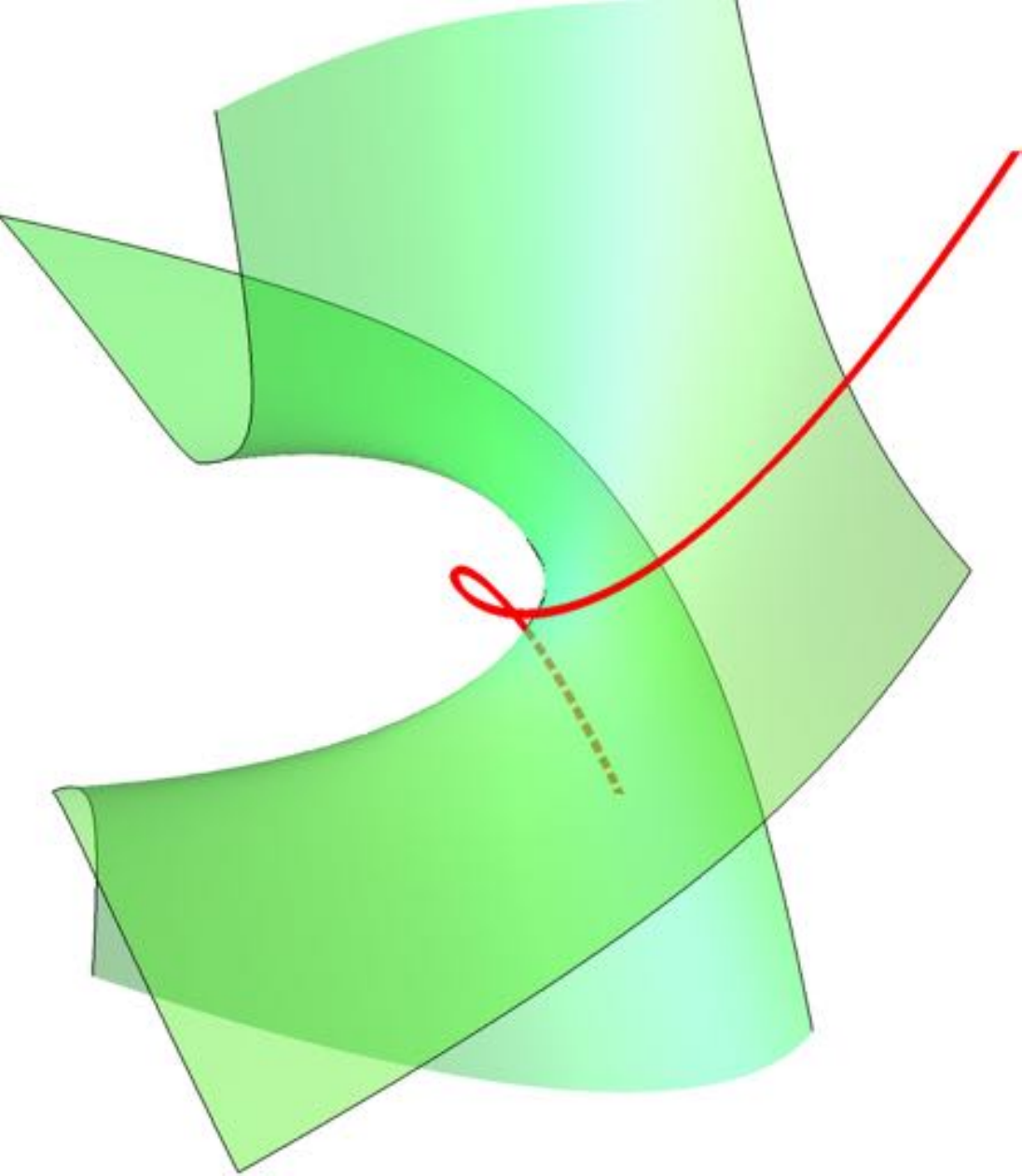}\hspace{50pt}%
 \includegraphics[height=180pt]{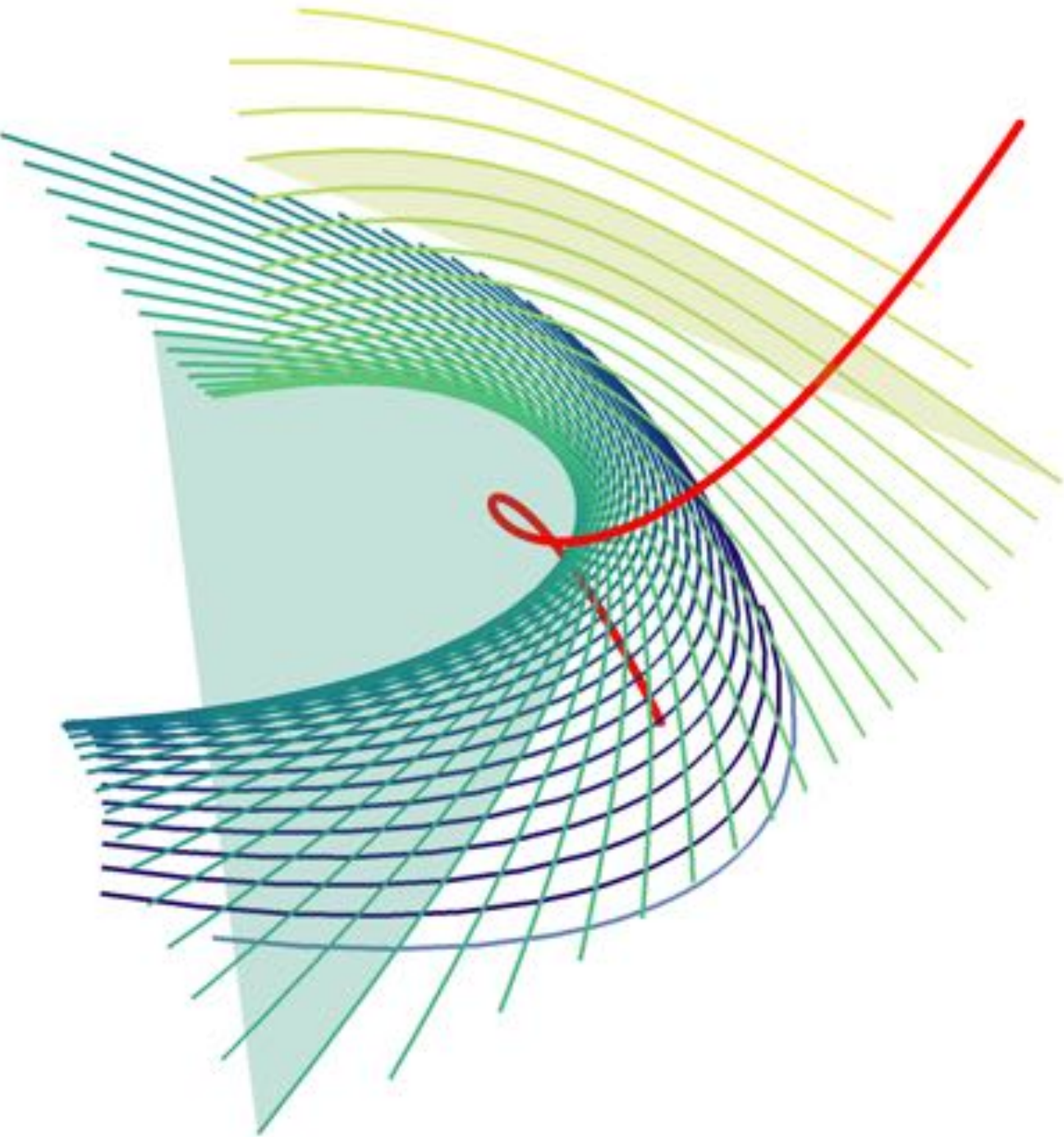}
 \caption{The boundary of the SDP-exact region for the ED~problem on the twisted cubic curve
 is ruled by parabolas. This surface has degree eight.
 It is computed in \Cref{ex:twistedcubic1}.
 \label{fig:twistedcubic}
 }
\end{figure}

\begin{example}\label{ex:twistedcubic0} \rm
  Let ${\bf f} = (x_2 - x_1^2,\, x_3 - x_1 x_2)$, so $V_{\bf f}$ is the {\em twisted cubic curve} in $\RR^3$.
  This specific instance was examined in \cite[Example~1.1]{CAPT}.
  The spectrahedron
  $\mathrm{S}^{\it ed}_{\bf f}$ is the interior of a parabola, namely $\{ \lambda_2^2 < 2 \lambda_1 + 1 \}$.
  The image ${x} - \tfrac{1}{2}{\rm Jac}_{\bf f}({x}) \cdot {\rm S}^{\it ed}_{\bf f}$ is a parabola in the normal plane at~$x$.
  The boundary $\partial \mathcal{R}^{\it ed}_{\bf f}$ is the union of all these parabolas, as shown in \Cref{fig:twistedcubic}.
\end{example}

We will elaborate more on the ED~problem in \Cref{s:5}.
To conclude this section, we briefly develop the connection between our SDP-exact region $\mathcal{R}^{\it lin}_{\bf f}$
 and the theory of theta bodies~\cite{GPT}.
 By \cite[Lemma~5.2]{GPT}, the {first theta body} of our instance ${\bf f}$ is
\begin{align*}
  \mathrm{TH}_1(\mathbf{f}) \,\quad =
  \bigcap_{\substack{F \in \langle \mathbf{f}\rangle\\ F \text{ convex quadric}}}
\!\!\!\!\!  \bigl\{\,x\in \RR^n \, : \, F(x)\leq 0 \,\bigr\}.
\end{align*}
By \cite[\S 2]{GPT}, the set $\mathrm{TH}_1(\mathbf{f})$ is a
 spectrahedral shadow that contains the convex hull of~$V_{\bf f}$.

\begin{proposition} \label{thm:thetabody}
  Let $B = \mathrm{TH}_1(\mathbf{f})$ be the first theta body for the problem {\bf (Lin)}.
  Then the SDP-exact region $ \mathcal{R}^{\it lin}_{\bf f}  $ is the union of the normal cones to
  $B$ at all points in $V_{\bf f}$. In symbols,
  \begin{align*}
    \mathcal{R}^{\it lin}_{\bf f} \,\, = \bigcup_{x \in V_{\bf f}} \! N_B(x).
  \end{align*}
\end{proposition}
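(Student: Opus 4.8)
The plan is to deduce this from \Cref{thm:unionspectrahedron}, which already writes $\mathcal{R}^{\it lin}_{\bf f}$ as the disjoint union $\bigcup_{x\in V_{\bf f}}\tfrac12\,{\rm Jac}_{\bf f}(x)\cdot{\rm S}^{\it lin}_{\bf f}$. So it suffices to match, for each fixed $x\in V_{\bf f}$, the spectrahedral cone $\tfrac12\,{\rm Jac}_{\bf f}(x)\cdot{\rm S}^{\it lin}_{\bf f}$ with the normal cone $N_B(x)$ of $B={\rm TH}_1({\bf f})$ at $x$ --- up to the sign convention for normal cones, and up to the relative-interior/closure gap between the open master spectrahedron and the closed normal cone, which is exactly what the strict-definiteness requirement ${\rm H}(\lambda)\succ0$ in \Cref{def:rank1sol2} encodes.

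The elementary dictionary is as follows. Column $i$ of ${\rm Jac}_{\bf f}(x)$ is $\nabla f_i(x)=2(A_ix+a_i)$, so for an $\RR$-linear combination $F=\sum_i\mu_if_i$ one has $\nabla F(x)={\rm Jac}_{\bf f}(x)\cdot\mu$, while $F$ is a convex quadric exactly when its (constant) Hessian $2\sum_i\mu_iA_i$ is positive semidefinite, i.e. $\mu\in\overline{-{\rm S}^{\it lin}_{\bf f}}$. Moreover every degree-$\le 2$ element of $\langle{\bf f}\rangle$ vanishes on $V_{\bf f}$, so each inequality $F\le 0$ cutting out $B$ is active at $x$. (Here I use $\langle{\bf f}\rangle_{\le 2}=\mathrm{span}_\RR(f_1,\dots,f_m)$, which holds whenever the quadratic parts of the $f_i$ have no linear syzygy, in particular for generic ${\bf f}$.)

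I see two ways to finish. The direct one is a KKT argument: since all the valid convex-quadratic inequalities of $B$ are active at $x$, the normal cone $N_B(x)$ is the closed cone generated by their gradients $\{\nabla F(x)\}$, and by the dictionary this cone equals ${\rm Jac}_{\bf f}(x)\cdot\overline{-{\rm S}^{\it lin}_{\bf f}}$, i.e. $-{\rm Jac}_{\bf f}(x)\cdot\overline{{\rm S}^{\it lin}_{\bf f}}$, which up to sign and closure is the $x$-piece of $\mathcal{R}^{\it lin}_{\bf f}$ from \Cref{thm:unionspectrahedron}. The second, slicker route bypasses the intersection description of $B$: by \cite[\S2]{GPT}, $B$ is the closure of the image of the Shor feasible spectrahedron $\{X\succeq0:\bigA_0\bullet X=1,\ \bigA_i\bullet X=0\}$ under $X\mapsto(X_{01},\dots,X_{0n})$, so minimizing $u^Tx$ over $B$ is literally the Shor relaxation of {\bf (Lin)}; then $u\in N_B(x)$ says $x$ is a minimizer over $B$, and the strict-complementarity clause upgrades this to ``$x$ is the unique, order-one-certified minimizer'', which by \Cref{def:rank1sol2} and \Cref{rem:theseequiv} is precisely membership of $u$ in the $x$-piece of $\mathcal{R}^{\it lin}_{\bf f}$.

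The step I expect to be the main obstacle is the constraint qualification in the first route: verifying that the normal cone of the (typically infinite) intersection $\bigcap_F\{F\le 0\}$ is the \emph{closure of the cone spanned by the active gradients} and nothing larger. This is where one leans on the spectrahedral-shadow structure of $B$, or else simply adopts the second route, which never needs it. A secondary bookkeeping issue is reconciling the relatively open cones of \Cref{thm:unionspectrahedron} with the closed $N_B(x)$: one uses that $B$ strictly contains ${\rm conv}(V_{\bf f})$, so no linear form is minimized over $B$ at two distinct points of $V_{\bf f}$, together with the fact that for generic ${\bf f}$ strict complementarity holds on the relevant cones, as in \Cref{rem:theseequiv}.
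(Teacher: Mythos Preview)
Your ``second route'' is essentially the paper's proof: the paper simply observes that $u\in N_B(x)$ iff $x=\argmax_{y\in B}u^Ty$, that optimizing $u^Ty$ over $B=\mathrm{TH}_1({\bf f})$ is by construction the Shor relaxation of {\bf (Lin)}, and hence $\bigcup_{x\in V_{\bf f}}N_B(x)$ is the set of $u$ for which the relaxation's optimum lies in $V_{\bf f}$ --- which is $\mathcal{R}^{\it lin}_{\bf f}$ by definition. The paper does \emph{not} go through \Cref{thm:unionspectrahedron} at all; it works directly from the definition of the SDP-exact region. It is also brisker than you are about the open/closed and strict-complementarity bookkeeping, simply writing the equality and leaving those caveats implicit.

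Your first route, matching ${\rm Jac}_{\bf f}(x)\cdot{\rm S}^{\it lin}_{\bf f}$ with $N_B(x)$ via the gradients of the active convex-quadratic constraints, is a genuinely different argument. It is more explicit and connects the proposition back to the bundle description, but as you correctly flag it needs a constraint-qualification step (the normal cone of an infinite intersection equals the cone of active gradients) and the assumption $\langle{\bf f}\rangle_{\le 2}=\mathrm{span}_\RR(f_1,\dots,f_m)$. The paper's route sidesteps both issues entirely.
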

\begin{proof}
  Note that $u\in N_B(x)$ if and only if $x=\argmax_{y\in B} u^T y$.
  On the other hand, the problem $\max_{y\in B} u^T y$ is equivalent to the SDP relaxation of our QP~\eqref{eq:qp1}.
  Then,
  \begin{align*}
    \bigcup_{x \in V_{\bf f}} N_B(x)
    &\,\, =\,\,
    \{ u \in \RR^n : (\argmax_{y\in B} u^Ty) \in V_{\bf f} \}\\
    &\,\,=\,\,
    \{ u \in \RR^n : \text{ the solution of the SDP relaxation lies in } V_{\bf f} \}.
  \end{align*}
  By definition, this set   is the SDP-exact region for {\bf (Lin)}.
  For an illustration see \Cref{fig:thetabody}.
\end{proof}

\section{Boundary Hypersurfaces in \texorpdfstring{$\RR^n$}{Rn}}\label{s:5}

We now examine our degrees of the ED~problem.
Following \cite{DHOST}, the Euclidean distance degree of $V_{\bf f}$, denoted
$\operatorname{EDdegree}(V_{\bf f})$,
counts the number of complex critical points for the squared distance function
$g_u(x) = \| x-u \|^2$ on the variety $V_{\bf f}$, where $u \in \RR^n$ is a generic point.

\begin{proposition} \label{prop:EDdegree}
The algebraic degree of the quadratic program \eqref{eq:qp1}
that solves the ED~problem for $V_{\bf f}$ is $\operatorname{EDdegree}(V_{\bf f})$.
This is bounded above by  $ 2^{m} \binom{n}{m} $.
Equality holds for generic~${\bf f}$.
\end{proposition}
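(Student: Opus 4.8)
The plan is to establish the two claims separately: first that the algebraic degree of the QP \eqref{eq:qp1} solving the ED problem equals $\operatorname{EDdegree}(V_{\bf f})$, and then that this quantity is bounded above by $2^m\binom{n}{m}$ with equality for generic $\bf f$. For the first claim, recall that the algebraic degree of \eqref{eq:qp1} counts complex solutions $(X,Y)$ of the critical equations; passing through the identification in \eqref{eq:vecmat} and the Lagrangian picture in \Cref{def:rank1sol2}, a rank-one solution $X$ corresponds to a point $x\in V_{\bf f}$ together with Lagrange multipliers $\lambda$ satisfying the stationarity condition $\nabla_x \mathcal{L}(\lambda,x)=0$. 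For the ED cost $g_u(x)=\|x-u\|^2$, the Lagrangian stationarity equations $\nabla\big(\|x-u\|^2-\sum_i\lambda_i f_i(x)\big)=0$ together with $f_i(x)=0$ are precisely the Lagrange conditions defining critical points of the squared distance function on $V_{\bf f}$. Eliminating $\lambda$ recovers the standard ED critical system, whose number of complex solutions for generic $u$ is by definition $\operatorname{EDdegree}(V_{\bf f})$. So the first claim is essentially a matter of unwinding definitions and checking that the elimination of $\lambda$ does not change the count for generic $u$ (the Jacobian has full rank at a generic point of a generic complete intersection).

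For the upper bound, I would invoke the general formula for the algebraic degree of quadratic programming already recorded in \eqref{eq:algdegQP0}: summing the bidegree $[CV']$ over all relevant terms, or rather noting that the ED problem is the specialization of \eqref{eq:qp1} to the cost matrix $\bigC_u^{\it ed}$ in \eqref{eq:distanceC}, whose number of critical points is bounded by the generic QP algebraic degree with $m$ constraints in $n$ variables. That degree is the top coefficient $2^m\binom{n}{m}$ appearing in \eqref{eq:algdegQP0} (taking the term corresponding to rank-one $X$), so $\operatorname{EDdegree}(V_{\bf f})\le 2^m\binom{n}{m}$. Alternatively, and perhaps more transparently, a direct Bézout-type count on the ED critical system for $m$ generic quadrics works: the Lagrange conditions consist of $m$ quadrics $f_i=0$ and $n$ equations expressing that $x-u$ lies in the span of the gradients $\nabla f_i(x)$, which can be encoded by the vanishing of the $(m{+}1)\times(m{+}1)$ minors of the $(m{+}1)\times n$ matrix whose rows are $x-u$ and $\nabla f_1,\dots,\nabla f_m$; a careful multihomogeneous Bézout estimate, accounting for the quadratic nature of the $f_i$ (hence linear gradients), yields exactly $2^m\binom{n}{m}$.

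The equality statement for generic $\bf f$ is the substantive point. Here I would argue that the Bézout bound is attained because for a generic complete intersection of quadrics the ED critical variety is zero-dimensional and reduced with no solutions escaping to infinity or to the singular locus: genericity of the $f_i$ forces $V_{\bf f}$ to be smooth of the expected dimension, and a transversality/parameter-count argument (in the spirit of the genericity reductions used in \Cref{thm:beta} and \Cref{thm:betaqp}) shows that all $2^m\binom{n}{m}$ complex critical points are distinct and finite for generic $u$. The main obstacle is precisely this last step: ruling out multiplicity and excess components at infinity for the ED critical system, which requires identifying the correct compactification and checking that the relevant intersection is transverse there. I expect one can either cite the isotropic-quadric analysis of \cite{DHOST} (where the ED degree of a generic complete intersection is computed and shown to meet its Bézout bound) or reproduce that computation by intersecting with the hyperplane at infinity and verifying the boundary contribution vanishes generically.
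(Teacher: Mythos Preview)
Your proposal is correct in substance, but it is considerably more elaborate than what the paper does. The paper's proof is two sentences: the first claim is declared ``immediate from the definition of the ED degree,'' and the bound together with the equality for generic ${\bf f}$ are obtained by a direct citation of \cite[Proposition~2.6]{DHOST}. Your unwinding of the Lagrangian stationarity conditions to match the ED critical system is exactly what ``immediate from the definition'' means, so that part coincides. For the bound and the equality, you offer two routes: one via the QP algebraic degree formula~\eqref{eq:algdegQP0}, and one via a multihomogeneous B\'ezout count on the augmented Jacobian together with a transversality argument at infinity. The second route is essentially the content of \cite[Proposition~2.6]{DHOST}, which you yourself note can be cited; the paper simply does that and stops. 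Your first route through~\eqref{eq:algdegQP0} is a genuine alternative, but note that it only gives the upper bound cleanly---to get equality for generic ${\bf f}$ you would still need the transversality check, which is precisely the work packaged in the cited result. In short: nothing is wrong, but the efficient move is to cite \cite{DHOST} rather than reprove it.
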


\begin{proof}
The first statement is immediate from the definition of the ED degree.
The last two statements follow from \cite[Proposition~2.6]{DHOST}.
\end{proof}

We next assume that ${\bf f}$ is generic.
Hence $V_{\bf f}$ is a generic complete intersection.
We are interested in the degree  $\beta_{ED}(m,n)$ of the hypersurface
$\,\partial_{\rm alg} \mathcal{R}^{\it ed}_{\bf f} \subset \RR^n\,$ that bounds
the SDP-exact region  for the ED~problem. \Cref{tab:degreesed} shows $\beta_{ED}(m,n)$ for
some small cases.

\begin{figure}[htb]
 \centering
 \begin{minipage}{0.59\textwidth}
   \centering
 \includegraphics[width=220pt]{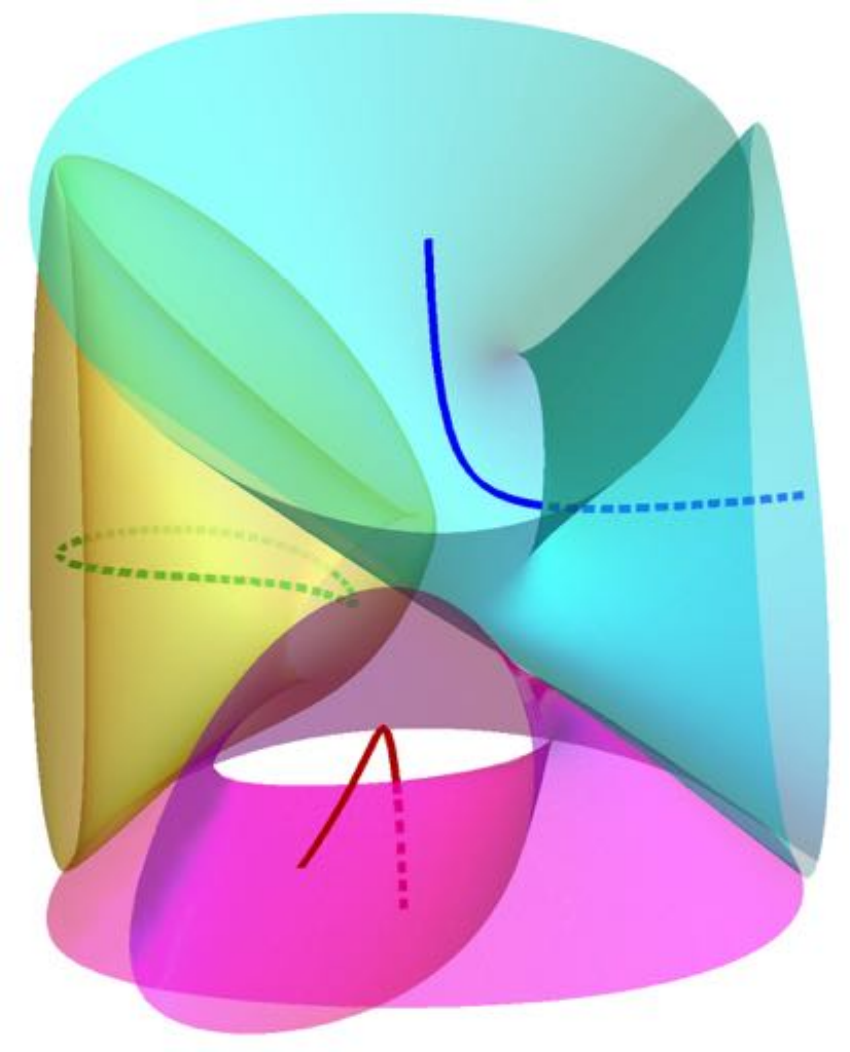}
 \end{minipage}%
 \begin{minipage}{0.4\textwidth}
   \centering
   \includegraphics[width=160pt]{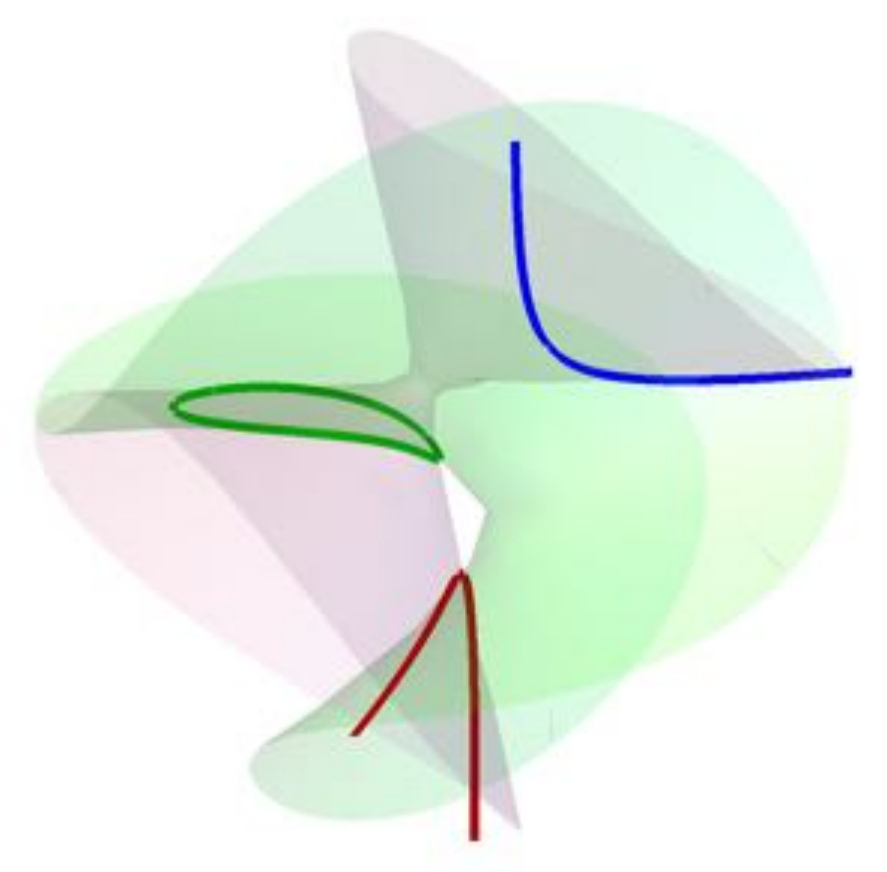} \\
 \includegraphics[width=90pt]{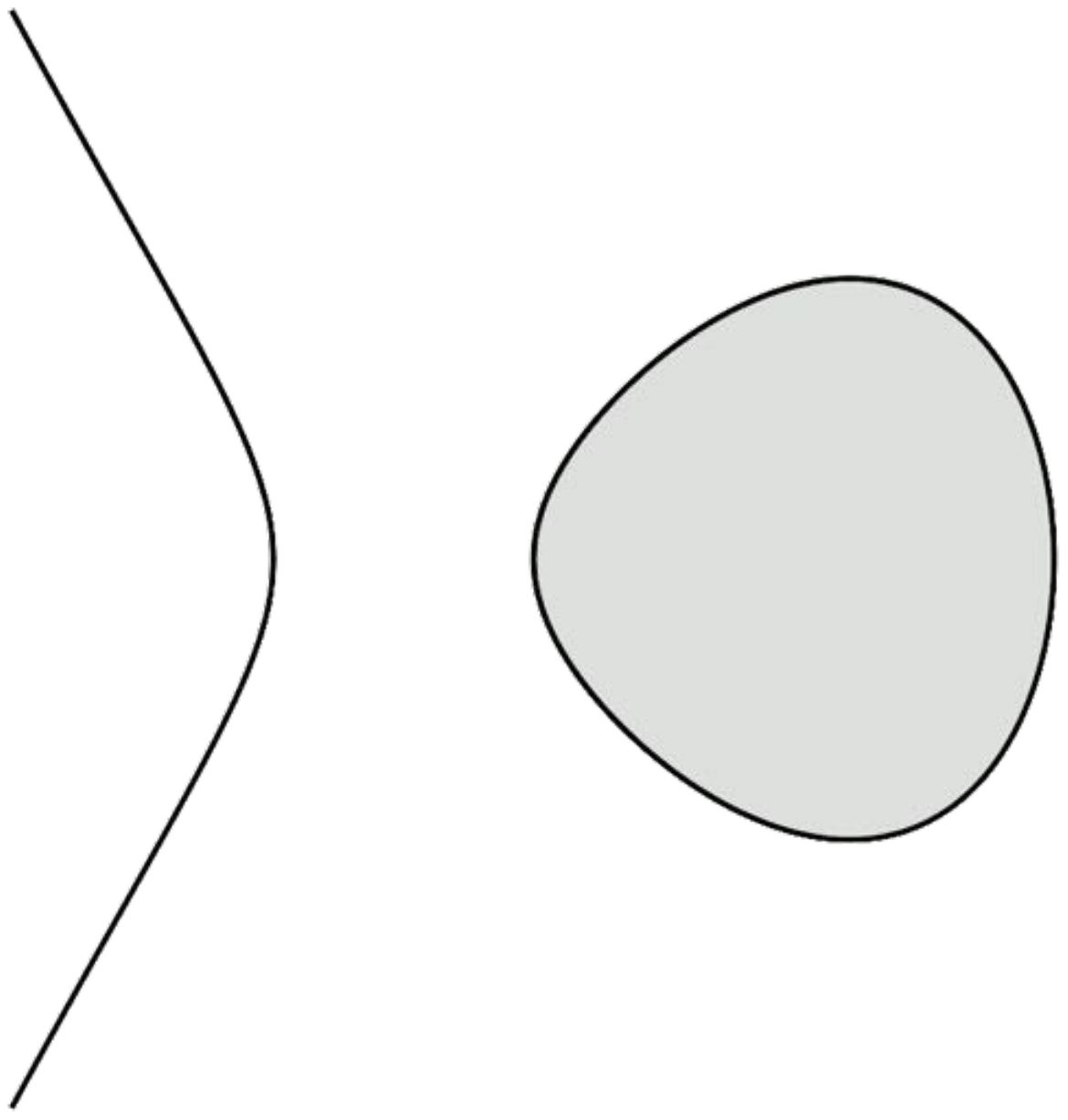}
 \end{minipage}%
 \caption{
   Upper right: Space curve cut out by two quadrics.
   Left: The SDP-exact region for its ED~problem.
      Lower right: The elliptic curve that defines the master spectrahedron.
 }
 \label{fig:hyperboloids}
\end{figure}

\begin{example}[$m=2,n=3$] \rm \label{ex:twoellipticcurves}
  \Cref{fig:hyperboloids} shows the SDP-exact region for a generic instance.
  Its boundary is an irreducible surface of degree $24$.
 The master spectrahedron is the convex region of a planar cubic
    (lower right in \Cref{fig:hyperboloids}).
  The variety $V_{\bf f}$ is a space curve of degree~$4$, obtained by intersecting  two hyperboloids
  (upper right in \Cref{fig:hyperboloids}).
  We regard both curves as elliptic curves, the first in $\PP^2$ and the second in $\PP^3$.
The product of these two elliptic curves is an {\em abelian surface}, which has
degree $24$ under its Segre embedding into $\PP^2 \times \PP^3 \subset \PP^{11}$.
Our boundary surface $\partial_{\rm alg} \mathcal{R}^{\it ed}_{\bf f}$ is a projection
of this surface into $\PP^3$. This explains $\beta_{ED}(2,3) = 24$.
The picture on the left in \Cref{fig:hyperboloids} shows
$\partial \mathcal{R}^{\it ed}_{\bf f}$ in real affine space $\RR^3$. Each of the three connected
components of the curve $V_{\bf f}$ is surrounded by
one color-coded component of that surface.  These three pieces of
$\partial \mathcal{R}^{\it ed}_{\bf f}$ are pairwise tangent along curves.
 \end{example}

\begin{table}[h]
 \begin{center} \qquad
\begin{tabular}{c|*{5}{c}}
\toprule
  \multicolumn{6}{c}{ED~degrees for $V_{\bf f}$}\\
\midrule
  $m\backslash n$ & 2& 3 & 4 & 5 & 6 \\
\midrule
2 & 4 & 12 & 24 & 40 & 60 \\
3 &   & 8 & 32 & 80 & 160 \\
4 &   &   & 16 & 80 & 240 \\
5 &   &   &   & 32 & 192 \\
6 &   &   &   &   & 64 \\
 \end{tabular}\qquad \qquad
\vspace{-0.11in}
\begin{tabular}{c|*{5}{c}}
\toprule
  \multicolumn{6}{c}{Boundary degrees $\beta_{ED}(m,n)$}\\
\midrule
  $m\backslash n$ & 2& 3 & 4 & 5 & 6 \\
\midrule
 2 & 8 & 24 & 48 & 80 & 120 \\
 3 &   & 24 & 96 & 240 & 480 \\
 4 &   &   & 64 & 320 & 960 \\
 5 &   &   &   & 160 & 960 \\
 6 &   &   &   &   & 384 \\

 \end{tabular}
\end{center}
\caption{\label{tab:degreesed} Algebraic degrees and boundary degrees for the ED~problem.}
  \end{table}

For the subsequent degree computations we record the following standard fact from algebraic geometry.
\Cref{ex:twoellipticcurves} used this formula for deriving the number $3 \cdot 4 \cdot \binom{1+1}{1} = 24$.

\begin{lemma}  \label{lem:degXxY} Fix two projective varieties $V \subset \PP^n$ and $W \subset \PP^m$.
The projective variety
 $V \times W$  has degree $\,\deg(V)\deg(W) \binom{\dim V + \dim W}{\dim W}\,$
in the Segre embedding of $ \,\PP^n \times \PP^m \,$ in $\,\PP^{(n+1)(m+1)-1}$.
\end{lemma}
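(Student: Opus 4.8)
The plan is to compute the Segre degree of $V\times W$ by intersection theory in the cohomology ring of the product,
\begin{equation*}
H^*(\PP^n\times\PP^m)\;=\;\ZZ[s,t]\,/\,\langle\, s^{n+1},\,t^{m+1}\,\rangle,
\end{equation*}
where $s$ and $t$ are the pullbacks of the hyperplane classes of the two factors. Write $a=\dim V$ and $b=\dim W$. The first step is to record the class of the product cycle. Since $[V]=\deg(V)\,s^{n-a}$ in $H^*(\PP^n)$ and $[W]=\deg(W)\,t^{m-b}$ in $H^*(\PP^m)$, pulling back along the two projections and multiplying gives
\begin{equation*}
[V\times W]\;=\;\deg(V)\deg(W)\,s^{n-a}t^{m-b}.
\end{equation*}

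The second step identifies the ambient hyperplane class. The Segre embedding $\PP^n\times\PP^m\hookrightarrow\PP^{(n+1)(m+1)-1}$ is given by the line bundle $\mathcal{O}(1,1)$, so the pullback of the hyperplane class of $\PP^{(n+1)(m+1)-1}$ is $s+t$. Therefore the Segre degree of $V\times W$ is the intersection number
\begin{equation*}
\deg(V\times W)\;=\;\int_{\PP^n\times\PP^m}(s+t)^{a+b}\cdot\deg(V)\deg(W)\,s^{n-a}t^{m-b},
\end{equation*}
where $\int$ extracts the coefficient of $s^nt^m$. Expanding $(s+t)^{a+b}=\sum_k\binom{a+b}{k}s^kt^{a+b-k}$ and using $s^{n+1}=t^{m+1}=0$, the unique surviving term is $k=a$: it has $s$-degree $a+(n-a)=n$ and $t$-degree $(a+b-a)+(m-b)=m$. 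Its coefficient is $\binom{a+b}{a}=\binom{\dim V+\dim W}{\dim W}$, and multiplying by $\deg(V)\deg(W)$ gives the claimed formula.

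There is no serious obstacle here; the only point deserving a line of justification is the multiplicativity $[V\times W]=[V]\boxtimes[W]$, which follows from the K\"unneth decomposition of $H^*(\PP^n\times\PP^m)$, or concretely from the fact that $V\times W=(V\times\PP^m)\cap(\PP^n\times W)$ is a dimensionally transverse intersection (generically transverse at smooth points), so the intersection product of the two classes computes the class of the set-theoretic intersection with multiplicity one. If one prefers to avoid cohomology entirely, an equivalent route is to intersect $V\times W$ with $a+b$ generic members of $|\mathcal{O}(1,1)|$ and count points directly, keeping track of how many of those divisors are ``spent'' cutting down each factor; the combinatorics of that book-keeping produces exactly the binomial coefficient $\binom{a+b}{b}$, and the classical B\'ezout count on $\PP^n$ and $\PP^m$ supplies the factors $\deg(V)$ and $\deg(W)$.
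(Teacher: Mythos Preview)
Your proof is correct. The paper does not actually prove this lemma: it is introduced as a ``standard fact from algebraic geometry'' and stated without argument. Your cohomological computation in $H^*(\PP^n\times\PP^m)=\ZZ[s,t]/\langle s^{n+1},t^{m+1}\rangle$, using that the Segre hyperplane class pulls back to $s+t$ and then extracting the $s^nt^m$ coefficient of $(s+t)^{a+b}\cdot\deg(V)\deg(W)\,s^{n-a}t^{m-b}$, is exactly the standard justification the paper is alluding to, and is in the same spirit as the bidegree computations the paper performs elsewhere (e.g.\ for $[CV]$ and $[BV]$ in Section~2). The closing remark about the alternative B\'ezout-style count is also fine but unnecessary here.
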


We consider the product of our feasible set $V_{\bf f}$ with the algebraic boundary
of its master spectrahedron $ {\rm S}^{\it ed}_{\bf f}$.
This is the real algebraic variety
$\,V_{\bf f} \times \partial_{\rm alg} {\rm S}^{\it ed}_{\bf f} \,$ in $\RR^n \times \RR^m$.
We identify this variety with its Zariski closure in the product
of complex projective spaces $\PP^n \times \PP^m$.
Under the Segre map, we embed
$\,V_{\bf f} \times \partial_{\rm alg} {\rm S}^{\it ed}_{\bf f} \,$
as a projective variety in $\PP^{(m+1)(n+1)-1}$.

\begin{corollary} \label{cor:hasdimension}
  The variety $\,V_{\bf f} \times \partial_{\rm alg} {\rm S}^{\it ed}_{\bf f} \,$
has dimension $n -1$ and degree $m\, 2^m \binom{n}{m}$.
\end{corollary}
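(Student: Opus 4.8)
The plan is to combine two ingredients that are already in place: the dimension/degree formula for Segre products (\Cref{lem:degXxY}) and the description of the master spectrahedron ${\rm S}^{\it ed}_{\bf f}$ coming from \eqref{eq:masterspec}. First I would settle the dimension. Since ${\bf f}$ is generic, $V_{\bf f}$ is a smooth complete intersection of $m$ quadrics in $\RR^n$, hence has dimension $n-m$ and degree $2^m$ as a subvariety of $\PP^n$ (its Zariski closure). The algebraic boundary $\partial_{\rm alg} {\rm S}^{\it ed}_{\bf f}$ is the hypersurface in $\RR^m$ cut out by $p(\lambda) = \det\bigl(\id_n - \sum_i \lambda_i A_i\bigr) = 0$; this is a polynomial of degree $n$ in $\lambda$, so the corresponding projective hypersurface in $\PP^m$ has dimension $m-1$ and degree $n$ (here genericity of the $A_i$ ensures $p$ is irreducible of the expected degree, by the standard fact that a generic symmetric linear matrix pencil has irreducible determinant of full degree). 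Adding the dimensions gives $(n-m) + (m-1) = n-1$, which is the claimed dimension.

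Next I would plug into \Cref{lem:degXxY}. With $V = V_{\bf f} \subset \PP^n$, $\dim V = n-m$, $\deg V = 2^m$, and $W = \partial_{\rm alg} {\rm S}^{\it ed}_{\bf f} \subset \PP^m$, $\dim W = m-1$, $\deg W = n$, the lemma yields
\begin{equation*}
\deg\bigl(V_{\bf f} \times \partial_{\rm alg} {\rm S}^{\it ed}_{\bf f}\bigr)
\,=\, \deg(V)\,\deg(W)\,\binom{\dim V + \dim W}{\dim W}
\,=\, 2^m \cdot n \cdot \binom{n-1}{m-1}.
\end{equation*}
It then only remains to check the binomial identity $n\binom{n-1}{m-1} = m\binom{n}{m}$, which is immediate from $m\binom{n}{m} = n\binom{n-1}{m-1}$ (both count pairs consisting of an $m$-subset of $[n]$ together with a distinguished element). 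This rewrites the degree as $m\,2^m\binom{n}{m}$, as stated.

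The main obstacle, and the only place where genericity of ${\bf f}$ really needs to be invoked carefully, is verifying that $p(\lambda) = \det(\id_n - \sum_i \lambda_i A_i)$ is an irreducible polynomial of degree exactly $n$ in $\lambda$, so that its vanishing locus is a genuine hypersurface of that degree (not something lower-dimensional or a non-reduced/reducible scheme) whose Zariski closure in $\PP^m$ behaves as expected. For generic symmetric matrices $A_1,\dots,A_m$ with $m \geq 2$ this is classical: the determinant of a generic linear section of the space of symmetric matrices is irreducible (see e.g. the discussion of symmetric determinantal hypersurfaces), and the coefficient of $\lambda_i^n$ is $\det(-A_i) \neq 0$, so the degree is $n$. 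When $m = n$ one has $\dim W = n-1$ and $\dim V = 0$, and the formula degenerates gracefully: $V_{\bf f}$ is $2^n$ points and each contributes the degree-$n$ boundary hypersurface, giving $2^n \cdot n$, consistent with \Cref{thm:finitevariety0}(b). I would also note in passing that \Cref{cor:hasdimension} is exactly the input needed for the forthcoming computation of $\beta_{ED}(m,n)$ in \Cref{tab:degreesed}, via projecting this Segre product into $\PP^n$.
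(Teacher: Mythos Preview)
Your proposal is correct and follows essentially the same approach as the paper: compute $\dim V_{\bf f}=n-m$, $\deg V_{\bf f}=2^m$, $\dim \partial_{\rm alg}{\rm S}^{\it ed}_{\bf f}=m-1$, $\deg \partial_{\rm alg}{\rm S}^{\it ed}_{\bf f}=n$, then apply \Cref{lem:degXxY} and the identity $n\binom{n-1}{m-1}=m\binom{n}{m}$. Your additional care about irreducibility of $\det(\id_n-\sum_i\lambda_iA_i)$ is more than the paper spells out, though note that for the degree count alone one only needs that this determinant has degree exactly $n$ in $\lambda$, which already follows from the top coefficient being nonzero.
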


\begin{proof}
The variety $V_{\bf f}$ has dimension $n-m$ and degree $2^m$.
The variety  $\partial_{\rm alg} {\rm S}^{\it ed}_{\bf f} $ has dimension $m-1$ and degree $n$.
By \Cref{lem:degXxY}, their product has degree  $\,2^m \cdot n \cdot \binom{n-1}{m-1}
= m \cdot 2^m \cdot \binom{n}{m}$.
\end{proof}

By \Cref{thm:unionspectrahedron}, the boundary of the SDP-exact region is the image of
$\,V_{\bf f} \times \partial_{\rm alg} {\rm S}^{\it ed}_{\bf f} \,$ under
\begin{equation}
\label{eq:affpsi}
  \psi\,:\, \RR^n \times \RR^m \,\to\, \RR^n, \qquad
  (x,\lambda) \,\,\mapsto \,\,
  x - \tfrac{1}{2}{\rm Jac}_{\bf f}({x}) \lambda
  \,=\, x - \textstyle\sum_{i=1}^m \lambda_i (a_i + A_i x).
\end{equation}
The map $\psi$ is bilinear. We consider its homogenization
\begin{align} \Psi \,:\,
 \PP^n\times \PP^m\, \dashrightarrow \, \PP^n, \,\,\,
 \bigl(\,(x_0:x)\, ,\,  (\lambda_0:\lambda)\, \bigr) \,\mapsto \,   \bigl( \,\lambda_0 x_0\,: \,
  \lambda_0 x - \textstyle\sum_{i=1}^m     \lambda_i (x_0 a_i+  A_i x)\,   \bigr). \label{eqn:projPsi}
\end{align}
This map factors as the Segre embedding $\sigma$ followed by a linear projection $\pi$:
\begin{equation}
\label{eq:segrefollowed}
 \PP^n \times \PP^m \,\overset{\sigma}{\longrightarrow}\,\, \PP^{(n+1)(m+1)-1}
\,\overset{\pi}{\dashrightarrow}\,\, \PP^n .
\end{equation}

\begin{lemma} \label{lem:basepointfree}
  The restriction of $\,\pi$ to  (the image under $\sigma$ of) $\,V_{\bf f} \times \partial_{\rm alg} {\rm S}^{\it ed}_{\bf f} \,$
is  base-point free. \end{lemma}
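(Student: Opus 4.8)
The plan is to show that the linear projection $\pi$ from \eqref{eq:segrefollowed} has no base points on the Segre image of $V_{\bf f} \times \partial_{\rm alg} {\rm S}^{\it ed}_{\bf f}$; equivalently, that the homogeneous rational map $\Psi$ in \eqref{eqn:projPsi} is everywhere defined on this product variety. A base point would be a pair $\bigl((x_0:x),(\lambda_0:\lambda)\bigr)$ with $x \in V_{\bf f}$ (in the projective/homogenized sense) and $\lambda \in \partial_{\rm alg}{\rm S}^{\it ed}_{\bf f}$ at which all $n{+}1$ coordinates of $\Psi$ vanish, i.e.
\begin{equation*}
\lambda_0 x_0 = 0 \qquad \text{and} \qquad \lambda_0 x \,-\, \textstyle\sum_{i=1}^m \lambda_i\,(x_0 a_i + A_i x) \,=\, 0 .
\end{equation*}
The first equation forces $\lambda_0 = 0$ or $x_0 = 0$, so I will analyze the two cases separately.

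\emph{Case $\lambda_0 \neq 0$, so $x_0 = 0$.} Then $x$ is a point at infinity of $V_{\bf f}$, meaning a nonzero vector in the projectivized cone over the leading (degree-two) parts of $f_1,\dots,f_m$; the second equation becomes $\lambda_0 x = \sum_i \lambda_i A_i x$, i.e.\ $x$ lies in the kernel of $\lambda_0 \id_n - \sum_i \lambda_i A_i$. After rescaling $\lambda$ so $\lambda_0 = 1$, this says $\id_n - \sum_i \lambda_i A_i$ is singular, which is exactly the condition $\lambda \in \partial_{\rm alg}{\rm S}^{\it ed}_{\bf f}$ — so the singularity itself is not yet a contradiction. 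The point is that for generic ${\bf f}$ the quadrics at infinity $\{x^T A_i x = 0\}$ intersect in a smooth complete intersection disjoint from the degenerate locus of the pencil; I would argue by a dimension count that a common solution $x$ of "$x^T A_i x = 0$ for all $i$" and "$x \in \ker(\id_n - \sum \lambda_i A_i)$ for some $\lambda \in \partial_{\rm alg}{\rm S}^{\it ed}_{\bf f}$" cannot exist generically, because the first locus has codimension $m$ in $\PP^{n-1}$ and imposing membership in a kernel of the one-parameter-family-of-matrices over $\partial_{\rm alg}{\rm S}^{\it ed}_{\bf f}$ (itself of dimension $m-1$, tracing out a bundle of lines) is a further codimension-$(n-m)$ condition, exceeding the available dimension $n-1$. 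Here I would invoke genericity of ${\bf f}$ (as in Proposition~\ref{prop:EDdegree} and the standing hypotheses of \Cref{s:5}) to make the count rigorous.

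\emph{Case $\lambda_0 = 0$.} Then $\lambda$ is a point at infinity of ${\rm S}^{\it ed}_{\bf f}$, i.e.\ a nonzero $\lambda$ with $\sum_i \lambda_i A_i$ singular (this is the boundary at infinity of the master spectrahedron), and the second equation reads $\sum_i \lambda_i(x_0 a_i + A_i x) = 0$. If $x_0 \neq 0$, dehomogenize to $x \in V_{\bf f} \subset \RR^n$ (a genuine affine point), and the equation says $\sum_i \lambda_i\,{\rm Jac}_{\bf f}(x)\,e_i = 0$ after using $a_i + A_i x = \nabla f_i(x)$ — wait, more precisely it says $\sum_i \lambda_i(a_i + A_i x) = 0$, i.e.\ $\lambda$ lies in the kernel of ${\rm Jac}_{\bf f}(x)$. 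But for generic ${\bf f}$ and $x \in V_{\bf f}$ the Jacobian ${\rm Jac}_{\bf f}(x)$ has full column rank $m$ (the variety is a smooth complete intersection), so its kernel is zero, contradicting $\lambda \neq 0$. If instead $x_0 = 0$ as well, then both $x$ and $\lambda$ are at infinity and the equation is $\sum_i \lambda_i A_i x = 0$ with $\sum_i \lambda_i A_i$ singular and $x$ in the common zero locus of the leading quadrics — another dimension count of the same flavor as Case~1 rules this out for generic~${\bf f}$.

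\emph{Main obstacle.} The routine part is the affine sub-case ($x_0 \neq 0$), which follows immediately from smoothness of the complete intersection. The delicate part is the behavior at infinity — controlling the interaction between the quadric-at-infinity locus $\{x^T A_i x = 0\}$ and the singular locus of the affine pencil $\{\det(\id_n - \sum \lambda_i A_i) = 0\}$, i.e.\ verifying that the "asymptotic" base-point candidates are genuinely excluded. I expect this to require the cleanest argument via a transversality/dimension count on an incidence variety in $\PP^{n-1} \times \PP^{m-1}$, parametrizing pairs $(x,\lambda)$ with $x^T A_i x = 0$ for all $i$, $(\sum \lambda_i A_i) x = 0$ — and then showing this incidence variety is empty for generic $(A_1,\dots,A_m)$ by a parameter-space (Bertini-type) argument, exactly as genericity of ${\bf f}$ is used throughout the section.
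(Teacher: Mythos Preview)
Your proposal is correct and follows the same case split as the paper (on $\lambda_0 x_0 = 0$), with the affine case $\lambda_0=0,\,x_0 \neq 0$ handled identically via the full-rank Jacobian of the smooth complete intersection. For $x_0 = 0$ the paper is more direct than your incidence-variety dimension counts: it treats $\lambda_0 x - \sum_i \lambda_i A_i x = 0$ as a single linear system of $n$ equations in the $m{+}1$ unknowns $(\lambda_0,\ldots,\lambda_m)$, observes that one may assume $m<n$ (else $V_{\bf f}\cap\{x_0{=}0\}=\emptyset$), and concludes by genericity of ${\bf f}$ that there is no nonzero solution---disposing of both your sub-cases $\lambda_0\neq 0$ and $\lambda_0=0$ at once.
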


\begin{proof}
  We show that $L \cap \sigma(V_{\bf f} \times \partial_{\rm alg} {\rm S}^{\it ed}_{\bf f} ) = \emptyset$, where
  $L \subset \PP^{(n+1)(m+1)-1}$ is the base locus of~$\pi$.
  By  \eqref{eqn:projPsi}, we know that $L$ is contained in $\{\lambda_0 x_0 = 0\}$.
  First, assume $\lambda_0=0$ and $x_0=1$.
  The equations from \eqref{eqn:projPsi} simplify to $\sum_{i=1}^{m}
  \lambda_i(a_i +  A_i x) = 0$, which means ${\rm Jac}_{\bf f}(x) \lambda = 0$.
  But this is impossible because ${\rm Jac}_{\bf f}(x)$ has full rank, by
  genericity of ${\bf f}$.
  Consider now the case $x_0=0$.
  We may assume that $m<n$, as otherwise $V_{\bf f}$ does not intersect $\{x_0=0\}$.
  Setting the image in \eqref{eqn:projPsi} to zero, we get $\lambda_0 x - \sum_{i=1}^{m} \lambda_i ( A_i x) = 0$.
  Viewed as a system of linear equations in $\lambda_0,\lambda_1,\ldots, \lambda_m $,
   this is overconstrained, so by genericity it has no nonzero solution.
   \end{proof}

   We now write $\pi$ for the restriction to $V_{\bf f} \times \partial_{\rm alg} {\rm S}^{\it ed}_{\bf f}$.
 \Cref{lem:basepointfree} and the dimension part in \Cref{cor:hasdimension}
 show that $\pi$ is a dimension-preserving morphism onto $\partial_{\rm alg} \mathcal{R}^{\it ed}_{\bf f}$.
 The degree of this morphism, denoted  $\deg(\pi)$, is the cardinality of the fiber of $\pi$
 over a generic point in the image.
  By \cite[Proposition~5.5]{Mum76}, the degree of the source
  equals the degree of the image times the degree of the map.
Hence, \Cref{lem:degXxY} implies the following result:

\begin{theorem}  \label{thm:betaED}
The degree of the algebraic boundary $\partial_{\rm alg} \mathcal{R}^{\it ed}_{\bf f}$ of the
SPD-exact region is
\begin{equation*} 
  \beta_{ED}(m,n) \quad =\quad \frac{1}{\deg(\pi)} \cdot m\, 2^{m} \binom{n}{m}.
\end{equation*}
 \end{theorem}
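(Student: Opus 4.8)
The plan is to reduce \Cref{thm:betaED} to a computation of $\deg(\pi)$, since \Cref{lem:basepointfree}, \Cref{cor:hasdimension}, and \cite[Proposition~5.5]{Mum76} already give the identity $\beta_{ED}(m,n) \cdot \deg(\pi) = \deg\bigl(V_{\bf f} \times \partial_{\rm alg} {\rm S}^{\it ed}_{\bf f}\bigr) = m\,2^m\binom{n}{m}$ once we know that $\pi$ is a dominant, dimension-preserving morphism onto $\partial_{\rm alg}\mathcal{R}^{\it ed}_{\bf f}$. So the whole content of the theorem is packaged into the statement $\beta_{ED}(m,n) = \tfrac{1}{\deg(\pi)}\cdot m\,2^m\binom{n}{m}$; I would present the proof essentially by assembling these three cited ingredients and checking their hypotheses. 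First I would verify that $\pi$ restricted to the Segre image of $V_{\bf f}\times\partial_{\rm alg}{\rm S}^{\it ed}_{\bf f}$ is a morphism (not merely a rational map): this is exactly \Cref{lem:basepointfree}, which says the base locus $L$ of the linear projection misses the variety. Second, I would note that the image of this morphism is the Zariski closure of $\psi\bigl(V_{\bf f}\times\partial_{\rm alg}{\rm S}^{\it ed}_{\bf f}\bigr)$, which by \Cref{thm:unionspectrahedron} is precisely $\partial_{\rm alg}\mathcal{R}^{\it ed}_{\bf f}$ (the boundary of $\mathcal{R}^{\it ed}_{\bf f}$ is swept out by the boundaries $\partial S_x$ of the individual shadows, which are the images under $\psi$ of $\{x\}\times\partial_{\rm alg}{\rm S}^{\it ed}_{\bf f}$).

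Third, I would confirm that the morphism is dimension-preserving, i.e., that the image has dimension $n-1$, matching the dimension of the source computed in \Cref{cor:hasdimension}. Since $\pi$ is a morphism of projective varieties whose general fiber is finite (this must be argued: a general point $u$ of $\partial_{\rm alg}\mathcal{R}^{\it ed}_{\bf f}$ has finitely many preimages $(x,\lambda)$ because $x$ ranges over the finite or low-dimensional part of $V_{\bf f}$ near which $u$ lies and then $\lambda$ is pinned down by the stationarity equation), the image has the same dimension as the source, namely $n-1$, so $\partial_{\rm alg}\mathcal{R}^{\it ed}_{\bf f}$ is indeed a hypersurface in $\PP^n$. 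With source and image both $(n-1)$-dimensional and the map a morphism between them, \cite[Proposition~5.5]{Mum76} applies verbatim: $\deg(\text{source}) = \deg(\pi)\cdot\deg(\text{image})$. Plugging in the degree $m\,2^m\binom{n}{m}$ from \Cref{cor:hasdimension} for the source and rearranging yields the formula.

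The one genuine obstacle — and the step I would flag as the heart of the matter — is that \Cref{thm:betaED} as stated leaves $\deg(\pi)$ unevaluated, so strictly speaking there is nothing hard left to prove here; the real difficulty, presumably addressed in the text that follows, is to determine $\deg(\pi)$ explicitly (one expects $\deg(\pi)=1$, i.e.\ $\pi$ is birational onto its image, at least generically, so that $\beta_{ED}(m,n) = m\,2^m\binom{n}{m}$ — consistent with the entries $8,24,24,64,\dots$ in \Cref{tab:degreesed}, e.g.\ $\beta_{ED}(2,3)=2\cdot 4\cdot\binom{3}{2}=24$ as in \Cref{ex:twoellipticcurves}). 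Establishing birationality of $\pi$ amounts to showing that a generic point $u$ on the boundary surface has a \emph{unique} preimage $(x,\lambda)$: geometrically, that there is exactly one point $x\in V_{\bf f}$ whose shadow $S_x$ has $u$ on its algebraic boundary with the matching Lagrange multiplier. This should follow from the uniqueness part of \Cref{def:rank1sol2}/\Cref{thm:unionspectrahedron} (the optimal solution of the SDP is unique on the interior, hence generically unique on the boundary), combined with genericity of ${\bf f}$ to rule out the coincidental overlap of two boundary sheets along a component of codimension one. I would isolate that as a separate lemma; everything else in the present proof is bookkeeping with the degree formula and the cited results.
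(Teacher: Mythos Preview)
Your proposal is correct and follows essentially the same route as the paper: invoke \Cref{lem:basepointfree} to see that $\pi$ restricted to $V_{\bf f}\times\partial_{\rm alg}{\rm S}^{\it ed}_{\bf f}$ is a morphism, use \Cref{cor:hasdimension} for the dimension and degree of the source, identify the image with $\partial_{\rm alg}\mathcal{R}^{\it ed}_{\bf f}$, and apply \cite[Proposition~5.5]{Mum76}. Your instinct that $\deg(\pi)=1$ should be provable is well-placed but the paper does \emph{not} establish this---it is recorded immediately after the theorem as \Cref{conj:betaED} (verified only computationally for the entries in \Cref{tab:degreesed}), so your suggested uniqueness argument would go beyond what the paper claims.
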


We conjecture that $\deg(\pi)=1$ whenever our variety $V_{\bf f}$ is not a hypersurface, i.e.,~whenever
 $m \geq 2$. This was verified computationally
in all cases that are reported in  \Cref{tab:degreesed}.

\begin{conjecture} \label{conj:betaED}
  If $m \geq 2$ then the  degree in \Cref{thm:betaED} is $\,\beta_{ED}(m,n) = m\, 2^{m}\binom{n}{m}$.
\end{conjecture}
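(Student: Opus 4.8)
The plan is to reduce \Cref{conj:betaED}, via \Cref{thm:betaED}, to the single assertion that the morphism $\pi$ is birational, i.e.\ $\deg(\pi)=1$, whenever $m\geq 2$. The case $m=n$ is immediate from \Cref{thm:finitevariety0}(b): the source $V_{\bf f}\times\partial_{\rm alg}{\rm S}^{\it ed}_{\bf f}$ is a disjoint union of $2^n$ copies of the determinantal hypersurface $\partial_{\rm alg}{\rm S}^{\it ed}_{\bf f}$, and on each copy $\pi$ is the affine isomorphism $\lambda\mapsto x-\tfrac12{\rm Jac}_{\bf f}(x)\lambda$ onto $\partial_{\rm alg}S_x$; for generic ${\bf f}$ these $2^n$ irreducible hypersurfaces are pairwise distinct, so a generic point of their union has a unique preimage, whence $\deg(\pi)=1$ and $\beta_{ED}(n,n)=n\,2^n$. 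From now on I assume $2\leq m<n$, so $V_{\bf f}$ is an irreducible positive-dimensional complete intersection; then $W:=V_{\bf f}\times\partial_{\rm alg}{\rm S}^{\it ed}_{\bf f}$ is irreducible (a product of irreducible complex varieties), and by \Cref{lem:basepointfree} its image $\partial_{\rm alg}\mathcal{R}^{\it ed}_{\bf f}=\pi(W)$ is an irreducible hypersurface, so $\pi$ is a dominant, generically finite morphism between irreducible varieties of dimension $n-1$.

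Next I would describe the fibres of $\pi$ intrinsically. Given $u$ in the image, a preimage $(x,\lambda)\in W$ satisfies $2(x-u)={\rm Jac}_{\bf f}(x)\lambda$; since ${\rm Jac}_{\bf f}(x)$ has full column rank for generic ${\bf f}$, this determines $\lambda=\lambda(x,u)$ uniquely from $x$ and forces $x$ to be a critical point of the squared distance $g_u$ on $V_{\bf f}$. Because the lower-right block of $\bigC^{\it ed}_u$ is $\id_n$, we have ${\rm H}(\lambda)=\id_n-\sum_i\lambda_iA_i$, so $\det{\rm H}(\lambda(x,u))=0$ is exactly the condition $\lambda(x,u)\in\partial_{\rm alg}{\rm S}^{\it ed}_{\bf f}$. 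Hence $\,(x,\lambda)\mapsto(x,\psi(x,\lambda))\,$ identifies $W$ with
\[
 Z \;:=\; \bigl\{\,(x,u)\in\mathcal{E}\;:\;\det{\rm H}(\lambda(x,u))=0\,\bigr\},
\]
where $\mathcal{E}\subset V_{\bf f}\times\RR^n$ is the Euclidean distance correspondence, and under this identification $\pi$ becomes the projection $Z\to\RR^n$. Consequently $\deg(\pi)$ equals the number of critical points $x$ of $g_u$ for which the Lagrangian Hessian ${\rm H}(\lambda(x,u))$ is singular, counted over $\CC$ for a generic $u\in\partial_{\rm alg}\mathcal{R}^{\it ed}_{\bf f}$.

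To prove $\deg(\pi)=1$ I would first exhibit one point of the image with a reduced fibre of length one. Fix a generic real point $x_0\in V_{\bf f}$ and a generic $\lambda_0$ on the topological boundary of the master spectrahedron ${\rm S}^{\it ed}_{\bf f}$, and put $u_0=\psi(x_0,\lambda_0)$. Reducedness at $(x_0,\lambda_0)$ follows from checking that the restriction of the differential $d\psi$ from \eqref{eq:affpsi} to $T_{(x_0,\lambda_0)}W$ is an isomorphism onto $T_{u_0}\partial_{\rm alg}\mathcal{R}^{\it ed}_{\bf f}$; this is a rank computation for the $n\times(n-1)$ matrix obtained from $x-\sum_i\lambda_i(a_i+A_ix)$ by restricting the $\lambda$-directions to the tangent space of $\partial_{\rm alg}{\rm S}^{\it ed}_{\bf f}$. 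Moreover, the disjointness in \Cref{thm:unionspectrahedron} shows that $(x_0,\lambda_0)$ is the only preimage of $u_0$ lying over the \emph{positive semidefinite} locus of $\partial_{\rm alg}{\rm S}^{\it ed}_{\bf f}$: any other would place $u_0$ in the closure of a second shadow $S_{x'}$, contradicting disjointness for generic $u_0$.

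The remaining step is the genuine difficulty: one must rule out extra preimages coming from the non-semidefinite or complex locus of $\partial_{\rm alg}{\rm S}^{\it ed}_{\bf f}$, i.e.\ show that for generic $u$ no second critical point of $g_u$ acquires a singular (indefinite) Lagrangian Hessian. I expect this to be the main obstacle, which is presumably why \Cref{conj:betaED} remains conjectural. Two natural lines of attack suggest themselves. (a) A monodromy argument: the $2^m\binom{n}{m}$ critical points of $g_u$ are permuted transitively as $u$ varies (the correspondence $\mathcal{E}$ is irreducible), and one would show that the Galois conjugates $B_j$ of the degeneracy hypersurface swept out by a single sheet are pairwise distinct, so that a generic point of $\partial_{\rm alg}\mathcal{R}^{\it ed}_{\bf f}$ lies on exactly one of them. (b) A degeneration of ${\bf f}$, in the spirit of the transversality arguments proving \Cref{thm:beta} and \Cref{thm:betaqp}: choose a special (diagonal or monomial) instance of the quadrics for which the fibre of $\pi$ over a suitable $u$ can be computed by an initial-ideal/Gröbner analysis and seen to be a single reduced point, then spread out by semicontinuity. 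Either route would upgrade the reduced point found above to a generic fibre of cardinality one, yielding $\deg(\pi)=1$ and hence $\beta_{ED}(m,n)=m\,2^m\binom{n}{m}$.
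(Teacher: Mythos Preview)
The paper does not prove \Cref{conj:betaED}; it is stated as a conjecture, supported only by the computational verification recorded in \Cref{tab:degreesed}. There is therefore no proof in the paper to compare your proposal against.

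That said, your reduction is exactly the right one: by \Cref{thm:betaED} the conjecture is equivalent to $\deg(\pi)=1$ for $m\geq 2$, and your treatment of the case $m=n$ via \Cref{thm:finitevariety0}(b) is correct. Your identification of the fibre of $\pi$ over $u$ with the set of critical points $x$ of $g_u$ for which the Lagrangian Hessian is singular is also accurate, and the use of \Cref{thm:unionspectrahedron} to handle the semidefinite sheet is sound. You correctly isolate the genuine obstruction: ruling out additional (complex or indefinite) preimages. Neither of your proposed attacks (monodromy on the ED correspondence, or a degeneration/transversality argument) is carried out, and you acknowledge this. So your proposal is an honest and well-aimed outline, but it does not close the gap, and the paper offers nothing further: \Cref{conj:betaED} remains open.
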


Analogously to \Cref{prop:m2}, the above formula fails in the case $m=1$.

\begin{proposition}
\label{prop:medialaxis}
  If $m=1$ then the SDP-exact region $\mathcal{R}^{\it ed}_{\bf f}$ is dense in~$\RR^n$.
  If ${\bf f}$ is generic,
  then $\deg(\pi)=2$ and the algebraic boundary
  $\,\partial_{\rm alg}\mathcal{R}^{\it ed}_{\bf f}$
  consists of $n$~hyperplanes.
  The topological boundary $\,\partial \mathcal{R}^{\it ed}_{\bf f} = \RR^n \backslash \mathcal{R}^{\it ed}_{\bf f}$
  is contained in at most two of these $n$ hyperplanes:
  \begin{itemize}
    \item If $V_{\bf f}$ is an ellipsoid then $\partial \mathcal{R}^{\it ed}_{\bf f}$ is
 the relative interior of an ellipsoid in a hyperplane.  
     \item Otherwise, $\partial \mathcal{R}^{\it ed}_{\bf f}$ spans two hyperplanes $H_1, H_2$, and $\partial \mathcal{R}^{\it ed}_{\bf f} \cap H_i$ is bounded by a quadric. 
    \item The boundary $\partial \mathcal{R}^{\it ed}_{\bf f}$ coincides with the cut locus of the quadratic hypersurface $V_{\bf f}$.
  \end{itemize}
\end{proposition}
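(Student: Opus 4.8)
For $m=1$ the single constraint is $f_1(x)=x^{T}A_1x+2a_1^{T}x+\alpha_1$, so $V_{\bf f}=\{f_1=0\}$ is a quadric hypersurface (assumed nonempty over $\RR$, so that nearest points exist). Genericity of ${\bf f}$ makes $A_1$ invertible with distinct eigenvalues $\mu_1>\mu_2>\cdots>\mu_n$ and orthonormal eigenvectors $e_1,\dots,e_n$; thus $V_{\bf f}$ is an ellipsoid when $A_1$ is definite and a hyperboloid when $A_1$ is indefinite, and in both cases its center is $x_c=-A_1^{-1}a_1$. The master spectrahedron $\mathrm S^{\it ed}_{\bf f}=\{\lambda:\id_n-\lambda A_1\succ0\}$ is the open interval $(\underline\lambda,\overline\lambda)$ with $\overline\lambda=1/\mu_1$ if $\mu_1>0$ (else $+\infty$) and $\underline\lambda=1/\mu_n$ if $\mu_n<0$ (else $-\infty$): exactly one of the values $1/\mu_k$ is a finite endpoint when $A_1$ is definite, and exactly two when it is indefinite. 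For each $k$ put $H_k:=x_c+e_k^{\perp}=-\tfrac1{\mu_k}a_1+e_k^{\perp}$, the hyperplane through $x_c$ orthogonal to the $k$-th principal axis. In the framework of \Cref{s:5}, $\partial_{\rm alg}\mathrm S^{\it ed}_{\bf f}$ is the zero set of $p(\lambda)=\det(\id_n-\lambda A_1)$, namely the $n$ points $\lambda=1/\mu_k$, and on $V_{\bf f}\times\{1/\mu_k\}$ the map $\Psi$ of \eqref{eqn:projPsi} restricts to $x\mapsto(\id_n-\tfrac1{\mu_k}A_1)x-\tfrac1{\mu_k}a_1$; since $\id_n-\tfrac1{\mu_k}A_1$ is singular with kernel $\RR e_k$ and image $e_k^{\perp}$, this sends $V_{\bf f}$ into $H_k$. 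For generic ${\bf f}$ the $H_k$ are distinct, so $\partial_{\rm alg}\mathcal R^{\it ed}_{\bf f}$, in the sense of \Cref{s:5}, is the union $H_1\cup\cdots\cup H_n$. The fiber of $\pi$ over a generic $u\in H_k$ is $\{x\in V_{\bf f}:(\id_n-\tfrac1{\mu_k}A_1)x=u+\tfrac1{\mu_k}a_1\}$, the intersection of a coset of $\RR e_k$ with the quadric $V_{\bf f}$, hence two points; so $\deg(\pi)=2$, in agreement with $\beta_{ED}(1,n)=\tfrac1{\deg\pi}\,m2^m\binom nm=\tfrac12\cdot2n=n$ from \Cref{thm:betaED} and \Cref{cor:hasdimension}, which is the degree of a union of $n$ hyperplanes.

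Next I bound the region. By \Cref{thm:unionspectrahedron}, $\mathcal R^{\it ed}_{\bf f}=\bigcup_{x\in V_{\bf f}}\{x-\lambda(A_1x+a_1):\lambda\in(\underline\lambda,\overline\lambda)\}$. The key fact is that at any nearest point $x^{*}$ of any $u\in\RR^{n}$ on $V_{\bf f}$, the Lagrange multiplier $\lambda^{*}$ (so that $x^{*}-u=\tfrac{\lambda^{*}}2\nabla f_1(x^{*})$) satisfies $\id_n-\lambda^{*}A_1\succeq0$, i.e.\ $\lambda^{*}\in[\underline\lambda,\overline\lambda]$. Indeed, if $1-\lambda^{*}\mu_j<0$ then either $e_j$ is tangent to $V_{\bf f}$ at $x^{*}$, contradicting the second-order necessary condition $e_j^{T}(\id_n-\lambda^{*}A_1)e_j\ge0$, or $c:=\langle e_j,\nabla f_1(x^{*})\rangle\neq0$, in which case $x^{*}+t_1e_j$ with $t_1=-c/\mu_j$ is the second point of $V_{\bf f}$ on the line $x^{*}+\RR e_j$ and a direct computation yields $\|x^{*}+t_1e_j-u\|^{2}-\|x^{*}-u\|^{2}=\tfrac{c^{2}}{\mu_j^{2}}(1-\lambda^{*}\mu_j)<0$, contradicting minimality of $x^{*}$. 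Consequently, if $\lambda^{*}\in(\underline\lambda,\overline\lambda)$ then $\id_n-\lambda^{*}A_1\succ0$ and $u\in\mathcal R^{\it ed}_{\bf f}$ by \Cref{def:rank1sol2}; otherwise $\lambda^{*}$ is a finite endpoint $1/\mu_k$ and $u=(\id_n-\tfrac1{\mu_k}A_1)x^{*}-\tfrac1{\mu_k}a_1\in H_k$. Hence $\RR^{n}\setminus\mathcal R^{\it ed}_{\bf f}$ lies in the at most two active hyperplanes, so $\mathcal R^{\it ed}_{\bf f}$ is dense; it is also open, since an interior-multiplier critical point makes the Lagrangian $\|x-u\|^{2}-\lambda^{*}f_1(x)$ strictly convex, whence $x^{*}$ is the unique nearest point and the configuration persists under perturbation of $u$. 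Therefore $\partial\mathcal R^{\it ed}_{\bf f}=\RR^{n}\setminus\mathcal R^{\it ed}_{\bf f}$ is contained in at most two of the $n$ hyperplanes.

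It remains to describe $\partial\mathcal R^{\it ed}_{\bf f}$ inside an active hyperplane $H_k$ and to identify it with the cut locus. From the displayed identity for the multiplier, $\|x^{*}+t_1e_k-u\|^{2}=\|x^{*}-u\|^{2}$ exactly when $\lambda^{*}=1/\mu_k$; moreover $x^{*}+t_1e_k$ is the reflection of $x^{*}$ across $H_k$ (the midpoint of the two points on the line lies on $H_k$). Thus $u\notin\mathcal R^{\it ed}_{\bf f}$ iff every nearest point of $u$ has multiplier $1/\mu_k$ for an active $k$, and then $u$ has two nearest points, $x^{*}$ and its $H_k$-reflection, so $u\in H_k\cap F_k(V_{\bf f})$ where $F_k$ is the projection of $V_{\bf f}$ along $e_k$ into $H_k$ (followed by an invertible rescaling of $e_k^{\perp}$). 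The set $F_k(V_{\bf f})$ is the locus of $u\in H_k$ whose fiber line meets $V_{\bf f}$; parametrizing that line, $f_1$ restricts to a univariate quadratic with leading coefficient $\mu_k\neq0$ whose discriminant is a quadratic polynomial in $u$, so $F_k(V_{\bf f})$ is bounded by a quadric in $H_k$. When $A_1$ is definite there is a single active $H_1$, orthogonal to the shortest axis, $F_1(V_{\bf f})$ is the solid $(n{-}1)$-dimensional ellipsoid obtained by projecting $V_{\bf f}$ along $e_1$, and $\partial\mathcal R^{\it ed}_{\bf f}$ equals its relative interior; when $A_1$ is indefinite there are two active hyperplanes $H_1,H_n$, and $\partial\mathcal R^{\it ed}_{\bf f}$ meets each in a full-dimensional set whose relative boundary is the discriminant quadric. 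Finally, $u\in\mathcal R^{\it ed}_{\bf f}$ iff $u$ has a nearest point with positive definite Lagrangian Hessian iff (strict convexity again) $u$ has a unique nearest point which is not a focal point of $V_{\bf f}$ iff the distance function to $V_{\bf f}$ is differentiable at $u$; its complement is by definition the cut locus of $V_{\bf f}$.

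The main obstacle is the reverse inclusion used in the penultimate sentence: showing that $\partial\mathcal R^{\it ed}_{\bf f}$ fills the \emph{entire} relative interior of $F_k(V_{\bf f})$, i.e.\ that at every such $u$ the two $H_k$-symmetric intersection points are globally nearest, so that $u$ lies in the cut locus. I expect this to follow from the reflection symmetry of $V_{\bf f}$ across $H_k$ combined with a connectedness argument — the nearest-point multiplier equals the endpoint $1/\mu_k$ near $x_c$ and varies continuously with $u$, so it stays at that endpoint until $u$ reaches the silhouette — or, alternatively, from standard descriptions of the medial axis of a quadric. Everything else is elementary: the multiplier bound, the two-to-one count for $\pi$, and the hyperplane/quadric structure are immediate once the normal-line picture of \Cref{thm:unionspectrahedron} is in place.
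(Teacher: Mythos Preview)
Your argument follows the same route as the paper's proof: diagonalize $A_1$, identify the master spectrahedron with an interval whose endpoints are reciprocal extreme eigenvalues, observe that at each endpoint $1/\mu_k$ the map $x\mapsto(\id_n-\tfrac1{\mu_k}A_1)x-\tfrac1{\mu_k}a_1$ collapses $V_{\bf f}$ onto the hyperplane $H_k$, count the fiber as the two points where a line parallel to $e_k$ meets the quadric (giving $\deg\pi=2$), and bound $\partial\mathcal R^{\it ed}_{\bf f}\cap H_k$ by the discriminant quadric of that univariate restriction. The paper also invokes the Galois action on the $n$ eigenvalues to explain why the \emph{algebraic} boundary contains all $n$ hyperplanes rather than only the active ones; you implicitly use the same fact.

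Two points of comparison. First, for density the paper simply cites \Cref{prop:m2} (the $l=2$ SDP case, where the Pataki range forces rank one generically), whereas you prove it directly by showing the Lagrange multiplier at a nearest point always satisfies $\id_n-\lambda^{*}A_1\succeq0$ via the second-order necessary condition and your distance computation $\tfrac{c^{2}}{\mu_j^{2}}(1-\lambda^{*}\mu_j)$; this is a nice self-contained argument the paper does not give. Second, the gap you flag in the last paragraph --- that the $H_k$-symmetric pair of intersection points is \emph{globally} nearest throughout the relative interior of $F_k(V_{\bf f})$, so that $\partial\mathcal R^{\it ed}_{\bf f}$ fills that interior --- is not addressed in the paper either: it asserts without further justification that ``$\partial\mathcal R^{\it ed}_{\bf f}\cap H_i$ is given by requiring that both solutions of $f(\phi_i(t))=0$ are real.'' So your proof is at least as complete as the paper's, and more explicit about what remains.
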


\begin{figure}[htb]
 \centering
 \includegraphics[width=240pt]{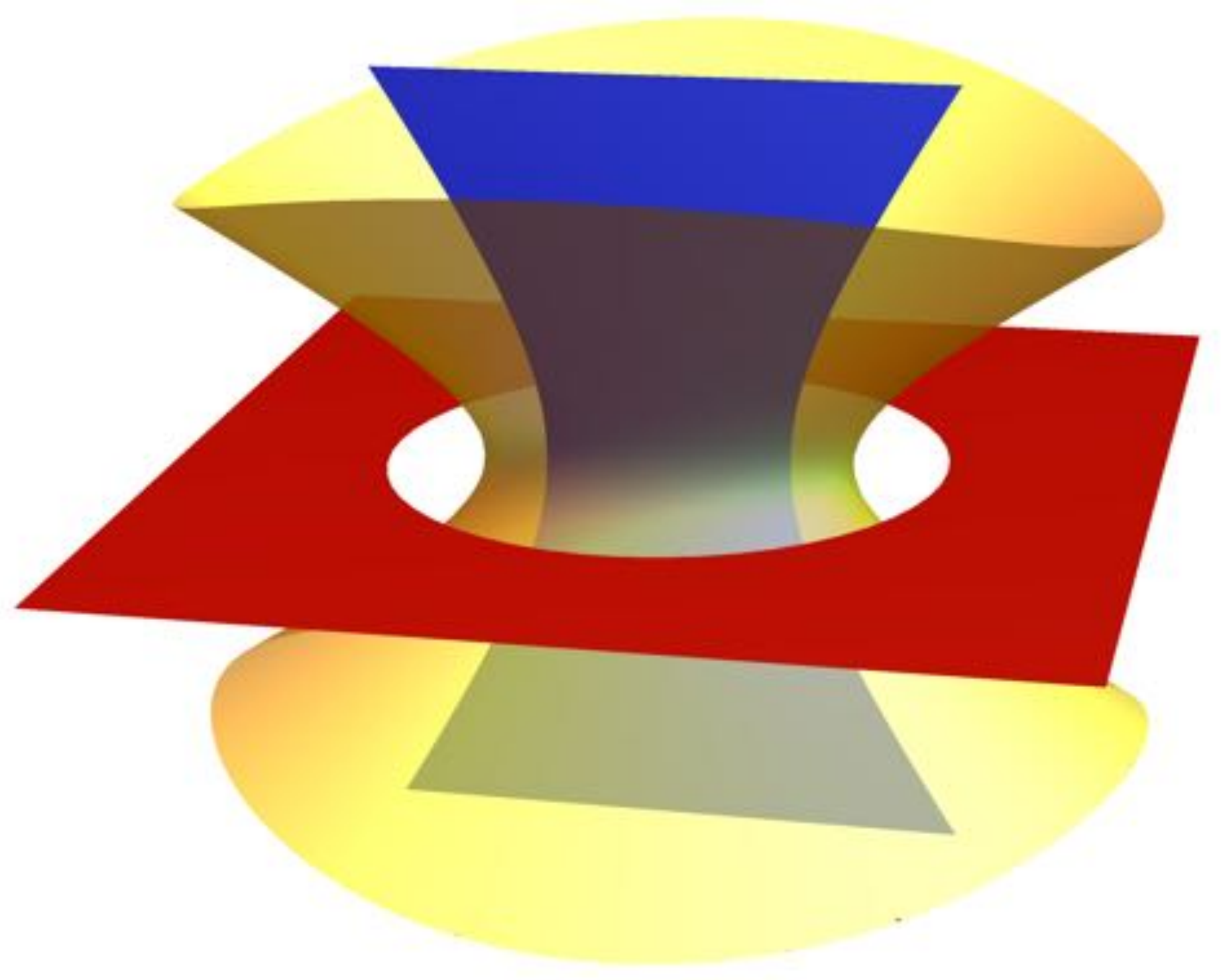}
 \caption{The cut locus of a hyperboloid (yellow) lies in  two planes.
 It is the set shown in red and blue.  The complement of the cut locus is
  the SDP-exact region for the ED~problem.}
 \label{fig:medialaxis}
\end{figure}

The {\em cut locus} of a variety $V$ in $\RR^n$ is defined as the set of all points in $\RR^n$
that have two nearest points on $V$. If $V$ is the boundary of a full-dimensional region in $\RR^n$
then the part of the cut locus that lies inside the region is referred to as the {\em medial axis}.
In Figure~\ref{fig:medialaxis}, the blue region is the medial axis. The red region is in the cut locus but not in the
medial axis.

For the varieties $V_{\bf f}$ in this paper, the cut locus is always
disjoint from the SDP-exact region $\mathcal{R}^{\it ed}_{\bf f}$.
If $m=1$ and ${\bf f}$ is generic then
these two disjoint sets cover $\RR^n$, by Proposition~\ref{prop:medialaxis}.

\begin{proof}  \Cref{prop:m2} implies that that $\mathcal{R}^{\it ed}_{\bf f}$ is dense in $\RR^n$.
We drop indices and set $\,f(x) = x^T A x + 2 a^Tx + \alpha$.
Let $\omega_1<\dots<\omega_n$ be the eigenvalues of~$A$, and let $v_i$ be the corresponding eigenvectors.
We shall assume that $\omega_1<0<\omega_n$.
The master spectrahedron is the interval
$$
  {\rm S}^{\it ed}_{\bf f}
 \, = \,\{ \lambda \in \RR \,:\, I_n - \lambda A \succ 0 \}
 \, =\, (1/\omega_1,1/\omega_m),
$$
and thus $\partial {\rm S}^{\it ed}_{\bf f} = \{1/\omega_1,1/\omega_n\}$.
Let $\lambda_i = 1/\omega_i$
and $\psi_i(x):= \psi(x,\lambda_i) = (I_n-\lambda_i A) x - \lambda_i a$.
The image of $\psi_i$ is the hyperplane $\,H_i \, = \,\{ u \in \RR^n \, :\, v_i^T u + \lambda_i v_i^T a = 0 \}$.
The fiber of $\psi_i$ over a point $u \in H_i$ is a line.
That line has a parametrization $\phi_i:\RR\to\RR^n$, $\,t \mapsto t v_i + b_u$,
where $b_u$ depends linearly on $u$.
Then $f(\phi_i(t))=0$ is a quadratic equation in $t$ with two solutions.
This proves that the morphism $\pi$ restricts to  a 2-to-1 map from $V_{\bf f}$ onto $H_i$, and thus $\deg(\pi)= 2$.
The boundary $\partial\mathcal{R}^{\it ed}_{\bf f}\cap H_i$ is given by requiring that both solutions of $f(\phi_i(t))=0$ are real.
This is the solution set to a quadratic discriminantal inequality for $u \in H_i$.
Thus $\partial \mathcal{R}^{\it ed}_{\bf f} \cap H_i$ is bounded by a quadric for $i\in \{1,n\}$.
Since the Galois group for the $n$ eigenvalues acts transitively,
 the algebraic boundary is
$\,\partial_{\rm alg} \mathcal{R}^{\it ed}_{\bf f}
= \bigcup_{i=1}^n H_i $.
\end{proof}

\begin{remark} \rm
The derivation above leads to a formula for the cut locus of an arbitrary quadratic hypersurface in $\RR^n$.
For the special case of ellipsoids, this was found by  Degen \cite{Deg}.
\end{remark}

We close this section with the analog to \Cref{thm:betaED}
for the problem {\bf (Lin)} where \eqref{eq:qp1} has
linear objective function $g$.
Now the cone $ {\rm S}^{\it lin}_{\bf f}$ on the left of \eqref{eq:masterspec} is the master spectrahedron.
The linear map \eqref{eq:affpsi} gets replaced by
$  \psi\,:\, \RR^n \times \RR^m \to \RR^n,
  (x,\lambda) \,\mapsto \,
    \textstyle\sum_{i=1}^m \lambda_i (a_i + A_i x)$.
In contrast to \eqref{eq:affpsi}, this map is now homogeneous in $\lambda$.
Hence its homogenization equals
\begin{align*} 
  \Psi \,:\,
 \PP^n\times \PP^{m-1}\,\dashrightarrow\, \PP^{n-1}, \quad
 \bigl(\,(x_0:x)\, ,\,  \lambda\, \bigr)\, \,\mapsto \,  \,
   \textstyle\sum_{i=1}^m     \lambda_i (x_0 a_i +  A_i x).
\end{align*}
The map $\Psi$ factors as the Segre embedding $\sigma$ followed by a linear projection $\pi$:
\begin{equation*} 
\PP^n \times \PP^{m-1} \,\overset{\sigma}{\longrightarrow}\,\, \PP^{(n+1)m-1}
\,\overset{\pi}{\dashrightarrow}\,\, \PP^{n-1} .
\end{equation*}
The following result transfers both \Cref{prop:EDdegree}
and \Cref{thm:betaED} to the linear problem.

\begin{theorem}
Let $\,{\bf f}$ be generic and $m \geq 2$. The algebraic degree of {\bf (Lin)} equals
$\,2^m \binom{n-1}{m-1}$.
The degree of the algebraic boundary $\,\partial_{\rm alg} \mathcal{R}^{\it lin}_{\bf f}\,$ of the
SPD-exact region equals
\begin{equation}
\label{eq:degpilin}
  \beta_{\it lin}(m,n) \quad =\quad \frac{1}{\deg(\pi)} \cdot 2^{m} n \binom{n-2}{m-2}.
\end{equation}
 \end{theorem}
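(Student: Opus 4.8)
The plan is to transcribe the chain of lemmas leading to \Cref{thm:betaED}, with the master spectrahedron ${\rm S}^{\it ed}_{\bf f}$ and the affine map in \eqref{eq:affpsi} replaced by their {\bf (Lin)} counterparts ${\rm S}^{\it lin}_{\bf f}$ and $\psi(x,\lambda)=\sum_{i=1}^m\lambda_i(a_i+A_ix)$. First I would record that $\partial_{\rm alg}{\rm S}^{\it lin}_{\bf f}$ is the hypersurface $\{\det(\sum_i\lambda_iA_i)=0\}$ in $\PP^{m-1}$, so it has dimension $m-2$ and degree $n$, and that the homogenization $\Psi:\PP^n\times\PP^{m-1}\dashrightarrow\PP^{n-1}$ of $\psi$ factors as the Segre embedding $\sigma$ followed by a linear projection $\pi$, as displayed just before the theorem. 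Since a base-point-free restriction of a linear projection is a finite, in particular dimension-preserving, morphism, the whole argument is driven by checking base-point-freeness and then invoking \Cref{lem:degXxY} and \cite[Proposition~5.5]{Mum76}.

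For the algebraic degree of {\bf (Lin)}, I would note that a critical point of $u^Tx$ on $V_{\bf f}$ is a pair $(x,\lambda)$ with $x\in V_{\bf f}$ and $u=\tfrac12{\rm Jac}_{\bf f}(x)\lambda$, so this degree equals the cardinality of a generic fiber of $\Psi$ restricted to $V_{\bf f}\times\PP^{m-1}$. That variety has dimension $(n-m)+(m-1)=n-1$, and by \Cref{lem:degXxY} together with $\deg V_{\bf f}=2^m$ its Segre degree is $2^m\binom{n-1}{m-1}$. I would then check base-point-freeness of $\pi$ along it, as in \Cref{lem:basepointfree}: a base point forces $x_0=0$, because if $x_0=1$ the equation $\Psi=0$ reads ${\rm Jac}_{\bf f}(x)\lambda=0$, impossible since the Jacobian has full rank $m$ at the smooth point $x\in V_{\bf f}$, while for $x_0=0$ the linear system $\sum_i\lambda_iA_ix=0$ in $\lambda$ is overdetermined, with no nonzero solution for generic ${\bf f}$. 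Hence $\pi$ restricts to a finite surjective morphism $V_{\bf f}\times\PP^{m-1}\to\PP^{n-1}$ (surjective because source and target have the same dimension $n-1$), and \cite[Proposition~5.5]{Mum76} gives that the generic fiber has size $2^m\binom{n-1}{m-1}$.

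For the boundary degree, I would run the argument of \Cref{thm:betaED} with the variety $V_{\bf f}\times\partial_{\rm alg}{\rm S}^{\it lin}_{\bf f}$ of dimension $(n-m)+(m-2)=n-2$ in place of $V_{\bf f}\times\partial_{\rm alg}{\rm S}^{\it ed}_{\bf f}$; by \Cref{lem:degXxY} it has Segre degree $2^m\cdot n\cdot\binom{(n-m)+(m-2)}{m-2}=2^m n\binom{n-2}{m-2}$. The restriction of $\pi$ to it is again base-point free: the $x_0=1$ case is as above, and for $x_0=0$ one needs in addition that no point of $V_{\bf f}$ at infinity lies in the kernel of a singular pencil matrix $\sum_i\lambda_iA_i$ with $[\lambda]\in\partial_{\rm alg}{\rm S}^{\it lin}_{\bf f}$, which is empty for generic ${\bf f}$ by a dimension count. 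By \Cref{thm:unionspectrahedron} the image $\pi(\sigma(V_{\bf f}\times\partial_{\rm alg}{\rm S}^{\it lin}_{\bf f}))$ is $\partial_{\rm alg}\mathcal{R}^{\it lin}_{\bf f}$, a hypersurface in $\PP^{n-1}$ (it has dimension $n-2$ because the morphism is dimension-preserving; for $m=n$ this is also visible from \Cref{thm:finitevariety0}, and for $m<n$ from \Cref{thm:betaqp}). Applying \cite[Proposition~5.5]{Mum76} once more yields $2^m n\binom{n-2}{m-2}=\beta_{\it lin}(m,n)\cdot\deg(\pi)$, which is \eqref{eq:degpilin}.

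The step I expect to require the most care is the base-point-free and transversality bookkeeping for $\pi$ along the hyperplane at infinity $\{x_0=0\}$, in particular checking that the incidence conditions between the part of $V_{\bf f}$ at infinity and the locus where $\sum_i\lambda_iA_i$ drops rank impose their expected codimension for generic ${\bf f}$; once that genericity is secured, the rest is the mechanical substitution of $\binom{n-1}{m-1}$ and $\binom{n-2}{m-2}$ for $\binom{n}{m}$ in the ED computation.
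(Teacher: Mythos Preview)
Your proposal is correct and follows essentially the same route as the paper. The paper's own proof is terse: it cites \cite[Theorem~2.2]{NR} (with $d_0=1$, $d_1=\cdots=d_m=2$) for the algebraic degree $2^m\binom{n-1}{m-1}$, and for \eqref{eq:degpilin} it simply says the argument ``mirrors the proof of \Cref{thm:betaED}, but with $m$ replaced by $m-1$,'' recording only the analogue of \Cref{cor:hasdimension}, namely that $V_{\bf f}\times\partial_{\rm alg}{\rm S}^{\it lin}_{\bf f}$ has dimension $(n-m)+(m-2)$ and Segre degree $2^m n\binom{n-2}{m-2}$.

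The one place you deviate is in the first claim: instead of invoking \cite{NR}, you compute the algebraic degree directly by applying \Cref{lem:degXxY} to $V_{\bf f}\times\PP^{m-1}$ (Segre degree $2^m\binom{n-1}{m-1}$), checking base-point-freeness of $\pi$ on it as in \Cref{lem:basepointfree}, and reading off the generic fiber size via \cite[Proposition~5.5]{Mum76}. This is a mild but genuine difference: it makes the argument self-contained and pleasantly uniform, since both the algebraic degree and the boundary degree now come from the same Segre-embedding-plus-linear-projection mechanism. The price is the extra base-point and surjectivity check for $V_{\bf f}\times\PP^{m-1}\to\PP^{n-1}$, which you handle correctly (for $x_0=0$ the $n\times m$ system $\sum_i\lambda_iA_ix=0$ has full column rank off a locus of codimension $n-m+1$, which misses the $(n-m-1)$-dimensional part of $V_{\bf f}$ at infinity for generic ${\bf f}$). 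Your flagged ``most care'' step, the dimension count ruling out base points of $\pi$ on $V_{\bf f}\times\partial_{\rm alg}{\rm S}^{\it lin}_{\bf f}$ at infinity, is exactly the kind of genericity appeal the paper makes in \Cref{lem:basepointfree} and does not elaborate further.
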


\begin{proof}
The first statement is \cite[Theorem~2.2]{NR} for $d_0 =1$ and $d_1 = \cdots = d_m = 2$. The proof
of \eqref{eq:degpilin} mirrors the proof of \Cref{thm:betaED}, but with $m$ replaced by $m-1$.
The analogue to \Cref{cor:hasdimension} says that
$\,V_{\bf f} \times \partial_{\rm alg} {\rm S}^{\it lin}_{\bf f} \,$
has dimension $(n-m) + (m-2)$ and degree $2^m n \binom{n-2}{m-2}$.
\end{proof}

Just like in \Cref{conj:betaED},
  we believe that $\deg(\pi) = 1$, so that
$\beta_{\it lin}(m,n) = 2^m n \binom{n-2}{m-2}$.
There are notable differences between {\bf (Lin)} and {\bf (ED)}.
First, it is preferable to assume that $V_{\bf f}$ is compact, so that \eqref{eq:qp1} is always bounded.
Second, the SDP-exact region $\mathcal{R}^{\it lin}_{\bf f}$ is a  cone in $\RR^n$, so its algebraic boundary
$\,\partial_{\rm alg} \mathcal{R}^{\it lin}_{\bf f}\,$ should be thought of as a hypersurface in $\PP^{n-1}$.

\begin{example}[$m=2, n=3$] \rm Consider the curve
shown in the upper right of \Cref{fig:hyperboloids}.
After a projective transformation,
 $V_{\bf f} \subset \RR^3$ is bounded with two connected components.
Its theta body ${\rm TH}_1({\bf f})$ is an intersection of two solid ellipsoids that strictly contains ${\rm conv}(V_{\bf f})$.
The region $\mathcal{R}^{ lin}_{\bf f}$ consists of linear functionals whose minimum is the same for the
two convex bodies. Its algebraic boundary
$\,\partial_{\rm alg} \mathcal{R}^{\it lin}_{\bf f}\,$ is an irreducible curve in $\PP^2$ of
degree $\beta_{\it lin}(2,3) = 12$.
This is analogous to Figure \ref{fig:thetabody},
where $n=2$ and $\partial_{\rm alg} \mathcal{R}^{\it lin}_{\bf f}$ consists of $8$ points on the line $\PP^1$.
\end{example}

\section{Computing Spectrahedral Shadows}\label{s:6}

The previous section focused on the case when ${\bf f}$ is generic.
We here consider the ED~problem for overconstrained systems of quadratic equations.
These are important in many applications (e.g.,~tensor approximation, computer vision).
For a concrete example see \cite[Example~3.7]{DHOST}. These cases do not exhibit the generic
behavior. The degree computed for generic ${\bf f}$ in \Cref{thm:betaED}
serves as an upper bound for the corresponding degree when ${\bf f}$ is special.

In this section we discuss the SDP-exact region for the ED~problem when the constraints can be arbitrary equations of degree two.
We change notation by setting $m=c+p$ and
by considering a variety $V_{\bf f}$ of codimension $c$ in $\RR^{n}$
that is cut out by $c+p$ quadratic polynomials ${\bf f} = (f_1,\ldots,f_{c+p})  $ in ${x} = (x_1,\ldots,x_n)$.
If $p \geq 1$ then $V_{\bf f}$ is not a complete intersection.

Recall from \Cref{thm:unionspectrahedron}  that $\mathcal{R}^{\it ed}_{\bf f}$
is a union of spectrahedral shadows, one for each point $x \in V_{\bf f}$.
Each shadow lies in the $c$-dimensional affine space through $x$ that is normal to $V_{\bf f}$.
Thus $\mathcal{R}_{\bf f}$ is the union over an $(n-c)$-dimensional family of $c$-dimensional spectrahedral shadows.
The algebraic boundary $\partial_{\rm alg} \mathcal{R}^{\it ed}_{\bf f}$ can be written in a similar way.

By \cite[Theorem~1.1]{SS}, the {\em expected degree} of the boundary of each individual shadow is
$$ \delta(p+1,n,*) \,\,= \,\, \sum_r \delta(p+1,n,r), $$
where $r$ runs over the {\em Pataki range} of possible matrix ranks.
A key observation in \cite{SS} is that this only depends on the
codimension $p$ of the projection and not on the  dimension of the  spectrahedral
shadow. Note that the latter dimension is $c$ for regular points ${x}$ on $V_{\bf f}$.

We define the {\em expected degree} of our SDP-exact boundary
$\,\partial_{\rm alg} \mathcal{R}^{\it ed}_{\bf f}\, $ to be the product
\begin{equation}
\label{eq:expected}
 \binom{n-1}{n-c} \cdot \deg(V_{\bf f}) \cdot  \delta( p+1,n,*) .
 \end{equation}
This quantity should be an upper bound for the actual
degree of the hypersurface $\partial_{\rm alg}(\mathcal{R}^{\it ed}_{\bf f})$,
and we think that this bound should be attained in situations that are generic enough.

In what follows we present several explicit examples
of SDP-exact regions where $p \geq 1$.
We use $x = (x_1,\ldots,x_n)$ to denote points on $V_{\bf f}$
and we use $u = (u_1,\ldots,u_n)$ for points on $\partial_{\rm alg} \mathcal{R}^{ed}_{\bf f}$.
Our discussion elucidates formula \eqref{eq:expected}
and connects it to scenarios seen earlier.

\begin{example}[$n=3,c=2,p=0$]  \label{ex:twistedcubic1} \rm
  The equations $f_1= x_2 - x_1^2$ and $f_2= x_3 - x_1 x_2$ from \Cref{ex:twistedcubic0} cut out the twisted cubic curve
  $V_{\bf f}$ in $\RR^3$. The master spectrahedron
${\rm S}^{\it ed}_{\bf f}$ is the parabola $\{ \lambda \in \RR^2 : \lambda_2^2 < 2 \lambda_1 + 1 \}$.
The normal plane at the point $x=(t,t^2,t^3)$ in $V_{\bf f}$ equals
\begin{equation}
\label{eq:normalplane}
  \bigl\{ \,(u_1,u_2,u_3) \in \RR^3 \,\,:\,\,  u_1 + 2 t u_2 + 3 t^2 u_3 = 3 t^5 {+} 2 t^3 {+} t \,\bigr\}.
\end{equation}
Since $c=0$, the image ${x} - \tfrac{1}{2}{\rm Jac}_{\bf f}({x}) \cdot {\rm S}^{\it ed}_{\bf f}$
is a parabola in that plane, defined by the equation
$u_3^2 + 2 u_2 - 2 (t^3 {-} t) u_3 + t^6 {-} 2 t^4 {-} 2 t^2 {-} 1 = 0$.
Together with (\ref{eq:normalplane}) we now have two equations in four unknowns $t,u_1,u_2,u_3$.
By eliminating $t$ from these two polynomials, we~obtain
\begin{tiny} $$  \begin{matrix}
64u_2^6u_3^2+16u_1^3u_2^3u_3+408u_1^2u_2^3u_3^2-64u_1u_2^5u_3-96u_1u_2^3u_3^3+128u_2^7-256u_2^5u_3^2
-56u_2^3u_3^4+u_1^6-30u_1^5u_3-80u_1^4u_2^2+294u_1^4u_3^2 -416u_1^3u_2^2u_3  \\ -880u_1^3u_3^3
+880u_1^2u_2^4-876u_1^2u_2^2u_3^2-588u_1^2u_3^4+32u_1u_2^4u_3+256u_1u_2^2u_3^3-120u_1u_3^5-
576u_2^6+304u_2^4u_3^2+148u_2^2u_3^4-8u_3^6 +1140u_1^4u_2 \\ -1092u_1^3u_2u_3-2544u_1^2u_2^3-
558 u_1^2u_2u_3^2+192u_1u_2^3u_3-408u_1u_2u_3^3+1088u_2^5-138u_2u_3^4-2670u_1^4-600u_1^3u_3+2832
u_1^2u_2^2+207u_1^2u_3^2 +39u_3^4 \\ -96u_1u_2^2u_3   +120u_1u_3^3-1120u_2^4-228u_2^2u_3^2  -1332u_1^2
u_2-108u_1u_2u_3+680u_2^3+144u_2u_3^2+189u_1^2+54u_1u_3-244u_2^2-27u_3^2+48u_2-4.
\end{matrix}
$$ \end{tiny}
$\!\!\!\!$ This irreducible polynomial of degree $8$ defines the SDP-exact boundary $\partial_{\rm alg} \mathcal{R}^{\it ed}_{\bf f}$
around $V_{\bf f}$.
This surface and the curve $V_{\bf f}$
are shown in the left of \Cref{fig:twistedcubic}.
The surface is ruled by the parabolas in the normal bundle of the curve.
This ruling is shown on the right in \Cref{fig:twistedcubic}.
\end{example}

Our next example shows that the SDP-exact region
is \underbar{not} an invariant of   the variety $V_{\bf f}$. It depends on the
choice of defining equations.  We can have $\,V_{\bf f} = V_{\bf f'}\,$ but
 $\,\mathcal{R}^{\it ed}_{\bf f} \not = \mathcal{R}^{\it ed}_{{\bf f}'}$.

\begin{example}[$n=3,c=2,p=1$] \label{ex:321} \rm
We continue \Cref{ex:twistedcubic1} and set
$f_3 = x_1x_3-x_2^2$. Then  ${\bf f}' = (f_1,f_2,f_3)$ defines
the same twisted cubic curve as before. The master spectrahedron
${\rm S}^{\it ed}_{\bf f'}$ lives in $\RR^3$ and has degree $3$, like
the left body in \Cref{fig:somosa}.
Planar projections of such an elliptope have expected degree $\delta(2,3,*) = 6$. Here, the degree
drops to $4$ because ${\rm S}^{\it ed}_{{\bf f}'}$ is degenerate: it is singular at only two points (in $\PP^3$).
The spectrahedral shadow ${x} - \tfrac{1}{2}{\rm Jac}_{{\bf f}'}({x}) \cdot {\rm S}^{\it ed}_{{\bf f}'}$
around ${x} = (t,t^2,t^3)$ is defined by a quartic curve in the normal plane.
The SDP-exact boundary $\partial_{\rm alg} \mathcal{R}^{\it ed}_{{\bf f}'}$ is an irreducible surface of degree~$9$,
with defining polynomial
\begin{tiny} $$ \begin{matrix}
    5832 u_2^3 u_3^6+27648 u_2^6 u_3^2-62208 u_1 u_2^4 u_3^3-2916 u_1^2 u_2^2 u_3^4+15552 u_2^4 u_3^4-5832 u_1^3 u_3^5+8748 u_1^2 u_3^6-5832 u_2^2 u_3^6-4374 u_1 u_3^7+729 u_3^8-41472 u_1^2 u_2^5\\+86400 u_1^3 u_2^3 u_3+27648 u_1 u_2^5 u_3+60750 u_1^4 u_2 u_3^2-41472 u_1^2 u_2^3 u_3^2-62208 u_2^5 u_3^2-106920 u_1^3 u_2 u_3^3+85536 u_1 u_2^3 u_3^3+71442 u_1^2 u_2 u_3^4-19656 u_2^3 u_3^4\\-19440 u_1 u_2 u_3^5+3888 u_2 u_3^6-84375 u_1^6-54000 u_1^4 u_2^2+72576 u_1^2 u_2^4+202500 u_1^5 u_3-19440 u_1^3 u_2^2 u_3-48384 u_1 u_2^4 u_3-220725 u_1^4 u_3^2+6912 u_1^2 u_2^2 u_3^2\\+58032 u_2^4 u_3^2+140454 u_1^3 u_3^3-35424 u_1 u_2^2 u_3^3-54027 u_1^2 u_3^4+8424 u_2^2 u_3^4+11178 u_1 u_3^5-1161 u_3^6+40050 u_1^4 u_2-50760 u_1^2 u_2^3-21132 u_1^3 u_2 u_3\\+33840 u_1 u_2^3 u_3+11880 u_1^2 u_2 u_3^2-28744 u_2^3 u_3^2+3708 u_1 u_2 u_3^3-1314 u_2 u_3^4-7431 u_1^4+17736 u_1^2 u_2^2+6112 u_1^3 u_3-11824 u_1 u_2^2 u_3-3246 u_1^2 u_3^2\\+7976 u_2^2 u_3^2+312 u_1 u_3^3+37 u_3^4-3096 u_1^2 u_2+2064 u_1 u_2 u_3-1176 u_2 u_3^2+216 u_1^2-144 u_1 u_3+72 u_3^2.
\end{matrix}
$$ \end{tiny}
$\!\!\!\!$
The above polynomial is also the defining equation of the cut locus of the twisted cubic curve.
In fact, the SDP-exact region $\,\mathcal{R}^{\it ed}_{\mathbf{f}'}\,$ is dense in $\RR^3$ and only misses the cut locus.
This is similar to the behavior we saw in \Cref{prop:medialaxis} for quadratic hypersurfaces.
\end{example}

\begin{remark} \rm
Quadratic hypersurfaces and the twisted cubic curve share an important geometric property.
They are {\em varieties of minimal degree}.  Blekherman et al.~\cite{BSV} showed that
every non-negative quadratic form on a variety of minimal degree admits a sum-of-squares representation.
The converse holds as well.
This property implies that  $\,\mathcal{R}^{\it ed}_{\mathbf{f}}\,$ is dense in $\RR^n$
whenever ${\bf f}$ spans the full system of all quadrics vanishing on such a variety $V_{\bf f}$ in $\RR^n$.
\end{remark}

Our bundle of spectrahedral shadows is interesting even
for finite varieties ($c=n$). We demonstrate this for
point configurations in $\RR^3$. As we remove
points from the eight points in \Cref{fig:somosas},
the algebraic degree increases for the region around each remaining point.

\begin{figure}[htb]
 \centering
 \includegraphics[width=260pt]{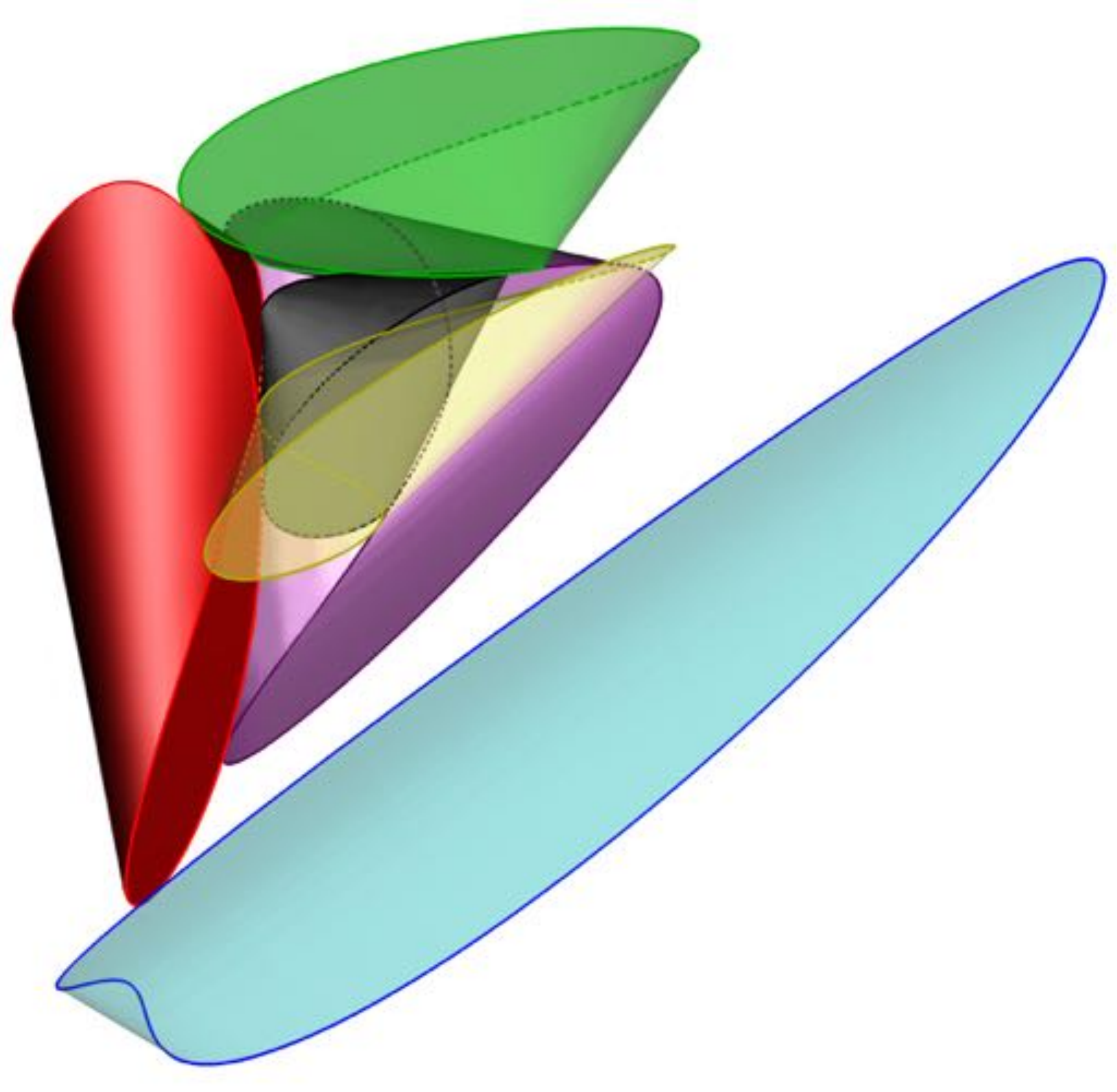}
 \caption{The SDP-exact region for the ED~problem on six points in $\RR^3$
   consists of six spectrahedral shadows. Each shadow is the convex hull of a
   highlighted curve of degree four.}
 \label{fig:tacos}
\end{figure}

\begin{example}[$n{=}3,c{=}3,p{=}1$] \label{ex:331} \rm
Six general points in $\RR^3$ are cut out by four quadrics, e.g.,
\begin{align*}
 {\bf f}  \;=\;&
 (9 x_1 x_3-5 x_2 x_3-x_3^2+x_3,\,
   6 x_2^2-13 x_2 x_3+x_3^2-6 x_2-x_3,\,\\
 &\qquad 2 x_1 x_2-6 x_1 x_3+x_2 x_3+x_3^2-x_3,
   6 x_1^2-5 x_2 x_3-x_3^2-6 x_1+x_3), \\
  V_{\bf f} \;=\;&
  \{\, (0,0,0)\,,\,(0,0,1)\,,\,(0,1,0)\,,\,(1,0,0)\,,\,(-2,-3,-2)\,,\, (-\tfrac{1}{2},-\tfrac{1}{2},-1) \,\}.
\end{align*}
The master spectrahedron ${\rm S}^{\it ed}_{\bf f}$ has degree $n=3$ and it lives in $\RR^4$.
It is the convex hull of its rank-one points, which form a rational curve of degree four.
By \cite[Example~1.3]{SS}, the projections of ${\rm S}^{\it ed}_{\bf f}$ into  $\RR^3$
are spectrahedral shadows of degree $6 = \delta(2,3,*)$,
and each shadow is the convex hull of a curve of degree four.
\Cref{fig:tacos} illustrates the six shadows.
As predicted in \eqref{eq:expected},
the SDP-exact boundary has degree $1 \cdot 6 \cdot 6 = 36 $.
\end{example}

\begin{figure}[htb]
 \centering
 \includegraphics[width=210pt]{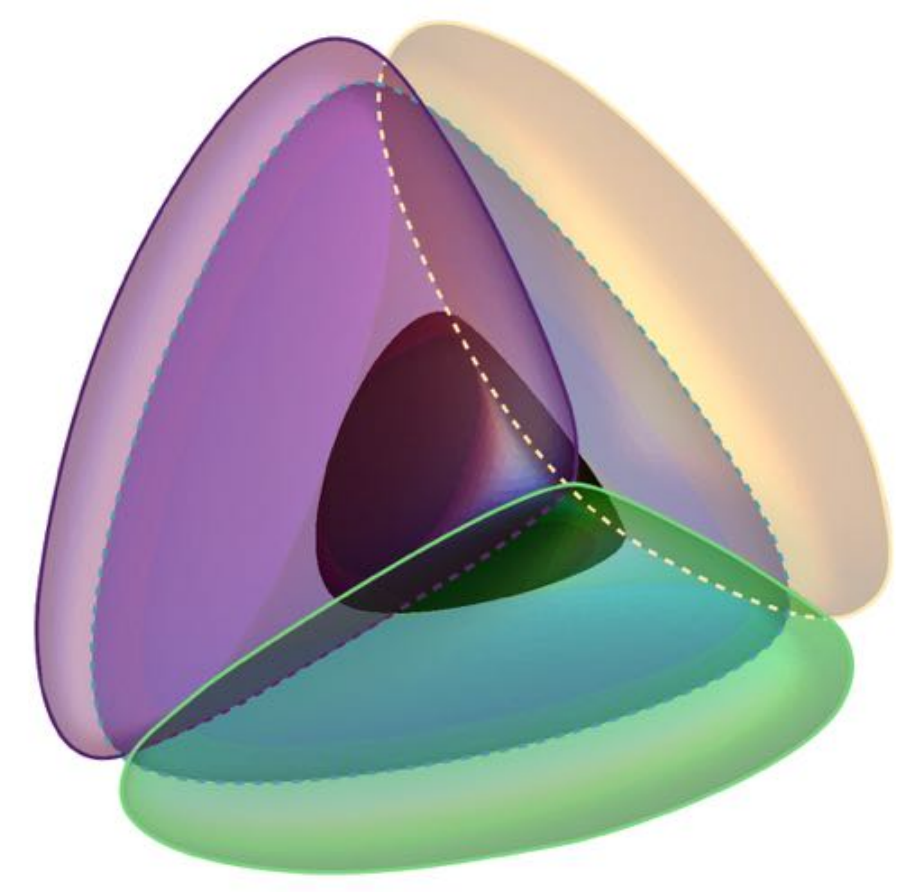}
 \caption{The SDP-exact region $\mathcal{R}^{\it ed}_{\bf f}$
  for five points in $\RR^3$ consists of five dual elliptopes.}
 \label{fig:dualsomosas}
\end{figure}

\begin{example}[$n{=}3,c{=}3,p{=}2$] \label{ex:332} \rm
Five general points in $\RR^3$ are cut out by five quadrics, e.g.,
\begin{align*}
  {\bf f} &\;=\;
  (\, x_2 x_3-x_1,\, x_1 x_3-x_2 x_3+x_1-x_2,\, x_2^2-x_3^2,\, x_1 x_2-x_3,\, x_1^2-x_3^2\,), \\
  V_{\bf f} &\;=\;
  \{ (0,0,0),\, ( 1, 1, 1),\, ( 1,-1,-1),\, (-1, 1,-1),\, (-1,-1, 1) \}.
\end{align*}
 The master spectrahedron ${\rm S}^{\it ed}_{\bf f}$ lives in $\RR^5$. It is an affine hyperplane section
 of the cone of positive semidefinite $3 \times 3$ matrices. Its projections into $\RR^3$
 look like the dual elliptope in \Cref{fig:somosa}. Such a spectrahedral
 shadow has degree $\delta(3,3,*) = 4+4$, as seen in the left box of the $p=2$ row in \cite[Table~1]{SS}.
Its boundary is given by four planes and a quartic surface.

Thus the SDP-exact region $\mathcal{R}^{\it ed}_{\bf f}$ consists of five
dual elliptopes, as seen in \Cref{fig:dualsomosas}.
They touch pairwise along their circular facets.
 For instance, the region around $(0,0,0)$ is bounded by the planes
$\{2 u_1+2 u_2-2 u_3=-3\}$, $\{2 u_1-2 u_2+2 u_3=-3\}$,  $\{2 u_1-2 u_2-2 u_3=3\}$, $\{2 u_1+2 u_2+2 u_3=3\}$,
and the quartic {\em Steiner surface}
$ \{u_1^2 u_2^2+u_1^2 u_3^2+u_2^2 u_3^2+3 u_1 u_2 u_3 = 0\}$.
Again, the prediction in \eqref{eq:expected} is correct, since
the boundary of $\mathcal{R}^{\it ed}_{\bf f}$ has degree $1 \cdot 5 \cdot (4+4) = 40 $.
\end{example}

The algebraic computation of projections of spectrahedra is  very hard
(cf.~\cite[Remark~2.3]{SS}).
In our situation, it is even harder, since
we are dealing with a family of varying projections, one for each point ${x}$ in
the variety $V_{\bf f}$. We demonstrate this in \Cref{alg:EDp1}.

Examples \ref{ex:321} and \ref{ex:331} were computed with \Cref{alg:EDp1} as is.
This works because $V_{\bf f}$ is smooth in both of these cases.
If $V_{\bf f}$ is singular then we must saturate the ideal given in step~\ref{step:manyeqns}
with respect to the ideal of $c \times c$ minors of ${\rm Jac}_{\bf f}({x})$ prior
to the elimination in step~\ref{step:elimination}.
\begin{algorithm}[H]
  \caption{Computing SDP-exact boundaries for the ED~problem (case $p=1$)}
  \label{alg:EDp1}
\setlength{\belowdisplayskip}{8pt}
\setlength{\abovedisplayskip}{8pt}
\begin{algorithmic}[1]
 \Require{Quadratic polynomials $f_1,\ldots,f_{c+1}$ defining $V_{\bf f}$ of codimension $c$ in~$\RR^{n}$.}
\Ensure{Polynomial $\psi({u}) = \psi(u_1,\ldots,u_n)$ that defines
  the algebraic boundary $\partial_{\rm alg} \mathcal{R}^{\it ed}_{\bf f}$. \smallskip }
\State Compute the Jacobian matrix ${\rm Jac}_{\bf f}({x})$ of format $n \times (c{+}1)$.
\State Compute the Lagrangian $\mathcal{L}(\lambda,{x})$ in \eqref{eq:lagrangian} and its
Hessian ${\rm H}(\lambda)$ in \eqref{eq:hessian}.
\State Let $h(\lambda) = {\rm det}({\rm H}(\lambda))$ and compute the
gradient $\nabla_\lambda(h)$, a row vector of length $c+1$.
\State Let ${\bf g}(\lambda,{x})$ be the vector of
all maximal minors of the $(n+1) \times (c+1)$ matrix
{\small $\begin{bmatrix} \nabla_\lambda (h)  \\  {\rm Jac}_{\bf f}({x}) \end{bmatrix}.$}\label{step:augmented}
\State Construct the system of equations in $(c{+}1)+ 2 n$ unknowns $(\lambda,{x},{u})$:
\begin{equation*}
 {\bf f}(x)=0, \quad {\bf g}(\lambda,x)=0, \quad h(\lambda)=0  \quad
 {\rm and} \quad {u} \, = \,{x} \,-\, \tfrac{1}{2}\,{\rm Jac}_{\bf f}({x}) \lambda.
\end{equation*}\Comment{This is expected to cut out a variety of dimension $n-1$ in $\RR^{c + 2 n + 1}$.}\label{step:manyeqns}
\State Eliminate $\lambda$ and ${x}$ from the above system
to get the desired polynomial $\psi({u})$.\label{step:elimination}
\end{algorithmic}
\end{algorithm}
\Cref{alg:EDp1} can be modified to also work when $p \geq 2$
but the details are subtle. The polynomial $h(\lambda)$ gets replaced
by the ideal of $(c{+}2{-}p) \times (c{+}2{-}p)$ minors of the matrix ${\rm H}(\lambda)$,
and the first row $\nabla_\lambda(h)$ in the
augmented Jacobian in step~\ref{step:augmented} gets replaced by the Jacobian matrix
of that determinantal ideal. This requires great care since these matrices are large.

\begin{remark} \rm
It would be interesting to study the tangency
behavior of the spectrahedral shadows in our bundles.
For instance, pairs of convex bodies meet in a point in Figure~\ref{fig:somosas},
they meet in a line segment in Figure~\ref{fig:tacos}, and they meet in a common circular facet in
Figure~\ref{fig:dualsomosas}.
\end{remark}

\noindent
{\bf Acknowledgements.}
We thank Thomas Endler, Laureano Gonz\'alez-Vega, and Kristian Ranestad for their help with this project.
Bernd Sturmfels was partially supported by the Einstein Foundation Berlin and the  US National Science Foundation.
Diego Cifuentes and Corey Harris were at the MPI-MiS Leipzig during the development of this work.

\bigskip

\begin{small}

\end{small}

\vfill
\smallskip
\bigskip
\noindent
\footnotesize {\bf Authors' addresses:}

\smallskip

\noindent Diego Cifuentes,
Massachusetts Institute of Technology
\hfill {\tt diegcif@mit.edu}

\noindent
Corey Harris, University of Oslo
\hfill {\tt coreyh@math.uio.no}

\noindent Bernd Sturmfels,
 \  MPI-MiS Leipzig and
UC  Berkeley \hfill  {\tt bernd@mis.mpg.de}
\end{document}